\edef\savecatcodeat{\the\catcode`@} \catcode`\@=11
\def\tb@ifSpecChars#1#2{#1}
\def\tb@ifNoSpecChars#1#2{#2}
\def\tableau{%
  \bgroup
  \@ifstar{\let\Tif\tb@ifNoSpecChars\tb@tableauB}
          {\let\Tif\tb@ifSpecChars\tb@tableauB}}
\def\tb@tableauB{
  \@ifnextchar[{\tb@tableauC}{\tb@tableauC[]}}
\def\tb@tableauC[#1]{\hbox\bgroup%
    \let\\=\cr
    \def\bl{\global\let\tbcellF\tb@cellNF}%
    \def\tf{\global\let\tbcellF\tb@cellH}
%
    \dimen2=\ht\strutbox \advance\dimen2 by\dp\strutbox%
    \ifx\baselinestretch\undefined\relax%
    \else%
       \dimen0=100sp \dimen0=\baselinestretch\dimen0%
       \dimen2=100\dimen2 \divide\dimen2 by\dimen0%
    \fi%
    \let\tpos\tb@vcenter
    \tb@initYoung
    \tb@options#1\eoo
    \let\arrow\tb@arrow%
    \dimen0=\Tscale\dimen2%
    \dimen1=\dimen0 \advance\dimen1 by \tb@fframe%
    \lineskip=0pt\baselineskip=0pt
%
    \def\tb@nothing{}%
    \def\endcellno{$\rss\egroup\bss\egroup}
    \def\endcell{\endcellno\kern-\dimen0}
    \def\begincell{\vbox to\dimen0\bgroup\vss\hbox to\dimen0\bgroup\hss$}%
    \let\overlay\tb@overlay%
    \let\fl\tb@fl%
    \let\lss\hss\let\rss\hss\let\tss\vss\let\bss\vss
    \def\mkcell##1{
        \let\tbcellF\tb@cellD
        \def\tb@cellarg{##1}
        \ifx\tb@cellarg\tb@nothing\let\tb@cellarg\tb@cellE\fi%
        \begincell\tb@cellarg\endcellno
        \tbcellF}
    \let\savecellF\tbcellF
     \Tif{\catcode`,=4\catcode`|=\active}{}\tb@tableauD}%
\let\tb@savetableauD\tableauD
\gdef\tableauD#1{%
  \Tif{
    \mathcode`|="8000 \mathcode`*="8000%
    \mathcode`~="8000 \mathcode`@="8000%
    \def@{\bullet}%
    \let|\cr
    \let*\tf
    \let~\sk
  }{}%
  \tpos{\tabskip=0pt\halign{&\mkcell{##}\cr#1\crcr}}%
  \global\let\tbcellF\savecellF
  \egroup
  \egroup}
\let\tb@tableauD\tableauD
\let\tableauD\tb@savetableauD
\let\tb@savetableauD\undefined
\def\tb@options#1{\ifx#1\eoo\relax\else\tb@option#1\expandafter\tb@options\fi}
\def\tb@option#1{%
  \if#1t\let\tpos\tb@vtop\fi
  \if#1c\let\tpos\tb@vcenter\fi
  \if#1b\let\tpos\vbox\fi
  \if#1F\tb@initFerrers\fi
  \if#1Y\tb@initYoung\fi
  \if#1s\tb@initSmall\fi
  \if#1m\tb@initMedium\fi
  \if#1l\tb@initLarge\fi
  \if#1p\tb@initPartition\fi
  \if#1a\tb@initArrow\fi
}
\def\tb@vcenter#1{\ifmmode\vcenter{#1}\else$\vcenter{#1}$\fi}
\def\tb@vtop#1{\hbox{\raise\ht\strutbox\hbox{\lower\dimen0\vtop{#1}}}}
\def\tb@initPartition{\def\Tscale{.3}}
\def\tb@initSmall{\def\Tscale{1}}
\def\tb@initMedium{\def\Tscale{2}}
\def\tb@initLarge{\def\Tscale{3}}
\def\tb@initArrow{\dimen2=1.25em}
\def\tb@initYoung{%
  \def\tb@cellE{}
  \let\tb@cellD\tb@cellN
  \def\sk{\global\let\tbcellF\tb@cellNF}}
\def\tb@initFerrers{%
  \def\tb@cellE{\bullet}
  \let\tb@cellD\tb@cellNF
  \def\sk{\bullet}}
\def\tb@sframe#1{%
  \vbox to0pt{
    \vss
    \hbox to0pt{%
      \hss
      \vbox to\dimen1{
        \hrule depth #1 height0pt
        \vss
        \hbox to\dimen1{
          \vrule width #1 height\dimen1
          \hss
          \vrule width #1
          }%
        \vss
        \hrule height #1 depth 0in
        }%
      \kern-\tb@hframe
      }%
    \kern-\tb@hframe}}
\def\tb@hframe{.2pt}\def\tb@fframe{.4pt}\def\tb@bframe{1.2pt}
\def\tb@cellH{\tb@sframe{\tb@bframe}}       
\def\tb@cellNF{}                            
\def\tb@cellN{\tb@sframe{\tb@fframe}}       
\let\tbcellF\tb@cellN                       
\def\tb@rpad{1pt}
\def\tb@lpad{1pt}
\def\tb@tpad{1.8pt}
\def\tb@bpad{1.8pt}
\def\tb@overlay{\endcell\@ifnextchar[{\tb@overlaya}{\begincell}}
\def\tb@overlaya[#1]{\vbox to\dimen0\bgroup%
  \tb@overlayoptions#1\eoo%
  \tss\hbox to\dimen0\bgroup\lss}
\def\tb@overlayoptions#1{\ifx#1\eoo\relax\else\tb@overlayoption#1\expandafter\tb@overlayoptions\fi}
\def\tb@overlayoption#1{
  \if#1t\def\tss{\vskip\tb@tpad}\let\bss\vss\fi
  \if#1c\let\tss\vss\let\bss\vss\fi
  \if#1b\def\bss{\vskip\tb@bpad}\let\tss\vss\fi
  \if#1l\def\lss{\hskip\tb@lpad}\let\rss\hss\fi
  \if#1m\let\lss\hss\let\rss\hss\fi
  \if#1r\def\rss{\hskip\tb@rpad}\let\lss\hss\fi
}
\def\tb@fl{\endcell\begincell\vrule depth 0pt width \dimen0 height \dimen0 \endcell\begincell}
\def\tb@arrowpad{.5}
\newoptcommand{\tb@arrow}{\@ne}[2]{%
  \endcell
   \begingroup%
   \let\dg@getnodesize\tb@getnodesize
   \dg@USERSIZE=#1\relax%
   \ifnum\dg@USERSIZE<\@ne \dg@USERSIZE=\@ne \fi%
   \dg@parse{#2}%
   \dg@label{\tb@draw{#1}{#2}}}
\def\tb@getnodesize#1#2#3#4#5{\dimen3=\tb@arrowpad\dimen2 #4=\dimen3 #5=\dimen3\relax}
\def\tb@getnodesize#1#2#3#4#5{\ifnum#2=0\ifnum#3=0\tb@getnodesizetail{#4}{#5}\else\tb@getnodesizehead{#4}{#5}\fi\else\tb@getnodesizehead{#4}{#5}\fi}
\def\tb@getnodesizetail#1#2{\dimen3=.5\dimen2 #1=\dimen3 #2=\dimen3}
\def\tb@getnodesizehead#1#2{\dimen3=.5\dimen2 #1=\dimen3 #2=\dimen3}
\def\tb@draw#1#2#3#4{%
        \dg@X=0\dg@Y=0\dg@XGRID=1\dg@YGRID=1\unitlength=.001\dimen0%
        \dg@LBLOFF=\dgLABELOFFSET \divide\dg@LBLOFF\unitlength%
        \dg@drawcalc
        \begincell
        \let\lams@arrow\tb@lams@arrow
        \begin{picture}(0,0)\begingroup\dg@draw{#1}{#2}{#3}{#4}\end{picture}%
        \endcell
        \endgroup
        \begincell}
\def\tb@lams@arrow#1#2{%
 \lams@firstx\z@\lams@firsty\z@
 \lams@lastx#1\relax\lams@lasty#2\relax
 \lams@center\z@
 %
 \N@false\E@false\H@false\V@false
 \ifdim\lams@lastx>\z@\E@true\fi
 \ifdim\lams@lastx=\z@\V@true\fi
 \ifdim\lams@lasty>\z@\N@true\fi
 \ifdim\lams@lasty=\z@\H@true\fi
 \NESW@false
 \ifN@\ifE@\NESW@true\fi\else\ifE@\else\NESW@true\fi\fi
 %
 \ifH@\else\ifV@\else
  \lams@slope
  \ifnum\lams@tani>\lams@tanii
   \lams@ht\ten@\p@\lams@wd\ten@\p@
   \multiply\lams@wd\lams@tanii\divide\lams@wd\lams@tani
  \else
   \lams@wd\ten@\p@\lams@ht\ten@\p@
   \divide\lams@ht\lams@tanii\multiply\lams@ht\lams@tani
  \fi
 \fi\fi
 %
 \ifH@  \lams@harrow
 \else\ifV@ \lams@varrow
 \else \lams@darrow
 \fi\fi
}
\let\savecatcodeat\undefined
\numberwithin{equation}{section}
\renewcommand{\subsubsection}{\@startsection
{subsubsection} {3} {0mm} {\baselineskip} {-0.5\baselineskip} {\normalfont\normalsize\bfseries}} \makeatother
\newtheorem{theorem}{Theorem}
\newtheorem{lemma}[theorem]{Lemma}
\newtheorem{proposition}[theorem]{Proposition}
\newtheorem{example}[theorem]{Example}
\newtheorem{conjecture}[theorem]{Conjecture}
\newtheorem{definition}[theorem]{Definition}
\newtheorem{remark}[theorem]{Remark}
\def\charge{ {\rm {ch}}}
\def\cocharge{ {\rm {coch}}}
\def\la{{\lambda}}
\def\cal L{{\mathcal L}}
\def\nup {n^\uparrow}
\def\ndown {n^\downarrow}
\def\npup {\bar n^\uparrow}
\def\npdown {\bar n^\downarrow}
\def\nplusup {n_+^\uparrow}
\def\nplusdown {n_+^\downarrow}
\def\npluspup {\bar n_+^\uparrow}
\def\npluspdown {\bar n_+^\downarrow}
\newcommand{\kshapeposet}{poset of $k$-shapes}
\newcommand{\bdy}{\partial}
\newcommand{\bp}{\mathbf{p}}
\newcommand{\bq}{\mathbf{q}}
\newcommand{\ch}{\mathrm{ch}}
\newcommand{\change}{\Delta}
\newcommand{\col}{\mathrm{col}}
\newcommand{\Core}{\mathcal{C}}
\newcommand{\cs}{\mathrm{cs}}
\newcommand{\diag}{d}
\newcommand{\Int}{\mathrm{Int}}
\newcommand{\Ksh}{\Pi}
\newcommand{\Path}{\mathcal{P}}
\newcommand{\Patheq}{\overline{\Path}}
\newcommand{\row}{\mathrm{row}}
\newcommand{\rs}{\mathrm{rs}}
\newcommand{\tM}{\tilde{M}}
\newcommand{\tm}{\tilde{m}}
\newcommand{\Z}{\mathbb{Z}}
\def \part {\vdash}
\begin{document}

\title[Charge on tableaux and the poset of $k$-shapes]
{Charge on tableaux and the poset of $k$-shapes}

\author{Luc Lapointe }
\thanks{Research partially supported by FONDECYT (Fondo Nacional de Desarrollo Cient\'{\i}fico y
Tecnol\'ogico de Chile) grant \#1090016, by CONICYT (Comisi\'on Nacional de Investigaci\'on Cient\'ifica y Tecnol\'ogica de Chile) via
the ``proyecto anillo ACT56'', and by NSF grant DMS-0652641.}
\address{Instituto de Matem\'atica y F\'{\i}sica, Universidad de Talca,
Casilla 747, Talca, Chile} \email{lapointe-at-inst-mat.utalca.cl}

\author{Mar\'{\i}a Elena Pinto }
\address{Instituto de Matem\'atica y F\'{\i}sica, Universidad de Talca, Casilla 747, Talca,
Chile} \email{mepinto-at-inst-mat.utalca.cl}


\begin{abstract}   A poset on a certain class of partitions known as $k$-shapes was introduced in \cite{LLMS2} to provide a combinatorial rule for the expansion of
a  $k-1$-Schur functions into $k$-Schur functions at $t=1$.  The main ingredient in this construction was a bijection, which we call the weak bijection, that associates to a $k$-tableau a pair made out of a $k-1$-tableau and a path in the poset of $k$-shapes. We define here a concept of charge on $k$-tableaux (which conjecturally gives a combinatorial interpretation for the expansion coefficients of Hall-Littlewood polynomials into $k$-Schur functions),
and show that it is compatible in the standard case with the weak bijection.
In particular, we obtain that the usual charge of a standard tableau of size $n$
is equal to the sum of the charges of its corresponding paths in the poset of $k$-shapes, for
$k=2,3,\dots,n$.
\end{abstract}

\keywords{Symmetric functions, $k$-Schur functions, tableaux, charge}

\maketitle

\section{Introduction}

For each integer $k \geq 1$, a family of symmetric functions
$s_\mu^{(k)}(x;t)$ (now called $k$-Schur functions)
were introduced
in \cite{LLM} in connection with
Macdonald polynomials.
To be more precise, computer evidence suggested, for each positive integer $k$,
the existence of
a family of symmetric polynomials defined by certain sets of
tableaux $\mathcal A^{(k)}_\mu$ as:
\begin{equation} \label{eqatom}
s_{\mu}^{(k)}(x;t) = \sum_{T\in \mathcal A^{(k)}_\mu}
t^{{\rm charge}(T)} \, s_{{\rm shape}(T)}(x)
\end{equation}
with the property that
any (plethystically modified) Macdonald polynomial $H_{\lambda}(x;q,t)$
indexed by a partition $\lambda$ whose first part is not larger than $k$ (a $k$-bounded partition),
can be decomposed as:
\begin{equation} \label{eqMac}
H_{\lambda}(x;q,t) = \sum_{\mu;\mu_1\leq k}
K_{\mu \lambda}^{(k)}(q,t) \, s_{\mu}^{(k)}(x;t) \, , \qquad
K_{\mu \lambda}^{(k)}(q,t) \in \mathbb N[q,t] \, .
\end{equation}
Moreover, given that in this setting $s_\mu^{(k)}(x;t)=s_\mu(x)$
for large $k$, the coefficients $K_{\mu \lambda}^{(k)}(q,t)$ reduce to the usual
$q,t$-Kostka coefficients  when $k$ is large enough \cite{H,M}.
 The study of the
$s_\lambda^{(k)}(x;t)$ led to several
conjecturally equivalent
characterizations \cite{LLM,LM:filtra,LLMS}, but for the purpose
of this article only the characterization of \cite{LLMS} will be relevant.
Note that from now on,
we will always index $k$-Schur functions by $k+1$-cores rather
than by $k$-bounded partitions (see for instance
\cite{LM:cores} for the connection between the two concepts).

It was shown that the $k$-Schur functions at $t=1$ (in the characterization \cite{LLMS})
provide the natural basis to work in the quantum cohomology of the
Grassmannian just as the Schur functions do for the usual cohomology
\cite{LM:QC}.
Another key development was T.~Lam's
proof \cite{Lam:kSchur} that the
Schubert basis of the homology of the affine Grassmannian is given
by the $k$-Schur functions, and that the Schubert basis of the
cohomology of the affine Grassmannian is given
by functions dual to the $k$-Schur functions, called dual $k$-Schur functions or
affine Schur functions \cite{Lam:affstan,LM:QC}.

Many combinatorial conjectures about $k$-Schur functions  have been
formulated, but the conjecture especially relevant to this article is
that the $k$-Schur functions expand positively into $k'$-Schur
functions for $k'>k$ \cite{LLM}:
\begin{equation}\label{surepos0}
s_\lambda^{(k)}(x;t) = \sum_{\mu}
b_{\lambda,\mu}^{(k \to k')}(t)\, s_\mu^{(k')}(x;t)\,,\qquad\text{for
$b_{\lambda,\mu}^{(k \to k')}(t)\in\Z_{\ge0}[t]$.}
\end{equation}
When $k'$ is large enough, this conjecture says that $k$-Schur functions are Schur positive (partial results in this direction have been obtained in \cite{AB}).  The conjecture also states in particular
that $(k-1)$-Schur functions expand positively
into $k$-Schur functions.
We refer to the $b_{\lambda,\mu}^{(k-1,k)}(t)$
coefficients
as the
$k$-branching coefficients. We have defined in \cite{LLMS2}
a poset called the poset of
$k$-shapes, whose maximal elements are $(k+1)$-cores and whose
minimal elements are $k$-cores, and have a conjecture for the
$k$-branching coefficients as enumerating maximal chains in the
poset of $k$-shapes modulo an equivalence (see Section~\ref{missing}).
Using the duality between the ungraded $k$-Schur and dual $k$-Schur functions
\cite{LLMS}, this conjecture has been shown to be valid
when $t=1$ by proving that dual $k'$-Schur functions expand positively into dual $k$-Schur functions for $k'>k$  \cite{LLMS2}.

One of the obstructions to the generalization of the
the results of \cite{LLMS2} to a generic value
of $t$ was the lack of an explicit definition for the
graded version of the dual
$k$-Schur functions\footnote{Another version of the graded dual
$k$-Schur functions can be obtained from the $k$-Schur functions
by duality with respect to the Hall-Littlewood scalar product
(see \cite{LLMSSZ}).  This version is however not monomial positive.}.
  In this article we introduce
a charge on $k$-tableaux that provides such a definition.
To be more precise,
the dual $k$-Schur functions
are the generating function of certain combinatorial objects
called $k$-tableaux.
We define the graded version of the dual $k$-Schur functions
as\footnote{A different definition of graded dual $k$-Schur functions has been proposed in \cite{DM}.}
\begin{equation}
{\mathfrak S}_{\lambda}^{(k)}(x;t) = \sum_{Q} t^{{\charge} (Q)} x^Q
\end{equation}
where the sum is over all $k$-tableaux $Q$ of shape $\lambda$
(where for simplicity the dual $k$-Schur function is indexed by a $k+1$-core),
and where $\charge (Q)$ is a certain generalization (see Section~\ref{Schargektab})
of the charge
of a tableau \cite{LScharge}.   We conjecture that the charge also provides
the $k$-Schur expansion of a Hall-Littlewood polynomial indexed by a $k$-bounded partition
\begin{equation} \label{HLkschur0}
H_{\lambda}(x;t) = \sum_{Q^{(k)}} t^{{\rm ch}(Q^{(k)})} s_{{{\rm sh} (Q^{(k)})}}^{(k)}(x;t)
\end{equation}
where the sum is over all $k$-tableaux of weight $\lambda$, and where
${\rm sh}(Q^{(k)})$ is the shape of the $k$-tableau $Q^{(k)}$.
This formula,  which would prove the case $q=0$ of \eqref{eqMac},
generalizes a well-known result of
 Lascoux and Sch\"utzenberger providing
a $t$-statistic on tableaux for the Kostka-Foulkes polynomials \cite{LScharge}.

 We give evidence of the validity of the
definition of charge by
proving  that the concept of charge on $k$-tableaux
is compatible in the standard case (the
complications pertaining to the extension to the non-standard
case are discussed in the conclusion)
with the weak bijection introduced in \cite{LLMS2}.

\begin{theorem} \label{theo} The weak bijection in the standard case
\begin{equation}
\begin{split}
{\rm SWTab}_{\lambda}^k  & \longrightarrow  \bigsqcup_{\mu \in {\mathcal C}^k}
{\rm SWTab}_{\mu}^{k-1}
\times \overline{\mathcal P}^k(\lambda,\mu) \\
Q^{(k)}  & \longmapsto    (Q^{(k-1)},[\bp])
\end{split}
\end{equation}
where ${\rm SWTab}_{\lambda}^k$ is the set of standard $k$-tableau (or standard
weak tableau) of shape $\lambda$, and where
$[\bp]$ is a certain equivalence class of paths in the poset of $k$-shapes,
is such that
\begin{equation}
\charge(Q^{(k)} )= \charge(Q^{(k-1)}) + \charge(\bp)
\end{equation}
with $\charge(\bp)$ the charge of the path $\bp$ (see Definition~\ref{D:charge}).
\end{theorem}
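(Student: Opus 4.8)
The plan is to induct on the size $n$ (the number of distinct letters), with $k$ fixed; for $n\le 1$ all three statistics vanish, which gives the base case. The inductive step rests on the fact that the weak bijection of \cite{LLMS2} is governed by the removal of the largest letter and that all three charge statistics admit a compatible ``remove the top letter'' recursion.

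\emph{Step 1 (recursion of the weak bijection).} Let $Q^{(k)}_{<n}$ be the standard $k$-tableau obtained from $Q^{(k)}$ by deleting the cells labelled $n$ (a weak strip). I would first show that, under the weak bijection, $Q^{(k)}_{<n}\mapsto(R,[\bq])$, where $R$ is $Q^{(k-1)}$ with its cells labelled $n$ deleted and $\bq$ is a suitable representative of $\bp$ with a final batch of covering steps removed, namely the steps created when the strip of $n$'s is reinserted. This amounts to checking that the elementary operations $\push$, $\pull$ and $\genpush$ underlying the bijection commute with truncation at the top letter, and to pinning down which covering moves of the \kshapeposet{} are produced by reinserting $n$. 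One also verifies here that $\charge$ is constant on path-equivalence classes, so that $\charge([\bp])$ is well defined, and that $\charge(\bp)=\charge(\bq)+c$ with $c$ the contribution to the path charge of Definition~\ref{D:charge} coming from that final batch of steps.

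\emph{Step 2 (charge recursions and reduction).} By the definition of charge on $k$-tableaux in Section~\ref{Schargektab}, deleting the top strip changes the charge by an explicit ``index of the last letter'' term: $\charge(Q^{(k)})=\charge(Q^{(k)}_{<n})+d_k(Q^{(k)})$ and $\charge(Q^{(k-1)})=\charge(R)+d_{k-1}(Q^{(k-1)})$; for $k=1$ this is the classical recursion for the charge of a standard tableau. Applying the induction hypothesis to $Q^{(k)}_{<n}\mapsto(R,[\bq])$ gives $\charge(Q^{(k)}_{<n})=\charge(R)+\charge(\bq)$, so the theorem reduces to the single-letter identity
\[
d_k\!\left(Q^{(k)}\right)\;=\;d_{k-1}\!\left(Q^{(k-1)}\right)+c .
\]
I would prove this by a case analysis on the covering relations traversed by the final batch of steps — the row moves $\changerow$ and the column moves $\changecol$ of the \kshapeposet{} — determining for each move where the cells carrying $n$ land in $Q^{(k-1)}$ relative to those carrying $n-1$, and summing the prescribed per-move contributions.

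The hard part is Step 2 together with the precise form of Step 1: reconciling three \emph{a priori} unrelated statistics — the index term of the $k$-tableau charge, the index term of the $(k-1)$-tableau charge, and the step-by-step charge along a maximal chain of $k$-shapes — at the level of a single letter, while simultaneously controlling how the weak bijection reshapes the top strip and which intermediate $k$-shapes it visits. This is where the standardness hypothesis is essential for keeping the bookkeeping tractable, and the obstructions to extending the argument to arbitrary weight (where a letter may occupy several cells of equal residue) are precisely those discussed in the conclusion.
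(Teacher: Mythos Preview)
Your high-level skeleton --- induct on the number of letters, peel off the top letter, and reduce to a single-letter identity --- matches the paper's. The recursion of the weak bijection you describe in Step~1 is exactly diagram~\eqref{ktabdiag}: if $Q^{(k)}\mapsto(Q^{(k-1)},[\bp_n])$ then $Q^{(k)}_{<n}\mapsto(Q^{(k-1)}_{<n},[\bp_{n-1}])$, and the reduction to a per-letter identity is correct.

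The gap is in how you propose to prove that identity. You phrase it as a direct comparison of $d_k(Q^{(k)})$ with $d_{k-1}(Q^{(k-1)})$ plus the path-charge increment $c=\charge(\bp_n)-\charge(\bp_{n-1})$. But $\bp_n$ is not obtained from $\bp_{n-1}$ by appending a ``final batch of steps'': it is the result of pushing the cover $c_n$ through \emph{every} move of $\bp_{n-1}$, possibly altering each one (types I--IV of Step~3) and interspersing maximization moves. So the position of the top letter in $Q^{(k-1)}$, and hence $d_{k-1}(Q^{(k-1)})$, depends on the entire pushout history, not on a single covering relation you can case-split on.

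The paper's remedy is twofold, and both ingredients are missing from your outline. First, it \emph{extends} charge and cocharge to arbitrary standard $k$-shape tableaux (Section~\ref{Schargekshape}), so that one can interpolate: the pushout produces a chain of $k$-shape tableaux $Q^{(k)}=T^{(0)},T^{(1)},\dots,T^{(r)}=Q^{(k-1)}$ with consecutive pairs differing by a \emph{single} row or column move, and it suffices to show $\charge(T^{(i)})=\charge(T^{(i+1)})+\charge(m^{(i)})$ for each such move. Second, it proves a charge--cocharge relation $\charge(T)=\binom{n}{2}-\cocharge(T)-|\Int^k(\lambda)|$ (Proposition~\ref{propchargecocharge}), which lets one handle row moves via charge and column moves via cocharge; without this duality the column case does not yield to the same analysis. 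Your proposal would need both of these to become a proof.
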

As discussed in Section~\ref{missing},
Theorem~\ref{theo} is one of the key ingredients in our approach
to prove the $k$-Schur expansion \eqref{HLkschur0}
 of the Hall-Littlewood polynomials.

A simple consequence of the compatibility between the charge and the weak bijection is that the charge of a usual standard tableau $T$ of
of $n$ letters
is equal to the sum of the charges of its corresponding paths in the poset of $k$-shapes, for
$k=2,3,\dots,n$.
In effect,
iterating the weak bijection starting from $T$
\begin{equation}
T \mapsto (T^{(n-1)},[\bp_{n}]), \, \quad
 T^{(n-1)} \mapsto (T^{(n-2)},[\bp_{n-1}]), \quad
\dots \, , \, \quad T^{(2)} \mapsto (T^{(1)},[\bp_{2}])
\end{equation}
we obtain a bijection that puts in correspondence $T$ and
$(T^{(1)}, [\bp_n],[\bp_{n-1}],\dots,[\bp_2])$.  Given that
there is a unique standard $1$-tableau $T^{(1)}$ we have that
 $T$ is in correspondence with the equivalence of paths $([\bp_n],[\bp_{n-1}],\dots,[\bp_2])$.
Moreover, the charge of $T^{(1)}$ being 0,
the compatibility between the charge and the weak bijection
in the standard case implies
\begin{equation}
\charge(T)
= \charge(\bp_{n})+\cdots+ \charge(\bp_{2})
\end{equation}

Here is the outline of the article.
For the article to be self-contained,
a good deal of results and definitions of \cite{LLMS2} must be introduced
or specialized to the standard case.  These include for instance moves,
covers, the poset of $k$-shapes, $k$-shape tableaux,
the pushout algorithm, and the weak bijection.  This is essentially the content of Sections~\ref{Sprem}, \ref{SubS:poset},  \ref{Skshapetab}, \ref{SecPush}
and \ref{Sweakbij}.  The charge of a $k$-tableau is defined
in Section~\ref{Schargektab}.  In Section~\ref{missing} we describe
the general context of this work, such as the Schur positivity of $k$-Schur functions, the $k$-Schur positivity of Hall-Littlewood polynomials, and finally the
connection with the atoms of \cite{LLM}.  In Section~\ref{Schargekshape},
we define the charge and cocharge of a $k$-shape tableau and derive a
relation between the two concepts (Proposition~\ref{propchargecocharge}).  Section~\ref{Scompatibility}
contains the proof of
the main result of this article, namely the
compatiblity between (co)charge and the weak bijection in the standard case
(Theorem~\ref{theo}).
Finally, we discuss in the conclusion why the non-standard case is still out of reach and how fundamental is the problem of
defining a Lascoux-Sch\"utzenberger type action of the symmetric group on
$k$-shape tableaux.

\section{Preliminaries} \label{Sprem}

For a fixed positive integer $k$, the object central to our
study is a family of ``$k$-shape" partitions that contains
both $k$ and $k+1$-cores.  The formula for $k$-branching
coefficients counts paths in a poset on $k$-shapes.  As
with Young order,
we will define the order relation in terms of adding boxes
to a given vertex $\lambda$, but now the added boxes must
form a sequence of ``strings".  Here we introduce
$k$-shapes, strings, and moves -- the ingredients for
our poset.

\subsection{Partitions}

A partition $\lambda=(\lambda_1,\lambda_2,\dots)$ of degree $|\lambda|=\sum_i \lambda_i$ is a vector of non-negative integers such that
$\lambda_i \geq \lambda_{i+1}$ for $i=1,2,\dots$.
 The length $\ell(\lambda)$
of $\lambda$ is the number of non-zero entries of $\lambda$.
Each partition $\lambda$ has an associated Ferrers' diagram
with $\lambda_i$ lattice squares in the $i^{th}$ row,
from bottom to top (French notation).  For instance, if
$\lambda = (6,5,5,4,3,2,2,1,1,1)$, the associated Ferrers' diagram is
$${\tiny \tableau[scY]{|||,|,|,,|,,,|,,,,|,,,,|,,,,,}}$$

Any lattice square in the Ferrers diagram
is called a cell (or simply a square), where the cell $(i,j)$ is in the $i$th row and $j$th column of the diagram.  Given a cell $b=(i,j)$, we let
$\row(b)=i$ and $\col(b)=j$.
The conjugate $\lambda'$ of  a partition $\lambda$ is the partition whose diagram is
obtained by reflecting  the diagram of $\lambda$ about the main diagonal.
Given a cell $b=(i,j)$ in $\lambda$, we let
\begin{equation} \label{eqarms}
a_{\lambda}(b)=\lambda_i-j\, , \qquad {\rm and} \qquad l_{\lambda}(b)=\lambda_j'-i \,  .
\end{equation}
The quantities $a_{\lambda}(b)$ and $l_{\lambda}(b)$
are respectively called the arm-length and leg-length.
The hook-length of $b=(i,j) \in \la$ is then defined
by $h_\la(b) = a_\la(b)+l_\la(b)+1$.  A $p$-core is a  partition
without cells of hook-length equal to $p$. We
let $\Core^p$ be the set of $p$-cores.

We say that the diagram $\mu$ is contained in $\la$, denoted
$\mu\subseteq \la$, if $\mu_i\leq \la_i$ for all $i$.  We also let
$\lambda+\mu$ be the partitions whose entries are $(\lambda+\mu)_i=\lambda_i+\mu_i$, and $\lambda \cup \mu$ be the partition obtained by reordering the entries of the concatenation of $\lambda$ and $\mu$.  The dominance ordering on partitions is such that $\lambda \geq \mu$ iff $|\lambda|=|\mu|$ and $\lambda_1+\cdots+\lambda_i \geq \mu_1+\cdots+\mu_i$ for all $i$.

A cell $b$ is $\lambda$-addable (or $b$ is an addable corner of $\lambda$)
 if adding $b$ to $\lambda$ produces a partition
$\mu$.  Similarly, a cell of $b$ is $\lambda$-removable
(or $b$ is a removable corner of $\lambda$) if
removing $b$ from $\lambda$ leads to a partition $\mu$.
The diagonal index of $b=(i,j)$ is  $\diag(b)=j-i$.  From it
we define the {distance} between cells $x$ and $y$ as
$|\diag(x)-d(y)|$.

Let $D=\mu/\la$ be a skew shape, the difference of Ferrers diagrams
of partitions $\mu\supset\la$. Although such a set of cells may
be realized by different pairs of partitions,
unless specifically stated otherwise, we shall use the notation $\mu/\la$
with the fixed pair $\la\subset\mu$ in mind.
A {\it horizontal} (resp. {\it vertical}) strip is
a skew shape that contains at most one cell in each column (resp. row).

\subsection{$k$-shapes}
\label{SS:kshapes}
The $k$-interior of a partition $\la$ is the subpartition made out
of the cells with hook-length larger than $k$:
$$\Int^k(\la)=\{b\in \la \mid h_\la(b) > k\}\,.$$
The $k$-boundary of $\la$ is the skew shape of cells with hook-length
 bounded by $k$:
$$\bdy^k(\la) =\la/\Int^k(\la)\,.$$
We define the {\it $k$-row shape}
$\rs^k(\la)\in\Z_{\ge0}^\infty$
(resp. {\it $k$-column shape} $\cs^k(\la)\in\Z_{\ge0}^\infty$) of $\la$
to be the sequence giving the number of cells in the rows (resp.
columns) of $\bdy^k(\la)$.
\begin{definition} \label{D:kshape}
Let $k \geq 2$ be an integer.
A partition $\la$ is a {\it $k$-shape} if
$\rs^k(\la)$ and $\cs^k(\la)$ are partitions.  We let
$\Ksh^k$ denote the set of $k$-shapes and $\Ksh^k_N=\{\lambda\in\Ksh^k:
|\bdy^k(\la)|=N\}$.
\end{definition}

\begin{example}\label{X:kshape} The partition
$\la=(8,4,3,2,1,1,1)\in\Ksh^{4}_{12}$,
since $\rs^4(\la)=(4,2,2,1,1,1,1)$ and $\cs^4(\la)=(3,2,2,1,1,1,1,1)$
are partitions and $|\bdy^4(\la)|=4+2+2+1+1+1+1=12$.
The partition $\mu=(3,3,1)\not\in\Ksh^{4}$ since $\rs^4(\mu)=(2,3,1)$ is not a partition.
\begin{equation*}
\begin{matrix}
{\scriptsize {\tableau[sby]{\\ \\ \\ \bl& \\ \bl&& \\ \bl&\bl& & \\
\bl&\bl&\bl&\bl&&&&}}} &&&&&
{\scriptsize { \tableau[sby]{\\ && \\ \bl&& }}} \\ \\
\bdy^4(\la) &&&&& \bdy^4(\mu)
\end{matrix}
\end{equation*}
\end{example}
The set of $k$-shapes includes both the  $k$-cores and $k+1$-cores.
\begin{proposition}[\cite{LLMS2}]
$\Core^k\subset \Ksh^k$ and
$\Core^{k+1}\subset\Ksh^k$.
\end{proposition}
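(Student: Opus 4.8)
The plan is to reduce the two containments to a single monotonicity statement about one pair of consecutive rows, exploiting conjugation symmetry so that rows and columns need not be handled separately. First I would record the elementary fact that along any row of a partition the hook-lengths strictly decrease from left to right, so that for any $\la$ the cells of $\bdy^k(\la)$ in row $i$ form a right-justified segment, of length $r_i:=\rs^k(\la)_i$, and likewise the cells in column $j$ form a top-justified segment of length $\cs^k(\la)_j$. Next, since hook-lengths are invariant under conjugation, $\bdy^k(\la')$ is the transpose of $\bdy^k(\la)$, whence $\cs^k(\la)=\rs^k(\la')$; and the conjugate of a $p$-core is again a $p$-core. So it suffices to prove that $\rs^k(\la)$ is a partition whenever $\la$ is a $k$-core or a $(k{+}1)$-core.

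The heart of the matter is the claim $r_i\ge r_{i+1}$ for all $i$. If $r_{i+1}=0$ this is trivial, so assume $r_{i+1}\ge 1$ and let $b=(i+1,\la_{i+1}-r_{i+1}+1)$ be the leftmost cell of $\bdy^k(\la)$ in row $i+1$, so $h_\la(b)\le k$. Since $r_{i+1}\le\la_{i+1}\le\la_i$, the cell $b'=(i,\la_i-r_{i+1}+1)$ of row $i$ — the cell at the same distance $r_{i+1}-1$ from the right end — is a genuine cell of $\la$; because hook-lengths decrease rightward along row $i$, it is enough to show $h_\la(b')\le k$, for then the rightmost $r_{i+1}$ cells of row $i$ all lie in $\bdy^k(\la)$. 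A short computation of arms and legs gives $h_\la(b)=r_{i+1}+\la'_c-i-1$ and $h_\la(b')=r_{i+1}+\la'_{c'}-i$ with $c=\la_{i+1}-r_{i+1}+1\le c'=\la_i-r_{i+1}+1$, so $h_\la(b')-h_\la(b)=\la'_{c'}-\la'_c+1\le 1$ because $\la'$ is weakly decreasing.

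To finish, I would split on the core hypothesis. If $\la$ is a $k$-core then $h_\la(b)\le k$ and $h_\la(b)\ne k$, hence $h_\la(b)\le k-1$ and $h_\la(b')\le h_\la(b)+1\le k$. If $\la$ is a $(k{+}1)$-core then $h_\la(b')\le h_\la(b)+1\le k+1$, while $h_\la(b')=k+1$ is impossible since no cell of a $(k{+}1)$-core has hook-length $k+1$; so again $h_\la(b')\le k$. In either case $b'\in\bdy^k(\la)$, giving $r_i\ge r_{i+1}$, so $\rs^k(\la)$ is a partition; applying this to $\la'$ shows $\cs^k(\la)=\rs^k(\la')$ is a partition as well, so $\la\in\Ksh^k$, proving both $\Core^k\con\Ksh^k$ and $\Core^{k+1}\con\Ksh^k$.

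I do not expect a serious obstacle: the only delicate point is the bookkeeping of the right-justified segments together with the edge cases ($r_{i+1}=0$, empty rows, or $\la_i=\la_{i+1}$), all of which are absorbed into the uniform estimate $h_\la(b')\le h_\la(b)+1$. That single inequality, coming from $\la'_{c'}\le\la'_c$, is what forces $h_\la(b')$ to overshoot the bound $k$ by at most one, and the core condition is precisely what closes the last unit gap.
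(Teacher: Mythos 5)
The paper does not prove this proposition; it is quoted from \cite{LLMS2}, so there is no in-paper argument to compare against. Your proof is correct and self-contained. The reduction to rows is legitimate (hook-lengths are invariant under conjugation, so $\cs^k(\la)=\rs^k(\la')$, and the conjugate of a $p$-core is a $p$-core), the strict decrease of hook-lengths along a row does make each row of $\bdy^k(\la)$ a right-justified segment, and your computation $h_\la(b')-h_\la(b)=\la'_{c'}-\la'_c+1\le 1$ for the two cells at equal distance from the right ends of consecutive rows is accurate; the absence of hooks of length $k$ (for a $k$-core) or $k+1$ (for a $(k{+}1)$-core) then closes the one-unit gap in each case, giving $r_i\ge r_{i+1}$. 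This is exactly the kind of hook-length bookkeeping used in \cite{LLMS2} for such statements, so nothing further is needed.
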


\begin{remark}
Since $k$ remains fixed throughout, we shall often for simplicity
suppress $k$
in the notation, writing $\bdy\la$, $\rs(\la)$, $\cs(\la)$, $\Ksh$,
and so forth.
\end{remark}

\subsection{Strings}
The primary notion to define our order on $k$-shapes
is a string of cells lying at a diagonal distance $k$ or $k+1$ from
one another.  To be precise, let $b$ and $b'$ be {\it contiguous}
cells when $|\diag(b)-\diag(b')|\in\{k,k+1\}$.
\begin{remark} \label{R:contig}
Since $\la$-addable cells cannot occur on consecutive diagonals,
a $\la$-addable corner $x$ is contiguous with
at most one $\la$-addable corner above (resp. below) it.
\end{remark}

\begin{definition}\label{D:string}
A {\it string} of {\it length} $\ell$ is a skew shape
$\mu/\la$ which consists of cells $\{a_1,\dots,a_\ell\}$,
where $a_i$ and $a_{i+1}$ are contiguous (with $a_{i+1}$ below $a_i$)
for each $1\leq i<\ell$.
\end{definition}
\begin{example} Let $k=3$, $\lambda=(4,2,1)$
and $\mu=(5,3,1,1)$.  Then $\mu/\lambda$ is a string of length 3.
If we denote each element of $\mu/\lambda$ by a $\bullet$, the
string can be represented as:
\begin{equation*}
{\scriptsize {\tableau[sby]{ \bl \bullet \\   \\ & & \bl \bullet
 \\  &
&& & \bl \bullet  \\}}}
\end{equation*}
\end{example}

Given a string $s=\mu/\lambda=\{a_1,\dotsc,a_\ell\}$,
of particular importance are certain columns and rows
called modified rows.  Define
$\change_\rs(s)=\rs(\mu)-\rs(\la)\in\Z^\infty$.
The {\it positively} (resp. {\it negatively})
{\it modified rows} of $s$ are those corresponding to
positive (resp. negative) entries in $\change_\rs(s)$.
We define in a similar way $\change_\cs(s)$, and
the {positively} (resp. {negatively})
{modified columns} of $s$.
Any string $s=\mu/\la$
can be categorized into one of four types
\begin{itemize}
\item A {\it row-type string} if $s$ does not have positively or
negatively modified rows
\item A {\it column-type string} if $s$ does not have positively or
negatively modified columns.
\item A {\it cover-type string} if $s$ has a positively modified row and
a positively modified column
\item A {\it cocover-type string} if $s$ has a negatively modified row and
a negatively modified column
\end{itemize}
It is helpful to depict a string $s=\mu/\la$ by its
{\it diagram}, defined by the following data: cells
of $s$ are represented by the symbol $\bullet$,
cells of $\bdy\la\setminus\bdy\mu$ are represented by $\circ$,
and cells of $\bdy\mu\cap\bdy\la$ in
the same row (resp. column) as some $\bullet$ or $\circ$
are collectively depicted by
a horizontal (resp. vertical) line segment. The four possible
string diagrams are shown in Figure \ref{F:stringdiags}.
\begin{figure}
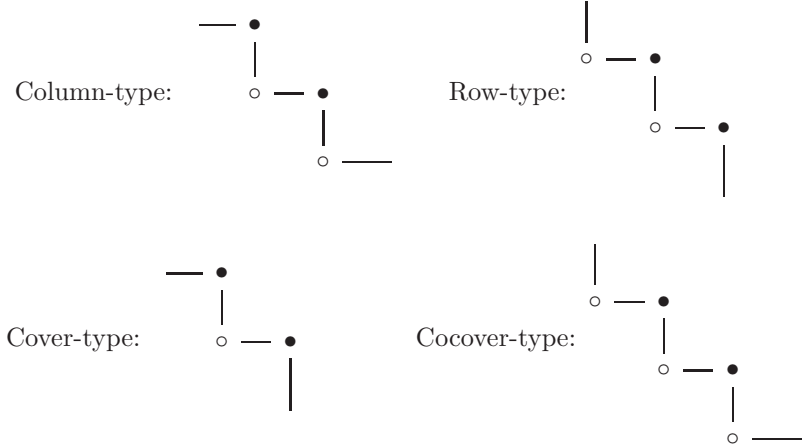

\dgARROWLENGTH=1.1em
$$
\text{Column-type:}
\begin{diagram}
\node{} \arrow{e,-} \node{\bullet} \arrow{s,-}\\ \node[2]{\circ}
\arrow{e,-} \node{\bullet} \arrow{s,-}\\ \node[3]{\circ} \arrow{e,-}
\end{diagram}
\qquad \qquad \text{Row-type:}
\begin{diagram}
\node{} \arrow{s,-}\\ \node{\circ} \arrow{e,-} \node{\bullet}
\arrow{s,-}\\ \node[2]{\circ} \arrow{e,-} \node{\bullet}
\arrow{s,-}\\ \node{}
\end{diagram}
$$
$$
\text{Cover-type:}
\begin{diagram}
\node{} \arrow{e,-} \node{\bullet} \arrow{s,-}\\ \node[2]{\circ}
\arrow{e,-} \node{\bullet} \arrow{s,-}\\ \node{}
\end{diagram}
\qquad \qquad \text{Cocover-type:}
\begin{diagram}
\node{} \arrow{s,-}\\ \node{\circ} \arrow{e,-} \node{\bullet}
\arrow{s,-}\\ \node[2]{\circ} \arrow{e,-} \node{\bullet} \arrow{s,-}
\\ \node[3]{\circ} \arrow{e,-}
\end{diagram}
$$
\caption{Types of string diagrams}
\label{F:stringdiags}
\end{figure}

\subsection{Moves}
The covering relations in the poset of $k$-shapes will be defined by letting a $k$-shape $\lambda$ be larger
than the $k$-shape $\mu$ when the skew diagram $\mu/\lambda$ is a
particular succession of strings (called a move).  To this end,
define two strings to be
{\it translates}
when they are translates of each other in $\Z^2$ by a fixed vector,
and their corresponding modified rows and columns agree in size.
Equivalently, two strings are translate if
their diagrams have the property that
$\bullet$'s and $\circ$'s appear in the same relative
positions with respect to each other and the
lengths of each corresponding horizontal and vertical segment
are the same.  We will also refer to cells $a_j$ and
$b_j$ as translates when strings $s_1=\{a_1,\ldots,a_\ell\}$
and $s_2=\{b_1,\ldots,b_\ell\}$ are translates.


\begin{definition} \label{D:defmove}
A {\it row move} $m$ of rank $r$ and length $\ell$
is a chain of partitions
$\la=\la^0\subset\la^1\subset\dotsm\subset\la^r=\mu$
that meets the following conditions:
\begin{enumerate}
\item \label{I:movestart} $\la\in\Ksh$
\item \label{I:movestrings}
$s_i=\la^i/\la^{i-1}$ is a row-type string consisting of $\ell$ cells
for all $1\le i\le r$
\item \label{I:movetranslates} the strings $s_i$ are translates of each other
\item \label{I:movetopcells} the top cells of $s_1, \dotsc, s_r$ occur
in consecutive columns
from left to right
\item \label{I:moveend} $\mu\in\Ksh$.
\end{enumerate}
We say that $m$ is a row move from $\la$ to $\mu$ and
write $\mu=m*\la$ or $m=\mu/\la$.
A {\it column move} is the transpose analogue of a row move.
A {\it move} is a row move or column move.  It should be noted that it
can be shown that moves are at most
of rank $k-1$ \cite{LLMS2}.
\end{definition}

\begin{example} \label{X:rowmove}
For $k=5$, a row move of length $1$
and rank $3$ with strings $s_1=\{A\}$, $s_2=\{B\}$,
and $s_3=\{C\}$ is pictured below. The lower case
letters are the cells that are removed
from the $k$-boundary when the
corresponding strings are added.
\begin{equation*}
\begin{diagram}
\node{{\scriptsize \tableau[sby]{\\ \\ \\ & \\ \bl&&& \\ \bl&a&&&\bl A
\\\bl&\bl&b&c&&\bl B& \bl C \\ \bl&\bl&\bl&\bl&\bl&&&&}} }
\node{\tiny\tableau[sby]{\\ \\ \\ & \\ \bl&&&\\ \bl&\circ&& & \bl \bullet\\
\bl&\bl&\circ&\circ& & \bl \bullet & \bl \bullet\\ \bl&\bl&\bl&\bl&\bl&&&&}}
\arrow{e,->}
\node{\quad \tiny\tableau[sby]{\\ \\ \\ & \\ \bl&&&\\ \bl&\bl&&& \\
\bl&\bl&\bl&\bl&&& \\ \bl&\bl&\bl&\bl&\bl&&&&}}
\end{diagram}
\end{equation*}
For $k=3$, a row move of length $2$ and rank $2$
with strings $s_1=\{A_1,A_2\}$ and $s_2=\{B_1,B_2\}$ is:
\begin{equation*}
\begin{diagram}
\node{{ \tableau[sby]{&\\ a_1&b_1&\bl A_1&\bl B_1 \\
\bl&\bl&a_2&b_2&\bl A_2 & \bl B_2}}}
\node[2]{\tiny\tableau[sby]{&\\
\circ&\circ \\ \bl&\bl&\circ&\circ}} \arrow{e,->} \node{\tiny\tableau[sby]{& \\
\bl&\bl&\bullet&\bullet \\ \bl&\bl&\bl&\bl&\bullet&\bullet }}
\end{diagram}
\end{equation*}
\end{example}
The rationale for the ``move'' terminology is the following.
Suppose that $m$ is a row move from $\lambda$ to $\mu$.
By definition, $\rs(\mu)=\rs(\la)$  (since the strings are all of row-type).
Hence $\bdy\mu$ can be viewed as a right-shift (or move) of certain rows of
$\bdy \la$.

\section{The poset of $k$-shapes} \label{SubS:poset}
We now define a poset structure, called the poset of $k$-shapes, on the set $\Ksh_N$ of $k$-shapes of fixed size $N$.
We say that $\lambda$ dominates $\mu$ in the poset of $k$-shapes
 if
there is a sequence of moves  $m_1,\dots,m_r$ such that
$\mu = m_r * \cdots * m_1 * \lambda$.  We let $\mathcal P^k(\lambda,\mu)$
denote the set of paths in the poset of $k$-shapes from $\lambda$ to $\mu$.
\begin{proposition}[\cite{LLMS2}]  An element of the \kshapeposet~ is maximal
(resp. minimal)
if and only if it is a
$(k+1)$-core (resp. $k$-core).
\end{proposition}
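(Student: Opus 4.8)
The plan is to characterize maximal and minimal elements directly in terms of the existence of applicable moves, and then translate the combinatorics of strings into hook-length conditions. First I would observe that $\lambda \in \Ksh_N$ is maximal precisely when no move can be applied to it, and minimal precisely when it is not of the form $m * \nu$ for any move $m$ and $k$-shape $\nu$ — equivalently, when no move ``removes'' strings from $\lambda$. So the statement splits into two symmetric (under conjugation) assertions: $\lambda$ admits no outgoing row or column move if and only if $\lambda$ is a $(k+1)$-core; and $\lambda$ is not the target of any move if and only if $\lambda$ is a $k$-core. Since column moves are the transpose analogues of row moves and conjugation exchanges $k$-cores with $k+1$-cores among $k$-shapes (here I would lean on the proposition $\Core^{k+1}\subset\Ksh^k$ and $\Core^k\subset\Ksh^k$ together with the fact that conjugation swaps $\rs^k$ and $\cs^k$), it suffices to handle one direction of one case carefully and obtain the rest by symmetry.

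Next I would unwind what it means for a move to be applicable. A row move of length $\ell$ and rank $r\geq 1$ starts by adding a single row-type string $s_1=\lambda^1/\lambda$ of $\ell$ cells. The key point is that a row-type string of length $\ell$ exists to be added to $\lambda$ exactly when $\bdy^k(\lambda)$ contains a row of cells that can be ``shifted right'' — concretely, when there is an addable corner whose addition, followed by the forced removals from the $k$-boundary, produces another $k$-shape with the same $k$-row shape. Translating through the arm/leg/hook definitions in \eqref{eqarms}, the obstruction to the existence of any such string is precisely the absence of cells of hook-length exactly $k+1$: if $\lambda$ has no cell of hook-length $k+1$ (i.e.\ $\lambda$ is a $(k+1)$-core) then, combined with the $k$-shape condition, no row-type string can be formed, and dually no column-type string can be formed either, so $\lambda$ is maximal. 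Conversely, if $\lambda$ has a cell of hook-length $k+1$, I would exhibit an explicit string (row- or column-type according to the local geometry of that cell) witnessing a move, using Remark~\ref{R:contig} to guarantee the contiguity condition $|\diag(b)-\diag(b')|\in\{k,k+1\}$ is met along the string. The minimal case is handled identically after transposing: $\lambda$ is minimal iff it has no cell of hook-length $k+1$ available to be ``created'' by a move run backwards, which by the same analysis is equivalent to $\rs^k$ and $\cs^k$ already being as small as a $k$-shape allows, i.e.\ $\lambda\in\Core^k$.

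The main obstacle, I expect, is the bookkeeping in the forward direction of the maximal case: showing that a $(k+1)$-core genuinely admits \emph{no} move, not merely no single string. One must rule out the possibility that although no row-type string sits on $\lambda$ itself, some sequence of covers nonetheless increases $\lambda$ in the poset. But this is resolved by the observation that a move is \emph{by definition} a single cover relation whose first step adds a string directly to $\lambda$ (condition (\ref{I:movestart})--(\ref{I:movestrings}) of Definition~\ref{D:defmove} require $s_1=\lambda^1/\lambda$), so ``no applicable string'' immediately gives ``no applicable move,'' and hence maximality. The genuinely delicate verification is the equivalence ``$\lambda$ is a $k$-shape with no cell of hook-length $k+1$'' $\Longleftrightarrow$ ``neither a row-type nor a column-type string can be added to $\lambda$'': here one argues that an addable corner of $\lambda$ whose insertion keeps $\rs^k$ (resp.\ $\cs^k$) a partition exists exactly when the diagonal just inside the $k$-boundary carries a hook of length $k+1$, which is a finite local computation on arms and legs. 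Once this equivalence is in hand, both halves of the proposition follow, and I would close by noting that it also recovers the earlier containments as a consistency check. $\qed$
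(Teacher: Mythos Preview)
The paper does not itself prove this proposition; it is quoted from \cite{LLMS2} without argument, so there is no in-paper proof to compare against. That said, your approach has a genuine error worth flagging.

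Your symmetry reduction is wrong: conjugation does \emph{not} exchange $k$-cores with $(k+1)$-cores. Hook-lengths are invariant under conjugation, so the conjugate of a $p$-core is again a $p$-core for the same $p$. Conjugation does swap row moves with column moves, and hence preserves both maximality and minimality in the poset, but it cannot be used to reduce the minimal case to the maximal case as you propose. The two cases are genuinely distinct: maximality is governed by the absence of hooks of length $k+1$, while minimality is governed by the absence of hooks of length $k$, and these are independent conditions requiring separate arguments.

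There is a second, subtler gap. You argue that ``no applicable string'' implies ``no applicable move,'' which is fine, but in the converse direction you need that whenever $\lambda$ has a cell of hook $k+1$ (resp.\ $k$), there is not merely a string but a full move $m$ with $m*\lambda\in\Ksh$ (resp.\ $\lambda=m*\nu$ with $\nu\in\Ksh$). A single row-type string need not land in $\Ksh$ --- condition~(\ref{I:moveend}) of Definition~\ref{D:defmove} may fail --- and one must then argue that adjoining further translate strings of the correct rank restores the $k$-shape property. This is where the real work in \cite{LLMS2} lies, and your ``finite local computation on arms and legs'' does not yet account for it.
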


\begin{example} \label{X:kshapeposet} The poset of $2$-shapes of size 4 is pictured below. Only the
cells of the $k$-boundaries are shown. Row moves are indicated by $r$ and column moves by $c$.
\dgARROWLENGTH=2.5em
\begin{equation}
\begin{diagram} \label{E:poset2,4}
\node{\tableau[pby]{ \\ \\ \bl& \\ \bl& }} \arrow{se,b}{r} %
\node[2]{\tableau[pby]{ \\ \\ \bl& & }} \arrow{sw,t}{c} \arrow{se,t}{r} %
\node[2]{\tableau[pby]{ & \\ \bl&\bl&&}} \arrow{sw,b}{c} \\
\node[2]{\tableau[pby]{ \\ \\ \bl & \\ \bl&\bl& }} \arrow{se,b}{r} %
\node[2]{\tableau[pby]{\\ \bl&\\ \bl&\bl&&}} \arrow{sw,b}{c} \\
\node[3]{\tableau[pby]{ \\ \bl& \\ \bl&\bl& \\ \bl&\bl&\bl&}}
\end{diagram}
\end{equation}
The poset of $3$-shapes of size 5
is pictured below.
\begin{equation}
\begin{diagram} \label{E:poset3,5}
\node{\tableau[pby]{ \\ \\ \\ \bl& \\ \bl& }} \arrow[2]{s,t}{r} %
\node{\tableau[pby]{ \\ \\ \\ \bl&&}} \arrow{s,t}{r}  %
\node{\tableau[pby]{ \\ & \\ \bl&& }} \arrow{sw,t}{c} \arrow{se,t}{r} %
\node{\tableau[pby]{ \\ \\ \bl &&&}} \arrow{s,b}{c} %
\node{\tableau[pby]{ & \\ \bl&\bl&&& }} \arrow[2]{s,b}{c} \\
\node[2]{\tableau[pby]{\\ \\ \bl& \\ \bl & & }} \arrow{sw,b}{c} \arrow{se,b}{r} %
\node[2]{\tableau[pby]{\\ & \\ \bl& \bl & & }} \arrow{sw,b}{c} \arrow{se,b}{r} \\
\node{\tableau[pby]{ \\ \\ \bl& \\ \bl&\\ \bl&\bl&}} %
\node[2]{\tableau[pby]{\\ \\ \bl& \\ \bl&\bl&&}} %
\node[2]{\tableau[pby]{\\ \bl&& \\ \bl&\bl&\bl&&}}
\end{diagram}
\end{equation}
\end{example}

\begin{definition} \label{D:charge}
Given a move $m$, the {charge}  of $m$, written $\charge(m)$, is
$0$ if $m$ is a row move and $r\ell$ if $m$ is
a column
move of length $\ell$ and rank $r$. Notice that in the column case,
$r \ell$ is simply the number of cells in the move $m$
when viewed as a skew shape.  The charge of a path $\bp =
(m_1,\dots,m_n)$ in $\Ksh_N$
is
$\charge(m_1)+\cdots+\charge(m_n)$, the sum of the charges of the moves that
constitute the path.
\end{definition}
\begin{definition}Given a move $m$, the {cocharge}  of $m$, written $\cocharge(m)$, is
$0$ if $m$ is a column move and $r\ell$ if $m$ is
a row
move of length $\ell$ and rank $r$.  The cocharge of a path is again the sum of
the cocharges of the moves forming the path.
\end{definition}

Let $\equiv$ be the equivalence relation on paths in $\Ksh_N$
generated by the following {\it diamond equivalences}:
\begin{equation} \label{E:elemequiv}
\tM m \equiv \tm M
\end{equation}
where $m,M,\tm,\tM$ are moves (possibly empty) between $k$-shapes
such that the diagram
 \dgARROWLENGTH=2.5em
\begin{equation} \label{E:commudiagram}
\begin{diagram}
\node[2]{\la} \arrow{sw,t}{m} \arrow{se,t}{M} \\ \node{\mu}
\arrow{se,b}{\tilde M} \node[2]{\nu} \arrow{sw,b}{\tilde m} \\
\node[2]{\gamma}
\end{diagram}
\end{equation}
commutes and the charge is the same on both sides of the diamond:
\begin{equation} \label{E:charge}
\charge(m)+\charge(\tM)=\charge(M)+\charge(\tm).
\end{equation}
The commutation is equivalent to the equality $\tM\cup m = \tm\cup
M$ where a move is regarded as a set of cells.  Observe that
the charge is by definition constant
on equivalence classes of paths.  We will let $\overline{\mathcal P}^k(\lambda,\mu)$ be the set of equivalences classes in ${\mathcal P}^k(\lambda,\mu)$,
that is, the set of equivalences classes of paths in the poset of $k$-shapes from
$\lambda$ to $\mu$.  It is easy to see that
\begin{equation}  \label{E:cocharge}
\cocharge(m)+\cocharge(\tM)=\cocharge(M)+\cocharge(\tm) \iff
\charge(m)+\charge(\tM)=\charge(M)+\charge(\tm)
\end{equation}
and thus  \eqref{E:charge} can be replaced by  the left hand side of
\eqref{E:cocharge} in the definition of diamond equivalence.
\begin{example}
Continuing Example \ref{X:kshapeposet}, the two paths in the poset
of 2-shapes from $\la=(3,1,1)$ to
$\mu=(4,3,2,1)$ have charge 2 and 3 respectively, and so are not equivalent.
\dgARROWLENGTH=2.5em
\begin{equation}
\begin{diagram}
\node[2]{\tableau[pby]{ \\ \\ \bl& & }} \arrow{sw,t}{c} \arrow{se,t}{r} \\
\node{\tableau[pby]{ \\ \\ \bl & \\ \bl&\bl& }} \arrow{se,b}{r} %
\node[2]{\tableau[pby]{\\ \bl&\\ \bl&\bl&&}} \arrow{sw,b}{c} \\
\node[2]{\tableau[pby]{ \\ \bl& \\ \bl&\bl& \\ \bl&\bl&\bl&}}
\end{diagram}
\end{equation}

The two paths in the poset of 3-shapes from $\la=(3,2,1)$ to
$\nu=(4,2,1,1)$ are diamond equivalent, both having charge 1.

\begin{equation}
\begin{diagram} 
\node[2]{\tableau[pby]{ \\ & \\ \bl&& }} \arrow{sw,t}{c} \arrow{se,t}{r} \\
\node{\tableau[pby]{\\ \\ \bl& \\ \bl & & }}\arrow{se,b}{r} %
\node[2]{\tableau[pby]{\\ & \\ \bl& \bl & & }} \arrow{sw,b}{c}  \\
\node[2]{\tableau[pby]{\\ \\ \bl& \\ \bl&\bl&&}} %
\end{diagram}
\end{equation}
\end{example}

\section{(Co)charge of a $k$-tableau} \label{Schargektab}
A $k$-tableau (or weak tableau) is a special type of tableau
originally introduced to describe the Pieri-type rules that the $k$-Schur functions satisfy.  The dual $k$-Schur functions are the generating
series of $k$-tableaux of a given shape.
\begin{definition} A $k$-tableau of weight $(\alpha_1,\dots,\alpha_N)$
is a sequence of $k+1$-cores
$\emptyset=\lambda^{(0)} \subseteq \lambda^{(1)} \subseteq \cdots \subseteq \lambda^{(N)}=\lambda$ such that, for all $i$,
$\rs(\lambda^{(i)})/\rs(\lambda^{(i-1)})$ is a horizontal strip
and $\cs(\lambda^{(i)})/\cs(\lambda^{(i-1)})$ is a vertical strip, both of size
$\alpha_i$.
\end{definition}

\begin{example} Let $\alpha = (2,3,1,2,2)$.  An example of a 3-tableau of weight $\alpha$ is
$$
 {\small  \tableau[scY]{4,5|3,4|2,2,4,5,5|1,1,2,2,2,4,5,5}}
$$
The corresponding sequence of 4-cores (represented by $\partial (\lambda^{(i)})$
to view more easily $\rs(\lambda^{(i)})$
and $\cs(\lambda^{(i)})$) is

\begin{tabular}{c@{\hspace{.5in}}c@{\hspace{.5in}}c@{\hspace{.5in}}c@{\hspace{.5in}}c}
 {\tiny $ \tableau[scY]{,}$} &  {\tiny $ \tableau[scY]{,|\bl,\bl,,,} $ }&
{\tiny $\tableau[scY]{|,|\bl,\bl,,,}$}
& {\tiny $\tableau[scY]{|,|\bl,,|\bl,\bl,\bl,,,}$} &  {\tiny $ \tableau[scY]{,|,|\bl,\bl,,,|\bl,\bl,\bl,\bl,\bl,,,}$} \\ \\
 $\lambda^{(1)}$ & $\lambda^{(2)}$ & $\lambda^{(3)}$ & $\lambda^{(4)}$ & $\lambda^{(5)}$ \\
 \end{tabular}

\noindent It can be checked that
$\rs(\lambda^{(i)})/\rs(\lambda^{(i-1)})$ is a horizontal strip
and $\cs(\lambda^{(i)})/\cs(\lambda^{(i-1)})$ is a vertical strip both of size $\alpha_i$ for $i=1,2,3,4,5$.
\end{example}

The $k+1$-residue (or simply residue if the value of $k$
is obvious from the context) of a cell
$b=(i,j)$ is equal to $j-i \mod k+1$. It can be shown \cite{LM:cores}
that a $k$-tableau of weight $(\alpha_1,\dots,\alpha_N)$ is such that
the cells occupied by letter $i$ in the $k$-tableau
have exactly $\alpha_i$ distinct residues.

For this article, we will mostly
need $k$-tableaux of weight $(1,1,\dots,1)$, that is, standard $k$-tableaux.
To be more specific:
\begin{definition} A standard $k$-tableau is a sequence of $k+1$-cores
$\emptyset=\lambda^{(0)} \subseteq \lambda^{(1)} \subseteq \cdots \subseteq \lambda^{(N)}=\lambda$ such that, for all $i$,
$\lambda^{(i)}/\lambda^{(i-1)}$ is a vertical and a horizontal strip and
$|\partial(\lambda^{(i)})|=|\partial(\lambda^{(i-1)})|+1$.
\end{definition}
Our previous observation then says that
the cells occupied by
letter $i$ in a standard $k$-tableau all have the same residue.

\begin{example}\label{ejemplo1}
Let $k=3$.  An example of a $3$-tableau is $T =$
{\tiny \tableau[scY]{6|4|3|2,6,7|1,4,5}}

The corresponding sequence of $4$-cores (with the residue of each letter)
is

\begin{tabular}{ccccccccccccc} {\tiny{\tableau[scY]{ 0}}} & $\subset$
& {\tiny \tableau[scY]{ 3 | $$ }} & $\subset$ &
{\tiny{\tableau[scY]{2| | $$}}} & $\subset$ &
{\tiny{\tableau[scY]{1| | | ,1}}} & $\subset$ &
{\tiny{\tableau[scY]{| | | , ,2}}} & $\subset$ &
{\tiny{\tableau[scY]{0|||,0|,,}}} & $\subset$ &
{\tiny{\tableau[scY]{|||,,1|,,}}}
\end{tabular}
\end{example}

We now introduce concepts of charge and cocharge for
$k$-tableaux.   The cell corresponding to the lowermost (resp. uppermost)
occurrence of a given letter $n$ will be denoted
 $\ndown$ (resp.  $\nup$).   For instance, the cell
$6^\downarrow$ is the marked one in the tableau
{\tiny{\tableau[scY]{ 6|4|3|2,\tf 6, 7|1,4,5}}}.  We will use
$n_-^{\uparrow}$ and $n_+^{\uparrow}$ to denote respectively $(n-1)^\uparrow$ and
$(n+1)^\uparrow$ (and similarly for $n_-^{\downarrow}$ and $n_+^{\downarrow}$).
Given
two cells $b_1$ and $b_2$ of a $(k+1)$-core $\lambda$
such
that $b_2$ is weakly below $b_1$, we let
${\rm diag}_e(b_1,b_2)$  be the number of diagonals of residue $e$
strictly between $b_1$ and $b_2$.  The charge of a standard
$k$-tableau $T$ on $N$
letters is
\begin{equation}
{\charge} (T) = \sum_{n=1}^N \charge(n)
\end{equation}
where $\charge(1)=0$, and where $\charge(n)$ for $n>1$ is defined recursively
in the following way.   Suppose that
$n^\uparrow$ and $n_-^\uparrow$ have residues
$e$ and $e_-$ respectively.  Then $\charge(n)$ is defined as
\begin{equation}\label{defcharge1}
 \ch(n) = \left \{ \begin{array}{lll}
                    \ch(n-1) + {\rm diag}_{e_-}(n_-^\uparrow,n^\uparrow) +1 & \text{if } & n_-^\uparrow \text{ is weakly above }  n^\uparrow \\ \\
                    \ch(n-1) - {\rm diag}_e(n^\uparrow ,n_-^\uparrow) & \text{if } &n_-^\uparrow \text{ is below } n^\uparrow \\
                    \end{array} \right.
\end{equation}

\begin{example}\label{exa21}
Consider the 4-tableau with the cells $\nup$ marked
 \quad $T = {\small{\tableau[scY]{\tf 10|\tf 8|\tf 5 ,\tf 7|\tf 4,\tf 6,10|\tf 1,\tf 2,\tf 3,5,7,\tf9,10 }}}$

\noindent Adding the residues to the core

  $$ {\tiny{\tableau[scY]{ 1  |  2|  3,  4| 4,  0, 1 |  0,  1,  2, 3, 4,  0,1 }}}$$

\noindent we obtain

\hspace{-0.2in}
$\begin{array}{llll}
  \ch(1) = 0 \; ; & \quad \ch(2) = \ch(1)+1=1 \; ;&   \ch(3)=\ch(2)+1=2 \; ; &   \quad \ch(4)=\ch(3)=2 \; \\
  \ch(5)= \ch(4) = 2 \; ;& \quad \ch(6)= \ch(5) +1 = 3 \; ;&  \ch(7)= \ch(6)= 3 \; ;& \quad \ch(8)= \ch(7) = 3 \; ; \\
  \multicolumn{2}{l}{\ch(9)= \ch(8) + {\rm diag}_2(8^\uparrow ,9^\uparrow)+1 =  3+1+1=5 \; ; } & \multicolumn{2}{l}{ \ch(10)= \ch(9) - {\rm diag}_1(10^\uparrow ,9^\uparrow)+1 =  5-1=4 }\\
 \end{array} $

\noindent Hence $ \ch(T) = 0+1+2+2+2+3+3+3+5+4= 25$.
\end{example}

Similarly, the cocharge of a standard
$k$-tableau $T$ on $N$
letters is
\begin{equation}
{\cocharge} (T) = \sum_{n=1}^N \cocharge(n)
\end{equation}
where $\cocharge(1)=0$, and where $\cocharge(n)$ for $n>1$ is defined recursively as (supposing that
$n^\downarrow$ and $n_-^\downarrow$ have residues
$e$ and $e_-$ respectively)
\begin{equation}\label{defcocharge1}
 \cocharge(n) = \left \{ \begin{array}{lll}
                    \cocharge(n-1) - {\rm diag}_{e_-}(n_-^\downarrow,n^\downarrow) & \text{if } & n_-^\downarrow \text{ is weakly above }  n^\downarrow \\ \\
                    \cocharge(n-1) + {\rm diag}_e(n^\downarrow ,n_-^\downarrow) +1& \text{if } &n_-^\downarrow \text{ is below } n^\downarrow \\
                    \end{array} \right.
\end{equation}

\begin{example} Consider the 4-tableau in Example~\ref{exa21} with this time the cells $\ndown$ marked

$$T = {\small{\tableau[scY]{ 10 | \tf 8 | 5, 7 | \tf 4 , \tf 6 , 10| \tf 1 , \tf 2 , \tf 3 , \tf 5 ,  \tf 7  ,  \tf 9 , \tf 10 }}}$$

\noindent We have $\cocharge(1)=0; \quad
\cocharge(2)= \cocharge(1)-{\rm diag}_4(1^\downarrow ,2^\downarrow) = 0; \quad
\cocharge(3)= \cocharge(2)-{\rm diag}_0(2^\downarrow ,3^\downarrow) = 0; \quad
\cocharge(4)= \cocharge(3)+{\rm diag}_4(4^\downarrow ,3^\downarrow) = 1; \quad
\cocharge(5)= \cocharge(4)-{\rm diag}_3(4^\downarrow ,5^\downarrow) = 1; \quad
\cocharge(6)= \cocharge(5)+{\rm diag}_0(6^\downarrow ,5^\downarrow) = 2; \quad
\cocharge(7)= \cocharge(6)-{\rm diag}_4(6^\downarrow ,7^\downarrow) = 2; \quad
\cocharge(8)= \cocharge(7)+{\rm diag}_1(8^\downarrow ,7^\downarrow) = 4; \quad
\cocharge(9)= \cocharge(8)-{\rm diag}_1(8^\downarrow ,9^\downarrow) = 3; \quad
\cocharge(10)= \cocharge(9)-{\rm diag}_4(9^\downarrow ,10^\downarrow) = 3$.
 Hence $\cocharge(T)=0+0+0+1+1+2+2+4+3+3= 16$.

\end{example}

\begin{remark}  That the charge and cocharge are given
by nonnegative integers follows from their compatibility with the
weak bijection.  In effect, iterating the weak bijection (as was done for instance
in the introduction) puts in correspondence a $k$-tableau
$Q^{(k)}$ with a sequence of paths $([\bp_k],[\bp_{k-1}],\dots,[\bp_2])$
such that $\charge(Q^{(k)})=\charge(\bp_k)+\charge(\bp_2)$ and
 $\cocharge(Q^{(k)})=\cocharge(\bp_k)+\cocharge(\bp_2)$, from which the
nonnegativity is immediate.
 \end{remark}

The remainder of this section is concerned with the definition of charge
in the non-standard case.   Our initial goal was to show that charge was
also compatible with the weak bijection in the non-standard case,
but as is discussed in the conclusion,
we were unfortunately not able to reach that goal.

 The definition of charge is given
for $k$-tableaux of dominant weights, and as in the usual case,
a Lascoux-Sch\"utzenberger type action of the symmetric group on $k$-tableaux
allows to send a $k$-tableau of any weight into a $k$-tableau of
dominant weight.  We do not prove here that the definition of charge
and of the Lascoux-Sch\"utzenberger type action of the symmetric group on $k$-tableaux are well-defined.

We first define the charge of a $k$-tableau $T$ in the non-standard case.
First suppose
that the weight $(\alpha_1,\dots,\alpha_N)$ of the $k$-tableau is dominant,
that is, that $\alpha_1 \geq \alpha_2 \geq \cdots \geq \alpha_N$.  As mentioned earlier, the letter $n$ in $T$ occupies exactly $\alpha_n$ residues
$i_1,\dots,i_{\alpha_n}$.  We can thus denote the letters $n$ in $T$ as
$n_{i_1},\dots, n_{i_{\alpha_n}}$.  We now form $\alpha_1$ words in the following way.
Start with $1_{j_1}=1_{\alpha_1-1}$ (the rightmost 1 in $T$) and construct a word recursively by appending to
$1_{j_1} 2_{j_2} \cdots n_{j_n}$ the letter $(n+1)_{j_{n+1}}$, where $j_{n+1}$
is the largest element in the total order $j_n+1<j_n+2<\cdots<j_n-1$ (the residues are taken modulo $k+1$). Note that it can be shown that there
is such a letter.
Once $w_1=1_{j_1} 2_{j_2} \cdots N_{j_N}$ has been constructed, remove all the letters $1_{j_1},2_{j_2}, \dots, N_{j_N}$ from $T$, and
construct $w_2$ in the same manner starting this time with $1_{\alpha_1-2}$ (the second rightmost 1 in $T$) and stopping at the largest letter.
After all the words $w_1,\dots,w_{\alpha_1}$ have
been constructed, the charge of $T$ is the sum of the charges of the $w_i$'s,
where the charge of $w_i$ is the charge of the subtableau of $T$
 obtained by considering only the letters in $w_i$.  For instance,
let $k=4$ and consider the $4$-tableau of evaluation $(2,2,2,2,2,2,1)$
$$
T= {\small  \tableau[scY]{7|6|5,6|3,4,7|2,3,5,5,6|1,1,2,3,4,4,5,5,6}}
$$
The letters are $1_0, 1_1, 2_2,2_4, 3_0,3_3, 4_0, 4_4,5_1,5_2,6_1,6_3,7_0$,
from which we extract $w_1=1_1 2_4 3_3 4_0 5_2 6_1 7_0$ and $w_2=1_0 2_2 3_0 4_4 5_1 6_3$.  The charge of $w_1$ is computed using the cells
$$
 {\small  \tableau[scY]{\tf 7|\tf 6|\tf 5,6|\tf 3,4,7|\tf 2,3,5,5,6|1,\tf 1,2,3,4,\tf 4,5,5,6}}
$$
Hence $\charge(w_1)=0+0+0+2+1+1+1=5$.  Similarly, the charge of $w_2$ is computed using the cells
$$
 {\small  \tableau[scY]{7|6|5,\tf 6|3,\tf 4,7|2,\tf 3,\tf 5,5,6|\tf 1,1,\tf 2,3,4,4,5,5,6}}
$$
and is such that $\charge(w_2)=0+1+1+1+2+2=7$.  The charge of $T$ is therefore equal to $5+7=12$.

If the weight of $T$ is not dominant, we define the charge of $T$ to be the
charge of $\sigma(T)$, where $\sigma(T)$ is the unique $k$-tableau of dominant
weight obtained
by the following (conjectural) Lascoux-Sch\"utzenberger type action of the symmetric group on $k$-tableaux.   Consider the elementary transposition $\sigma_i$
which sends a tableau of weight $(\alpha_1,\dots,\alpha_N)$ to a tableau of weight $(\alpha_1,\dots,\alpha_{i-1},\alpha_{i+1},\alpha_i,\alpha_{i+2},\dots\alpha_N)$.
Let $i$ and $i+1$ be denoted respectively as $a$ and $b$.
Suppose that the $a$'s and $b$'s in
$T$ occupy residues ${i_1},\dots, {i_r}$ and $j_1,\dots,j_s$ respectively.
We say that $b_j$ lies on the floor if there is a $b_j$ in $T$ without an
$a$ below it (that is, one unit downward).  If $b_j$ does not lie on the floor then let $b_j=\tilde b_{j+1}$,
otherwise let $b_j=\tilde b_j$.  Order the $a$'s and $\tilde b$'s using the total order
$$
\tilde b_0 < a_0 < \tilde b_1 < a_1 <\cdots < \tilde b_k < a_k
$$
and let the word corresponding to the the ordered
$a$'s and $\tilde b$'s be $w$.
Then do the usual
pairing of the
letters $a$'s
 and $\tilde b$'s followed by a
permutation of their  weight to obtain a word $w'$.
That is,
pair every factor $\tilde b a$ of $w$, and let $w_1$ be the subword of $w$
made out of the unpaired letters.  Pair every factor $\tilde b\,a$ of $w_1$, and
let $w_2$ be the subword made out of the unpaired letters.  Continue
in this fashion as long as possible.
When all factors $\tilde b\,a$ are paired and unpaired letters of $w$
are of the form $a^r \tilde b^s$, send $a^r \tilde b^s$ to $a^s \tilde b^r$
(keeping track of the residues). Now the $a$'s in $w'$ will have residue
$i_1',\dots,i_s'$.  Take $T_{i-1}'=T_{i-1}$ and let $T'_i$ be the unique tableau such that letter $i$ occupies residues $i_1',\dots,i_s'$.
Let $T'_{i+1}$ be the unique tableau of the same shape as
$T_{i+1}$ with subtableau $T'_i$.  Finally, let $T'$ be obtained by
filling the rest of the tableau as in $T$.  We then define
$\sigma_i (T)= T'$.  Since the $\sigma_i$'s generate the symmetric group,
this defines an action of the symmetric group on $k$-tableaux.  It is proven
in \cite{MEtesis} (in a more general context) that $\sigma_i$ is an
involution that sends a $k$-tableau into a $k$-tableau.  The fact that the $\sigma_i$'s obey the Coxeter relations was always in our mind a consequence of the yet
unproven compatibility of the Lascoux-Sch\"utzenberger type action of the symmetric group on $k$-tableaux with the weak bijection (in which case the Coxeter relations would follow from the known Coxeter relations when $k$ is large), and
as such we never intended to prove them.

\section{Standard $k$-shape tableaux} \label{Skshapetab}

An generalization of $k$-tableaux (fillings of $k+1$-cores)
 to certain fillings of $k$-shapes called $k$-shape tableaux
was introduced in \cite{LLMS2}.  For the purposes of this article,
we will only need to describe explicitly the standard case.

\begin{definition}  We say that a string $s=\mu/\lambda=\{a_1,\dots,a_\ell \}$
can be continued below (resp. above) if there is an addable corner
of $\lambda$ below (resp. above) the string $s$ that is contiguous to  $a_{\ell}$ (resp. $a_1$).
We say that a cover-type string is maximal if it cannot be continued above
or below.
\end{definition}

\begin{example}
Let $k=5$ and consider
$\lambda = \; {\tiny {\tableau[scy]{ \\ \\ ,\\ , \\ \bl& , ,\\ \bl& \bl& , , \\ \bl& \bl& \bl& \bl& \bl& , , , }}} $

$$\mu = \; {\tiny {\tableau[scy]{ \tf \\ \\ \\ ,\\ \bl& , \tf\\ \bl& , ,\\ \bl& \bl& \bl& , , \tf \\ \bl& \bl& \bl& \bl& \bl& , , , , \bl {\, \bullet} } \qquad \qquad \nu = \;  \tableau[scy]{ \bl \\ \\ ,\\ , , \bl {\, } \\ \bl& , ,\\ \bl& \bl& , , , \bl {\, \bullet} \\ \bl& \bl& \bl& \bl& \bl& , , , , \tf}  \qquad {\rm and} \qquad \gamma = \;  \tableau[scy]{\tf \\ \\ \\ ,\\ \bl& , \tf \\ \bl& , ,\\ \bl& \bl& \bl& , , \tf \\ \bl& \bl& \bl& \bl& \bl& \bl& , , ,  \tf }}}
$$
\noindent In this case $\mu /\lambda$ (indicated by framed boxes in the diagram)
is a string that can be continued below,  $\nu / \lambda$ is a string that can be continued above and $\gamma / \lambda$ is a maximal cover-type string. We denoted by $\bullet$ the contiguous addable corner below or above.
\end{example}

\begin{definition}  We say that a string $s=\mu/\lambda=\{a_1,\dots,a_\ell \}$
can be {\it reverse-}continued below (resp. above) if there is a removable corner
of $\mu$ below (resp. above) the string $s$ that is contiguous to  $a_{\ell}$ (resp. $a_1$).
We say that a cover-type string is reverse-maximal if it cannot be reverse-continued above or below.
\end{definition}

\begin{example}
Let $k=5$  and consider
$\lambda = \;  {\tiny {\tableau[scy]{ \\ \\ ,\\ , \\ \bl& , ,\\ \bl& \bl& \bl& \bl& , , , , }}}  $

$$\mu = \; {\tiny { \tableau[scy]{ \\ , \, \tf \\ ,\\ , \\ \bl& \bl& , , \, \tf \\ \bl& \bl& \bl& \bl& \bl& , , , , \star}}}  \qquad \qquad \nu = \;  {\tiny {\tableau[scy]{ \\ \\ , \star\\ , \\ \bl& \bl& , , \, \tf \\ \bl& \bl& \bl& \bl& \bl& , , , , \, \tf }}} \qquad {\rm and} \qquad \gamma = \;  {\tiny {\tableau[scy]{ \\ , \, \tf \\ ,\\ , \\ \bl& \bl& , , \, \tf \\ \bl& \bl& \bl& \bl& \bl& , , , , \, \tf } }}$$
\noindent
We have that $\mu / \lambda$ is a string that can be reverse-continued below,
$\nu / \lambda$ is a string that can be reverse-continued above and $\gamma/\lambda$ is a reverse-maximal cover-type string. We denoted by $\star$ the contiguous removable corners below or above.
  \end{example}

\begin{definition} We say that $\mu/\lambda$ is a cover if $\lambda$ and
$\mu$ are $k$-shapes and $\mu/\lambda$ is a cover-type string.  It is
maximal (resp. reverse-maximal) if $\mu/\lambda$ is a maximal string (resp. reverse-maximal string).
\end{definition}

A standard $k$-shape tableau of shape $\lambda$ is a sequence of $k$-shapes
$$
\emptyset= \lambda^{(0)} ,\lambda^{(1)},\dots,\lambda^{(n-1)},\lambda^{(n)}=\lambda
$$
such that $\lambda^{(i)}/\lambda^{(i-1)}$ is a cover for all $i=1,\dots,n$.
A standard $k$-shape tableau is maximal (resp. reverse-maximal) if every
cover composing the tableau is maximal (resp. reverse-maximal).  A standard
$k$-shape tableau is naturally associated to a filling of $\lambda$ such that
letter $i$ occupies the cells $\lambda^{(i)}/\lambda^{(i-1)}$.  Given a $k$-shape
tableau $T$ of $n$ letters, we let $T_i$ be the subtableau of $T$ obtained
by removing all letter $i+1,\dots,n$ from $T$.

\begin{example}
For $k=3$, and $n=8$ the standard $3$-shape tableau
$T = {\tiny{\tableau[scY]{8|6|4,8|3,5,6|1,2,4,5,6,7}}}$

\noindent corresponds to the sequence of $3$-shapes

$$ \lambda^{(1)} = {\tiny{\tableau[scY]{\bullet}} } \qquad \quad  \lambda^{(2)} = {\tiny{\tableau[scY]{,\bullet}} }\qquad \quad  \lambda^{(3)} = {\tiny{\tableau[scY]{ \bullet|,}}} \qquad \quad \lambda^{(4)} = {\tiny{\tableau[scY]{\bullet| |\bl ,,\bullet}} }
 \qquad \quad \lambda^{(5)} = {\tiny{\tableau[scY]{|,\bullet |\bl ,\bl ,,\bullet}}}
$$
$$
\lambda^{(6)} = {\tiny{\tableau[scY]{ \bullet||\bl ,,\bullet|\bl,\bl,\bl ,,\bullet }}} \qquad \quad
\lambda^{(7)} = {\tiny{\tableau[scY]{||\bl,,|\bl,\bl,\bl,,,\bullet}}}
\qquad \quad
\lambda^{(8)} = {\tiny{\tableau[scY]{\bullet||\bl ,\bullet|\bl,,|\bl,\bl,\bl,,,}}}
$$
\noindent It can be checked easily that $\lambda^{(i)}/\lambda^{(i-1)}$
(represented by $\bullet$ in the diagrams) is a cover
for all $i$ from 1 to 8.

\end{example}

For a given $k$, the following proposition allows to connect sequences of maximal and reverse-maximal covers to standard $k-1$-tableaux and $k$-tableaux respectively.
\begin{proposition}[\cite{LLMS2}]  \label{propktab}
A sequence $\emptyset=\lambda^{(0)} \subseteq \lambda^{(1)} \subseteq \cdots \subseteq \lambda^{(N)}=\lambda$ is a standard $k$-tableau if and only if
$\lambda$ is a $k+1$-core and
$\lambda^{(i)}/\lambda^{(i-1)}$ is a reverse-maximal cover for all $i$.  Similarly, a sequence $\emptyset=\lambda^{(0)} \subseteq \lambda^{(1)} \subseteq \cdots \subseteq \lambda^{(N)}=\lambda$ is a $k-1$-tableau if and only if
$\lambda^{(i)}/\lambda^{(i-1)}$ is a maximal cover for all $i$.
\end{proposition}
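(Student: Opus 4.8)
This result is recalled from \cite{LLMS2}; we sketch the argument. It suffices to treat the first biconditional, the $k$-tableau/$(k+1)$-core one. The $(k-1)$-tableau/$k$-core statement is proved by the same scheme, with the roles of ``above'' and ``below'', of removable and addable corners, and of reverse-maximal and maximal covers interchanged. Two points make this genuinely parallel: first, for a $k$-core $\la$ no cell has hook-length exactly $k$, so $\bdy^{k}(\la)=\bdy^{k-1}(\la)$ and the shapes $\rs^{k},\cs^{k}$ that govern covers of $k$-shapes agree there with the shapes $\rs^{k-1},\cs^{k-1}$ that govern $(k-1)$-tableaux; second, $\emptyset$ is a $k$-core, so ``being a $k$-core'' propagates \emph{upward} along a chain of maximal covers (which is why no hypothesis on $\la$ is imposed in that case), whereas ``being a $(k+1)$-core'' propagates \emph{downward} along a chain of reverse-maximal covers (which is why $\la$ is required to be a $(k+1)$-core in the first case).

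The plan is to reduce to two one-step claims and then iterate. Since $\emptyset$ is a $(k+1)$-core, it is enough to prove: (A) if $\la\subseteq\mu$ are $k$-shapes, $\mu$ is a $(k+1)$-core and $\mu/\la$ is a reverse-maximal cover, then $\la$ is a $(k+1)$-core and $\mu/\la$ is a standard weak strip, i.e.\ $\rs^{k}(\mu)/\rs^{k}(\la)$ is a horizontal strip of size $1$ and $\cs^{k}(\mu)/\cs^{k}(\la)$ is a vertical strip of size $1$; and (B) if $\la\subseteq\mu$ are $(k+1)$-cores and $\mu/\la$ is a standard weak strip, then $\mu/\la$ is a reverse-maximal cover. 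Granting (A) and (B): for the ``$\Leftarrow$'' direction of the proposition, apply (A) to the covers $\la^{(i)}/\la^{(i-1)}$ for $i=N,N-1,\dots,1$, starting from the hypothesis that $\la^{(N)}=\la$ is a $(k+1)$-core; every $\la^{(i)}$ is then forced to be a $(k+1)$-core and every step to be a standard weak strip, which is exactly the definition of a standard $k$-tableau. For the ``$\Rightarrow$'' direction, a standard $k$-tableau is by definition a chain of $(k+1)$-cores, so in particular $\la$ is one, and (B) turns each weak-strip step into a reverse-maximal cover.

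To prove (B) I would invoke the residue description of $k$-tableaux recalled in Section~\ref{Schargektab}: the cells of a standard weak strip $\mu/\la$ all carry one and the same $(k+1)$-residue $e$, and by the structure of weak $k$-strips (see \cite{LM:cores}), listing them from top to bottom as $a_1,\dots,a_\ell$ one gets a string all of whose steps have diagonal distance exactly $k+1$ (a step at distance $k$ would change the residue), so $\mu/\la$ is a string in the sense of Definition~\ref{D:string}. Computing $\change_\rs$ and $\change_\cs$ of this string shows it has a single positively modified row, a single positively modified column, and no negatively modified row or column, hence it is of cover-type, and therefore a cover (both endpoints being $k$-shapes). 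Reverse-maximality is then a purely local matter: a removable corner of $\mu$ contiguous to $a_\ell$ from below, or to $a_1$ from above, would have to be a further cell of residue $e$ since $\mu$ is a $(k+1)$-core, contradicting that the residue-$e$ weak strip attached to the $(k+1)$-core $\la$ was already complete.

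The implication (A) is the technical core and the place I expect the real difficulty. Once one knows $\la$ is a $(k+1)$-core, the conclusion that $\mu/\la$ is a standard weak strip is automatic: a cover raises $|\bdy|$ by exactly $1$, and under the bijection between $(k+1)$-cores and $k$-bounded partitions \cite{LM:cores} adding one box to the $k$-bounded partition corresponds precisely to attaching a single-residue weak strip. So the content of (A) is: \emph{if $\mu$ is a $(k+1)$-core and $\mu/\la$ is a reverse-maximal cover, then $\la$ is a $(k+1)$-core}. I would argue this by contraposition. If $\la$ has a cell $c$ with $h_\la(c)=k+1$ then, because $\mu$ is a $(k+1)$-core, the string $\mu/\la$ must extend the arm or the leg of $c$; carrying this out for the uppermost (resp.\ lowermost) such $c$ and matching its hook-length data against the diagram of a cover-type string produces a removable corner of $\mu$ contiguous to $a_\ell$ (resp.\ $a_1$), i.e.\ a reverse-continuation of $\mu/\la$ — a contradiction. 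This last piece of bookkeeping, reconciling the local hook-length geometry of $\la$ and $\mu$ with the cover-type string diagram, is the delicate part; the rest of the argument is formal.
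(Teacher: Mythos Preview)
The paper does not prove this proposition; it is simply imported from \cite{LLMS2} and stated without argument, so there is no in-paper proof to compare your proposal against. Your sketch is therefore not a reconstruction of anything in this paper but rather an attempt at the underlying result in \cite{LLMS2}.

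On its own merits, your outline is sound in structure. The reduction to the one-step claims (A) and (B), together with the observation that ``being a $(k+1)$-core'' propagates downward along reverse-maximal covers while ``being a $k$-core'' propagates upward along maximal covers, is exactly the right way to organize the induction, and your remark that $\bdy^k=\bdy^{k-1}$ on $k$-cores is the correct reason why the second biconditional is parallel. Your argument for (B) is essentially complete: the key point you use implicitly---that two removable corners of a $(k+1)$-core cannot lie at diagonal distance exactly $k$ (else the cell in the row of one and column of the other has hook $k+1$)---is what forces a hypothetical reverse-continuation to sit at distance $k+1$ and hence to carry residue $e$, after which the standard fact that a $(k+1)$-core has no addable and removable corners of the same residue finishes it. You might want to make that hook computation explicit rather than hiding it behind ``since $\mu$ is a $(k+1)$-core''.

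For (A) you correctly isolate the real content (that $\la$ is forced to be a $(k+1)$-core) and give the right contrapositive strategy, but the final paragraph is genuinely only a sketch: turning ``the string must extend the arm or leg of $c$'' into the existence of a specific contiguous removable corner of $\mu$ requires tracking which of the cells $a_i$ is adjacent to the row or column of $c$, and checking that the resulting corner really is removable in $\mu$ and sits at the end of the string. This is routine but not automatic, and is where the argument in \cite{LLMS2} does the actual work; as written, your proposal identifies the mechanism without carrying it out.
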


\section{Pushout algorithm in the standard case} \label{SecPush}

The main result of \cite{LLMS2} is the construction of a bijection
between pairs $(S,[\bp])$ and $(\tilde S, [\tilde \bp])$,
where $S=\mu/\lambda$ (resp. $\tilde S=\omega/\delta$) is a certain
reverse-maximal strip (resp. maximal strip),
where $\bp$ (resp. $\tilde \bp$) is a path in the poset of $k$-shapes
from $\lambda$ to $\delta$ (resp. $\mu$ to $\omega$), and where
$[\bp]$ denotes the equivalence class of
the path $\bp$.  The bijection can be described diagrammatically by the
commuting diagram
\begin{equation}
\begin{diagram}
\node{\lambda} \arrow{s,l}{S}
 \arrow{e,t}{[\bp]} \node{\delta} \arrow{s,r,..}{\tilde S}\\
\node{\mu} \arrow{e,t,..}{[\tilde \bp]}
\node{\omega}
\end{diagram}
\end{equation}
In the standard case, that is, when the strips $S$ and $\tilde S$ are of rank 1, the bijection is between pairs $(c,[\bp])$ and $(\tilde c, [\tilde \bp])$
where  $c=\mu/\lambda$ (resp. $\tilde c$) is
a reverse-maximal cover (resp. maximal cover)
\begin{equation} \label{bijecstand}
\begin{diagram}
\node{\lambda} \arrow{s,l}{c}
 \arrow{e,t}{[\bp]} \node{\delta} \arrow{s,r,..}{\tilde c}\\
\node{\mu} \arrow{e,t,..}{[\tilde \bp]}
\node{\omega}
\end{diagram}
\end{equation}
The map  $(c,[\bp]) \mapsto (\tilde c, [\tilde \bp])$ is given by a certain pushout algorithm.
We will now describe the canonical form of the pushout algorithm, which given the pair $(c,\bp)$ outputs a pair
$(\tilde c, \tilde \bp)$.  Note that the inverse algorithm associates to
$(\tilde c, \tilde \bp)$ a pair $(c, \bp')$, where
$[\bp']=[\bp]$.  This explains the need to work modulo
equivalences.

The basic ingredients of the
algorithm are the maximization (below and above) of a cover $c$ and the pushout of the pair
$(c,m)$ when $c$ is a maximal cover and $m$ is a move.  Repeated application
of the following three steps will produce the pair
$(\tilde c, \tilde \bp)$ out of $(c, \bp)$.

\noindent {\bf Step 1 (maximization below).}
If the cover
$c=\mu/\lambda=\{a_1,\dots,a_\ell\} $ can be continued below, then
let $c'=c\cup m=(\mu \cup m)/\lambda$,
where $m=\{ a_{\ell+1},\dots,a_{\ell+n}\}$ is the
longest sequence of contiguous addable corners of $\lambda$ such that
$a_{\ell+1}$ is contiguous to $a_{\ell}$ and below it.  In this case,
it is immediate that $c'$ is a cover and that
$m$ is a row-type string.  Moreover, it can be shown \cite{LLMS2}
that $m$ is in fact a row move from $\mu$ to $\mu \cup m$.
Diagrammatically, this gives
\begin{equation}
\begin{diagram}
\node{\lambda} \arrow{s,l}{c}
 \arrow{e,t}{\emptyset} \node{\lambda} \arrow{s,r,..}{c'}\\
\node{\mu} \arrow{e,t,..}{m}
\node{\mu \cup m}
\end{diagram}
\end{equation}

\begin{example} Here is an example of maximization below when  $k=4$
(the cover and its
maximization below are highlighted with dots):
\begin{equation*}
 \begin{diagram}
\node{\tableau[pby]{ \\ \\ \bl& & & \\ \bl& \bl& \bl& \bl& & & &  }} \arrow{s,l}{c}
 \arrow{e,t}{\emptyset} \node{\tableau[pby]{ \\ \\ \bl& & & \\ \bl& \bl& \bl& \bl& & & &  }} \arrow{s,r,..}{c'}\\
\node{\tableau[pby]{ \\ & \cdot \\ \bl& & & & \bl {\, \cdot }\\ \bl& \bl& \bl& \bl& & & &  & \bl {\, \cdot  }}} \arrow{e,t,..}{m}
\node{\tableau[pby]{ \\ &  \\ \bl& \bl& & &   \\ \bl& \bl& \bl& \bl& \bl& & & &  }}
\end{diagram}
\end{equation*}

\end{example}

{\bf Step 2 (maximization above).} If the cover $c$
cannot be continued below but can be continued above,  then
let $c'=c\cup m=(\mu \cup m)/\lambda$,
where $m=\{ a_{-n},\dots,a_{0}\}$ is the
longest sequence of contiguous addable corners of $\lambda$ such that
$a_{0}$ is contiguous to $a_{1}$ and above it.  It is again immediate that $c'$ is a cover and that  $m$ is a column-type string.  Moreover, it is
shown \cite{LLMS2} that
$m$ is in fact a column move from $\mu$ to $\mu \cup m$.
Diagrammatically, this gives
\begin{equation}
\begin{diagram}
\node{\lambda} \arrow{s,l}{c}
 \arrow{e,t}{\emptyset} \node{\lambda} \arrow{s,r,..}{c'}\\
\node{\mu} \arrow{e,t,..}{m}
\node{\mu \cup m}
\end{diagram}
\end{equation}

\begin{example} Here is an example of a maximization above when $k=4$ (the
cover and its maximization above are highlighted with dots):
\begin{equation*}
\begin{diagram}
\node{\tableau[pby]{ \\ &\\ \bl& & & \\ \bl& \bl& \bl& \bl& & & \\ \bl& \bl& \bl& \bl& \bl& \bl& \bl& & & &  }} \arrow{s,l}{c}
 \arrow{e,t}{\emptyset} \node{\tableau[pby]{ \\ &\\ \bl& & & \\ \bl& \bl& \bl& \bl& & & \\ \bl& \bl& \bl& \bl& \bl& \bl& \bl& & & &  }} \arrow{s,r,..}{c'}\\
\node{\tableau[pby]{ & \bl {\, \cdot} \\ &\\ \bl& & & & \bl {\, \cdot}\\ \bl& \bl& \bl& \bl& & & & \bl {\, \cdot}\\ \bl& \bl& \bl& \bl& \bl& \bl& \bl& & & & & &\cdot }} \arrow{e,t,..}{m}
\node{\tableau[pby]{ &  \\ & \\ \bl& \bl& & &   \\ \bl& \bl& \bl& \bl& \bl& & &  \\ \bl& \bl& \bl& \bl& \bl& \bl& \bl& \bl& & & &    }}
\end{diagram}
\end{equation*}
\end{example}

{\bf Step 3 (maximal pushout).}
If $c$ is a maximal cover and $m$ is any move then the maximal pushout of
 $(c,m)$ produces the pair
$(\tilde c, \tilde m)$ (which we will describe explicitly below), where
$\tilde m$ is a move:
\begin{equation}
\begin{diagram}
\node{\lambda} \arrow{s,l}{c}
 \arrow{e,t}{m} \node{\nu} \arrow{s,r,..}{\tilde c}\\
\node{\mu} \arrow{e,t,..}{\tilde m}
\node{\eta}
\end{diagram}
\end{equation}
We say that the cover $c=\mu/\lambda$ and the row move $m$ from $\lambda$ to $\mu$ are {\it interfering} if $c$ and $m$ do not intersect and $\mu  \cup m$
is not a $k$-shape (that is, if $\cs(\mu)+\Delta_{\cs}(m)$ is not a partition).
Similarly, we say that the cover $c=\mu/\lambda$ and the column move $m$ from $\lambda$ to $\mu$ are interfering if $c$ and $m$ do not intersect and $\mu  \cup m$
is not a $k$-shape (that is, if $\rs(\mu)+\Delta_{\rs}(m)$ is not a partition).

For a set of cells $A$, we let $A_U$ and $A_R$ be the set of
cells obtained by translating the cells of $A$ by one unit respectively
upward and to the right.

When $m$ is a row move, the pushout is of one of the 4 possible types
\cite{LLMS2}.
\begin{enumerate}
\item[I] If $m$ and $c$ do not intersect and are not interfering
then $\tilde m=m$ and $\tilde c =c$.
\item[II]  If $m$ and $c$ do not intersect but are interfering,
then
$\tilde c= c \cup m_{{\rm comp}}$ and $\tilde m= m \cup m_{{\rm      comp}}$,
where $m_{{\rm comp}}$ (the completion of $m$) is the string obtained by translating to the right (by one unit) the rightmost string of $m$.
\item[III] If $c$ and $m$ intersect, and $c$ continues above $m$ (but not below),
then $\tilde c= c \setminus
(c \cap m)$ and $\tilde m= m \setminus (c \cap m)$.
\item[IV] If $c$ and $m$ intersect and $c$ continues above and below $m$,
then $\tilde c= (c \setminus (c \cap m)) \cup (c \cap m)_U$ and
$\tilde m= (m \setminus (c \cap m)) \cup (c \cap m)_U$.
\end{enumerate}
Observe that to the exception of type II, $\eta$ always
corresponds to the union of the cells of $\lambda$, $c$ and $m$
(there is interference when this union is not a $k$-shape).

\begin{example} Here are examples of the 4 possible types of pushout when
$k=4$.  We indicate cells of the moves with $\bullet$ and cells of the covers
by framed boxes.
\begin{enumerate}
\item[{\rm I}] $m$ and $c$ do not intersect and are not interfering
\begin{equation*}
\begin{diagram}
\node{\tiny{\tableau[sby]{ \\ & \\ & \\ \bl& & & \bl {\, } \\ \bl& \bl& &  \\ \bl& \bl& \bl& \bl&  & &   }}} \arrow{s,l}{c}
 \arrow{e,t}{m} \node{\tiny{\tableau[sby]{ \\ & \\ & \\ \bl& \bl& & \bullet \\ \bl& \bl& &  \\ \bl& \bl& \bl& \bl&  & &   }}} \arrow{s,r,..}{\tilde c \, = \, c}\\
\node{\tiny{\tableau[sby]{ \tf \\ \\ & \\ \bl& & \tf \\ \bl& & & \bl {\, }\\ \bl& \bl& \bl& & \tf \\ \bl& \bl& \bl& \bl& \bl& & &   \tf }}} \arrow{e,t,..}{\tilde m \, = \, m}
\node{\tiny{\tableau[sby]{ \tf \\ \\ & \\ \bl& & \tf \\ \bl& \bl& & \bullet \\ \bl& \bl& \bl& &  \tf \\ \bl& \bl& \bl& \bl& \bl& & &   \tf}}}
\end{diagram}
\end{equation*}
\item[{\rm II}]
$m$ and $c$ do not intersect but are interfering
\begin{equation*}
\begin{diagram}
\node{\tiny\tableau[sby]{ \\ & \\ & \\ \bl& &  \\ \bl& \bl& &  \\ \bl& \bl& \bl&   & &  & \bl {\, } }} \arrow{s,l}{c}
 \arrow{e,t}{m} \node{\tiny\tableau[sby]{ \\ & \\ & \\ \bl& &  \\ \bl& \bl& &  \\ \bl& \bl& \bl&  \bl&  & &   \bullet}} \arrow{s,r,..}{\tilde c }\\
\node{\tiny\tableau[sby]{\tf \\ \\ &\\  \bl& &  \tf \\ \bl&  & \\ \bl& \bl& \bl& &  \tf \\ \bl& \bl& \bl& & & & \bl {\, }& \bl {\, } }} \arrow{e,t,..}{\tilde m }
\node{\tiny\tableau[sby]{ \tf \\ \\ &\\  \bl& &  \tf \\ \bl&  & \\ \bl& \bl& \bl& &  \tf \\ \bl& \bl& \bl& \bl& \bl& &\bullet & \tf \bullet  }}
\end{diagram}
\end{equation*}
\item[{\rm III}] $c$ and $m$ intersect, and $c$ continues above $m$ (but not below)
\begin{equation*}
\begin{diagram}
\node{\tiny\tableau[sby]{ & \\ & \\ & \\ \bl& \bl& &  \\ \bl& \bl& & & & \bl {\, } & \bl {\, } }} \arrow{s,l}{c}
 \arrow{e,t}{m} \node{\tiny\tableau[sby]{  & \\ & \\ & \\ \bl& \bl& &  \\ \bl& \bl& \bl& \bl& & \bullet& \bullet}} \arrow{s,r,..}{\tilde c }\\
\node{\tiny\tableau[sby]{ \tf \\  & \\ & \\ \bl& &\tf \\ \bl& \bl& &  \\ \bl& \bl& \bl& & &\tf & \bl {\, } }} \arrow{e,t,..}{\tilde m }
\node{\tiny\tableau[sby]{  \tf \\ & \\ & \\ \bl& & \tf \\ \bl& \bl& &  \\ \bl& \bl& \bl& \bl& & & \bullet }}
\end{diagram}
\end{equation*}
\item[{\rm IV}] $c$ and $m$ intersect and $c$ continues above and below $m$
\begin{equation*}
\begin{diagram}
\node{\tiny\tableau[sby]{ \\ & \\ & & \bl {\, }\\ \bl&  &  & \bl {\, }\\ \bl& \bl& \bl& & &  }} \arrow{s,l}{c}
 \arrow{e,t}{m} \node{\tiny\tableau[sby]{  \\ & \\ \bl& & \bullet \\ \bl& \bl&  & \bullet \\ \bl& \bl& \bl& & &  }} \arrow{s,r,..}{\tilde c }\\
\node{\tiny\tableau[sby]{ & \tf \\ & \\ & & \bl {\,} & \bl {\, }\\ \bl& \bl&  & \tf \\ \bl& \bl& \bl& \bl&  & &  \tf }} \arrow{e,t,..}{\tilde m }
\node{\tiny\tableau[sby]{ & \tf \\ & \\ \bl& \bl& \bullet & \tf \bullet \\ \bl& \bl&  &  \\ \bl& \bl& \bl& \bl&   & &\tf }}
\end{diagram}
\end{equation*}
\end{enumerate}
\end{example}

Similarly, if $m$ is a column move the possible types are:
\begin{enumerate}
\item[I] If $m$ and $c$ do not intersect and are not interfering,
then $\tilde m=m$ and $\tilde c =c$.
\item[II] If $m$ and $c$ do not intersect but are interfering,
then
then $\tilde c= c \cup m_{{\rm comp}}$ and $\tilde m= m \cup m_{{\rm      comp}}$,
where $m_{{\rm comp}}$ is the string obtained by translating by one unit upward
the uppermost string of $m$.
\item[III] If $c$ and $m$ intersect, and $c$ continues below $m$ (but not above),
then $\tilde c= c \setminus
(c \cap M)$ and $\tilde M= M \setminus (c \cap M)$.
\item[IV] If $c$ and $m$ intersect and $c$ continues above and
below $m$, then
$\tilde c= (c \setminus (c \cap m)) \cup (c \cap m)_R$ and
$\tilde m= (m \setminus (c \cap m)) \cup (c \cap m)_R$.
\end{enumerate}

Using the three steps repeatedly, the pushout algorithm produces
 from any pair $(c,m)$ a pair $(\tilde c, \bq)$, where $\tilde c$ is a maximal
cover and $\bq$ is a path
\begin{equation}
\begin{diagram}
\node{\lambda} \arrow{s,l}{c}
 \arrow{e,t}{m} \node{\nu} \arrow{s,r,..}{\tilde c}\\
\node{\mu} \arrow{e,t,..}{\bq}
\node{\gamma}
\end{diagram}
\end{equation}
Applying this algorithm for every move in a given path $\bp$,
the pushout algorithm thus produces from any pair $(c,\bp)$
a pair $(\tilde c, \tilde \bp)$, where $\tilde c$ is a maximal cover and $\tilde \bp$ is a path
\begin{equation}
\begin{diagram}
\node{\lambda} \arrow{s,l}{c}
 \arrow{e,t}{\bp} \node{\delta} \arrow{s,r,..}{\tilde c}\\
\node{\mu} \arrow{e,t,..}{\tilde \bp}
\node{\omega}
\end{diagram}
\end{equation}
In the special case where $c$ is a reverse-maximal cover, this corresponds
to the bijection in \eqref{bijecstand}
when equivalences of paths are considered.

\section{Weak bijection in the standard case} \label{Sweakbij}
The main reason to construct bijection \eqref{bijecstand} is to
obtain
the following bijection (weak bijection in the standard case):
\begin{equation}
\begin{split}
{\rm SWTab}_{\lambda}^k  & \longrightarrow  \bigsqcup_{\mu \in {\mathcal C}^k}
{\rm SWTab}_{\mu}^{k-1}
\times \overline{\mathcal P}^k(\lambda,\mu) \\
Q^{(k)}  & \longmapsto    (Q^{(k-1)},[\bp])
\end{split}
\end{equation}
where ${\rm SWTab}_{\lambda}^k$ is the set of standard $k$-tableau (or standard
weak tableau) of shape $\lambda$.  This bijection proceeds as follows.
>From Proposition~\ref{propktab}, a $k$-tableau $Q^{(k)}$ of shape $\lambda$ is a sequence
of reverse-maximal covers $c_1=\lambda^{(1)}/\emptyset,\dots,c_n=\lambda/\lambda^{(n-1)}$. Starting with the pair $(c_1,[\emptyset])$, where $\emptyset$ is
the empty move from the empty partition to itself,
the bijection
\eqref{bijecstand} gives a pair $(\tilde c_1,[\bp_1])$
with $\tilde c_1$ a maximal cover.  Then,
the pair $(c_2,[\bp_1])$ leads to the pair $(\tilde c_2,[\bp_2])$,
where $\tilde c_2$ is again a maximal cover.
 Continuing this way we obtain that $Q^{(k)}$ is in correspondence with
a sequence of maximal covers $\tilde c_1,\dots,\tilde c_n$
and an equivalence class of paths $[\bp]$.  This is illustrated
in the following diagram:
\begin{equation} \label{ktabdiag}
\begin{diagram}
\node{\emptyset} \arrow{s,l}{c_1}
 \arrow{e,t}{\emptyset} \node{\emptyset} \arrow{s,r}{\tilde c_1}\\
\node{\lambda^{(1)}} \arrow{s,l}{c_2}
 \arrow{e,t}{[\bp_1]} \node{\mu^{(1)}} \arrow{s,r}{\tilde c_2}\\
\node{\lambda^{(2)}} \arrow{s,l,..}{}
 \arrow{e,t}{[\bp_2]} \node{\mu^{(2)}} \arrow{s,r,..}{}\\
\node{\lambda^{(n-1)}} \arrow{s,l}{c_{n}}
 \arrow{e,t}{[\bp_{n-1}]} \node{\mu^{(n-1)}} \arrow{s,r}{\tilde c_{n}}\\
\node{\lambda} \arrow{e,t}{[\bp]}
\node{\mu}
\end{diagram}
\end{equation}
The sequence of maximal covers $\tilde c_1,\dots,\tilde c_n$ starting at the empty partition is a certain standard $k-1$-tableau $Q^{(k-1)}$ by
Proposition~\ref{propktab}.  Hence $Q^{(k)}$ is in correspondence with
$(Q^{(k-1)},[\bp])$ as claimed.

\section{Missing bijections and what they would entail} \label{missing}
To give some perspective to the present work, we explain our general approach
to prove the Schur positivity of $k$-Schur functions and dual $k$-Schur functions.  We also describe how
this approach  would provide a combinatorial formula for
the $k$-Schur expansion of
Hall-Littlewood polynomial indexed by $k$-bounded partitions, and how it relates to the atoms of \cite{LLM}.

Certain weak and strong tableaux (a.k.a. $k$-tableaux and dual $k$-tableaux)
related respectively to the weak and strong order on Grassmannian permutations of the affine symmetric group
$\tilde S_{k+1}$ were introduced in \cite{LM:cores,LLMS}.
 These tableaux have a certain weight (just as the usual tableaux do) which tells how many times a given letter appears in the tableau.

The graded $k$-Schur functions (depending on a parameter $t$ and indexed by
$k+1$-cores) are defined as \cite{LLMS}
\begin{equation}
s_{\lambda}^{(k)}(x;t) = \sum_{P} t^{{\rm spin} (P)} x^P
\end{equation}
where the sum is over all strong tableaux $P$ of shape $\lambda$,
where spin is a certain statistic on strong tableaux,
and where $x^P=x_1^{\alpha_1} \cdots x_n^{\alpha_n}$
if the weight of $P$ is $(\alpha_1,\dots,\alpha_n)$.

We now provide a similar definition for the
graded dual $k$-Schur functions (indexed again by
$k+1$-cores).  Let
\begin{equation}
{\mathfrak S}_{\lambda}^{(k)}(x;t) = \sum_{Q} t^{{\charge} (Q)} x^Q
\end{equation}
where the sum is over all weak tableaux $Q$ of shape $\lambda$,
and where we recall that the charge of a $k$-tableau was defined in
Section~\ref{Schargektab}.

Computational evidence suggests the conjecture
that the $k$-Schur functions expand positively into $k'$-Schur
functions for $k'>k$:
\begin{equation}\label{surepos}
s_\mu^{(k)}(x;t) = \sum_{\lambda}
 b_{\mu \lambda }^{(k\to k')}(t)\, s_\lambda^{(k')}(x;t)\,,\qquad\text{for
$b_{\lambda \mu}^{(k\to k')}(t) \in\Z_{\ge0}[t]$.}
\end{equation}
For sufficiently large $k'$, it is known that $k'$-Schur functions and Schur functions coincide.  The conjecture thus implies in particular the Schur positivity of $k$-Schur functions.

In order to prove the previous conjecture, it is sufficient
to understand the case when
$k$ and $k'$ differ by one.  An explicit conjecture concerning this case
was stated in \cite{LLMS2} (and proven in the case $t=1$).
 It relates the coefficients $b_{\mu \lambda }^{(k-1\to k)}(t)$ (the branching coefficients) to certain paths in the
poset of $k$-shapes weighted by charge.  Let
\begin{equation}
b^{(k)}_{\mu\la}(t) := \sum_{[\bp]\in \Patheq^k(\la,\mu)} t^{\charge(\bp)}.
\end{equation}

\begin{conjecture} \label{CJ:branch}
For all $\la \in\Core^{k+1}$ and $\mu\in\Core^k$, the special case
$k \mapsto k-1$ and $k' \mapsto k$ of \eqref{surepos} is given by
\begin{equation}
 b_{\mu \lambda }^{(k-1\to k)}(t)  = b^{(k)}_{\mu\la}(t)
\end{equation}
\end{conjecture}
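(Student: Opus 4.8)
The plan is to prove Conjecture~\ref{CJ:branch} by transposing it, via duality, into a statement about the expansion of the graded dual $k$-Schur function ${\mathfrak S}_{\lambda}^{(k)}$ into graded dual $(k-1)$-Schur functions, and then to read off that expansion from a weight-preserving, charge-additive refinement of the weak bijection. Since the $k$-branching coefficients are defined by $s_{\mu}^{(k-1)}(x;t)=\sum_{\la}b_{\mu\la}^{(k-1\to k)}(t)\,s_{\la}^{(k)}(x;t)$, and since the $k$-Schur and dual $k$-Schur functions form dual bases (established at $t=1$ in \cite{LLMS}), the same coefficients should govern the dual expansion
\begin{equation}
{\mathfrak S}_{\lambda}^{(k)}(x;t)=\sum_{\mu\in\Core^k}b_{\mu\la}^{(k-1\to k)}(t)\,{\mathfrak S}_{\mu}^{(k-1)}(x;t).
\end{equation}
The first ingredient I would therefore establish is that the charge-defined ${\mathfrak S}_{\lambda}^{(k)}$ is the genuine \emph{graded} dual of the spin-defined $s_{\la}^{(k)}$, so that this transposition remains valid at arbitrary $t$ and not only at $t=1$; this is precisely the structural role the charge of Section~\ref{Schargektab} is designed to play.

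Second, I would compute the right-hand side bijectively. Writing ${\mathfrak S}_{\lambda}^{(k)}(x;t)=\sum_{Q^{(k)}}t^{\charge(Q^{(k)})}x^{Q^{(k)}}$, summed over $k$-tableaux of shape $\la$, the weak bijection of \cite{LLMS2} sends each $Q^{(k)}$ to a pair $(Q^{(k-1)},[\bp])$ with $Q^{(k-1)}$ a $(k-1)$-tableau of some shape $\mu\in\Core^k$ and $[\bp]\in\Patheq^k(\la,\mu)$. Provided this bijection preserves the $x$-weight, so that $x^{Q^{(k)}}=x^{Q^{(k-1)}}$, and splits the charge additively, $\charge(Q^{(k)})=\charge(Q^{(k-1)})+\charge(\bp)$, grouping the terms first by $\mu$ and then by $Q^{(k-1)}$ collapses the sum to $\sum_{\mu}\big(\sum_{[\bp]\in\Patheq^k(\la,\mu)}t^{\charge(\bp)}\big)\,{\mathfrak S}_{\mu}^{(k-1)}(x;t)$, which is exactly $\sum_{\mu}b^{(k)}_{\mu\la}(t)\,{\mathfrak S}_{\mu}^{(k-1)}(x;t)$. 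Comparing with the dual expansion above and invoking the linear independence of the $\{{\mathfrak S}_{\mu}^{(k-1)}\}$ then yields $b_{\mu\la}^{(k-1\to k)}(t)=b^{(k)}_{\mu\la}(t)$, the desired identity.

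Everything thus reduces to the weight-preserving, charge-additive property of the weak bijection for $k$-tableaux of \emph{arbitrary} weight. Theorem~\ref{theo} supplies exactly this in the standard case, where weight-preservation is automatic and additivity is the stated identity. The hard part, and the principal obstacle, is the extension to non-standard weights, which confronts two difficulties simultaneously. First, the definition of $\charge$ on a non-standard $k$-tableau rests on the conjectural Lascoux-Sch\"utzenberger type action of the symmetric group on $k$-tableaux, so one must prove that this action is well-defined and obeys the Coxeter relations (only shown to be a shape-respecting involution in \cite{MEtesis}) before charge is even unambiguous. Second, one must show that the weak bijection intertwines this action with the corresponding action on the target pairs $(Q^{(k-1)},[\bp])$, so that additivity descends from the dominant weights, where charge is defined directly, to all weights; this intertwining is precisely what is noted as unproven in Section~\ref{Schargektab}, and it is where I expect the genuine difficulty to lie.

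A route I would attempt first, to sidestep the full non-standard theory, is a standardization argument in the spirit of the classical reduction of Kostka--Foulkes polynomials to standard tableaux: express both $b_{\mu\la}^{(k-1\to k)}(t)$ and $b^{(k)}_{\mu\la}(t)$ as charge-weighted sums over standardizations and then appeal directly to Theorem~\ref{theo}. Carrying this out demands a standardization map on $k$-tableaux compatible with charge and with the weak bijection, a matching descent statistic on paths, and a proof that the map respects the diamond equivalences defining $\Patheq^k(\la,\mu)$. Such a map appears substantial to construct, but it has the merit of localizing all the new content in the standard-case result already proven here, thereby converting Conjecture~\ref{CJ:branch} into a purely combinatorial statement about standardization that is independent of the (still open) well-definedness of the symmetric-group action.
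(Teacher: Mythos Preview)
The statement is a \emph{conjecture} in the paper; there is no proof to compare against. What the paper does offer is a programme (Section~\ref{missing}) explaining which bijections would imply it, and that programme is \emph{not} the one you outline. The paper's route to Conjecture~\ref{CJ:branch} is through the \emph{strong} bijection \eqref{bij2} with spin-compatibility \eqref{cond2}, which the authors describe as ``a wide open problem''. The weak bijection with charge-compatibility \eqref{cond1}, which is the object of Theorem~\ref{theo} and which you are invoking, is the paper's route to the \emph{dual} Conjecture~\ref{CJ:branchdual}, not to Conjecture~\ref{CJ:branch}.

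Your proposal attempts to bridge the two via graded duality, but this step has real gaps. First, the assertion that the charge-defined $\mathfrak{S}_\lambda^{(k)}$ is the graded dual of the spin-defined $s_\lambda^{(k)}$ is itself unproven at generic $t$ (only the $t=1$ duality is available from \cite{LLMS}); establishing it would require something like a graded affine Cauchy identity, which is at least as deep as either conjecture and arguably subsumes both. Second, even granting the non-standard charge-additivity you identify as the hard part, the weak bijection only handles $k$-tableaux of $(k{-}1)$-bounded weight, so the expansion you would obtain is \eqref{dualsurepos}, valid only \emph{modulo} the ideal $I_{k-1}$; you cannot read off the coefficients $b_{\mu\lambda}^{(k-1\to k)}(t)$ by linear independence of the $\mathfrak{S}_\mu^{(k-1)}$ without an additional argument that the quotient loses no information, and you do not address this. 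Your standardization idea is reasonable as a strategy for the non-standard extension of Theorem~\ref{theo}, but that would at best yield Conjecture~\ref{CJ:branchdual}; it does not, by itself, close the duality gap needed for Conjecture~\ref{CJ:branch}.
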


It is also conjectured that
the dual $k$-Schur functions
expand positively  into $k'$-Schur functions, but this time for $k'< k$:
\begin{equation}\label{dualsurepos}
{\mathfrak S}_\mu^{(k)}(x;t) = \sum_{\lambda}
 b_{\lambda \mu }^{(k'\to k)}(t)\, {\mathfrak S}_\lambda^{(k')}(x;t) \mod I_{k'}
\end{equation}
where $I_{k'}$ is the ideal generated by the monomial symmetric functions
$m_{\rho}$ such that $\rho_1 > k'$.  We stress that
the conjecture makes the stronger claim that the coefficients are the same (up to transposition) as those in the decompositions \eqref{surepos}.   This conjecture,
also shown to hold when $t=1$ in \cite{LLMS2},
would follow again from the case where $k$ and $k'$ differ by one.
\begin{conjecture} \label{CJ:branchdual}
For all $\mu \in\Core^{k+1}$ and $\la\in\Core^k$, the special case
$k' \mapsto k-1$ of \eqref{surepos} is given by
\begin{equation} \label{E:gradedbranchingdual}
  b_{\la \mu}^{(k-1\to k)}(t)  = b^{(k)}_{\la \mu}(t)
\end{equation}
\end{conjecture}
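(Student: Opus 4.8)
The plan is to derive Conjecture~\ref{CJ:branchdual} from a non-standard generalization of Theorem~\ref{theo}. At $t=1$ the statement is already known \cite{LLMS2}, so what remains for general $t$ is a refinement of the $t=1$ bijection by the charge statistic. Since the dual $(k-1)$-Schur functions $\{{\mathfrak S}^{(k-1)}_{\la}(x;t):\la\in\Core^k\}$ form a basis of $\Lambda/I_{k-1}$, the coefficient $b^{(k-1\to k)}_{\la\mu}(t)$ of \eqref{dualsurepos} is determined by the expansion of ${\mathfrak S}^{(k)}_{\mu}(x;t)$, so Conjecture~\ref{CJ:branchdual} is equivalent to the generating-function identity
\[
{\mathfrak S}_{\mu}^{(k)}(x;t) \;=\; \sum_{\la\in\Core^k} b^{(k)}_{\la\mu}(t)\,{\mathfrak S}_{\la}^{(k-1)}(x;t) \mod I_{k-1}
\]
(its counterpart on the side of $k$-Schur functions, Conjecture~\ref{CJ:branch}, being related to it through the $k$-Schur/dual $k$-Schur pairing). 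Unwinding the definitions of ${\mathfrak S}^{(k)}_{\mu}(x;t)$ and of $b^{(k)}_{\la\mu}(t)$, this identity amounts to producing a weight-preserving bijection $Q^{(k)}\mapsto(Q^{(k-1)},[\bp])$ between $k$-tableaux $Q^{(k)}$ of shape $\mu$ and pairs consisting of a $(k-1)$-tableau $Q^{(k-1)}$ of some shape $\la\in\Core^k$ together with a class $[\bp]\in\Patheq^k(\mu,\la)$, satisfying $\charge(Q^{(k)})=\charge(Q^{(k-1)})+\charge(\bp)$.

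The required bijection is already available: it is the weak bijection in the non-standard case, obtained in \cite{LLMS2} from the pushout algorithm for strips of arbitrary rank, of which Sections~\ref{SecPush}--\ref{Sweakbij} record only the rank-$1$ (cover) case. It is weight-preserving, which is exactly what recovers the $t=1$ branching identity. Hence the whole of the remaining work is the charge identity $\charge(Q^{(k)})=\charge(Q^{(k-1)})+\charge(\bp)$ for $k$-tableaux of arbitrary weight, i.e.\ the non-standard version of Theorem~\ref{theo}.

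To prove that identity I would first reduce to the dominant-weight case. The charge of a $k$-tableau is defined so as to be invariant under the conjectural Lascoux--Sch\"utzenberger action $\sigma_i$, and one expects the weak bijection to intertwine the $\sigma_i$ on $k$-tableaux with the $\sigma_i$ on $(k-1)$-tableaux while leaving the class $[\bp]$ fixed; the general identity would then follow from the dominant case. For dominant weight, $\charge(Q^{(k)})$ is computed through the standard subtableaux carried by the words $w_1,\dots,w_{\alpha_1}$, so the aim would be to show that the weak bijection is compatible with this slicing — each slice being governed by the rank-$1$ result, Theorem~\ref{theo}, via the recursion \eqref{defcharge1} — and that the charges of the moves produced slice by slice sum to $\charge(\bp)$. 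The delicate point is that the weak bijection acts strip by strip on $Q^{(k)}$, not slice by slice, so these two decompositions have to be reconciled.

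The hard part, as the authors flag in Section~\ref{missing}, is twofold and self-referential. First, the reduction to dominant weight is not yet legitimate, because the $\sigma_i$ on $k$-tableaux are not known to satisfy the Coxeter relations. Second, and more fundamentally, controlling how the pushout interacts with $\sigma_i$ — equivalently, with the diamond equivalences \eqref{E:elemequiv} that make only $[\bp]$, and not $\bp$, well-defined — requires a Lascoux--Sch\"utzenberger-type action of the symmetric group on $k$-shape tableaux that commutes with the pushout algorithm and restricts to the known action on $k$-tableaux. Producing that action, and establishing its basic properties, is the genuine obstacle; once it is in hand, the non-standard charge identity — and with it Conjectures~\ref{CJ:branch} and~\ref{CJ:branchdual} — should follow by bookkeeping built on \cite{LLMS2} and Theorem~\ref{theo}.
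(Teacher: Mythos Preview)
The statement you are attempting to prove is a \emph{conjecture} in the paper, not a theorem: the paper does not contain a proof of it. What the paper does is exactly what you have written up --- it shows (in Section~\ref{missing}) that Conjecture~\ref{CJ:branchdual} would follow from the weak bijection of \cite{LLMS2} together with the charge compatibility \eqref{cond1} for arbitrary weight, and it proves that compatibility only in the standard case (Theorem~\ref{theo}). The conclusion section then discusses precisely the two obstructions you name: the missing Coxeter relations for the $\sigma_i$ on $k$-tableaux, and the lack of a Lascoux--Sch\"utzenberger-type action on general $k$-shape tableaux commuting with the pushout.

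Your proposal is therefore not a proof but a faithful summary of the paper's own conditional argument and of the gaps the authors themselves flag as open. You explicitly concede this in your final paragraph (``Producing that action\dots is the genuine obstacle''). Nothing in your write-up closes that gap: you do not construct the symmetric-group action on $k$-shape tableaux, you do not establish the Coxeter relations for the $\sigma_i$, and you do not verify that the non-standard pushout respects the slicing into words $w_1,\dots,w_{\alpha_1}$. Until one of those is done, Conjecture~\ref{CJ:branchdual} remains open, and your text is a proof plan rather than a proof.
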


We now describe an approach to prove Conjectures~\ref{CJ:branch}
and \ref{CJ:branchdual}.  Let ${\rm WTab}_{\lambda}^k$ and ${\rm STab}_{\lambda}^k$
stand respectively for the set of $k$-tableaux and dual $k$-tableaux of shape $\lambda$.  As we will see, it would be immediate that the conjectures hold if one could find two weight-preserving bijections (respectively the weak and strong bijections)
\begin{equation} \label{bij1}
\begin{split}
{\rm WTab}_{\lambda}^k  & \longrightarrow  \bigsqcup_{\mu \in {\mathcal C}^k}
{\rm WTab}_{\mu}^{k-1}
\times \overline{\mathcal P}^k(\lambda,\mu) \\
Q^{(k)}  & \longmapsto    (Q^{(k-1)},[\bp])
\end{split}
\end{equation}
and
\begin{equation}\label{bij2}
\begin{split}
{\rm STab}_{\lambda}^{k-1}  & \longrightarrow  \bigsqcup_{\mu \in {\mathcal C}^{k+1}}
{\rm STab}_{\mu}^{k}
\times \overline{\mathcal P}^k(\mu,\lambda) \\
P^{(k-1)}  & \longmapsto    (P^{(k)},[\bp])
\end{split}
\end{equation}
such that
\begin{equation}\label{cond1}
{\rm ch}(Q^{(k)})= {\rm ch}(\bp) + {\rm ch}(Q^{(k-1)})
\end{equation}
and
\begin{equation}\label{cond2}
{\rm spin}(P^{(k-1)})= {\rm ch}(\bp) + {\rm spin}(P^{(k)})
\end{equation}
We should add that in the weak bijection
the weight of $Q^{(k)}$ in \eqref{bij1} needs to be $k-1$-bounded
(that is no letter can occur more than $k-1$ times).
The weak bijection
was constructed in \cite{LLMS2}.  The aim of this article is to show
(Theorem~\ref{theo}) that
it satisfies \eqref{cond1}
in the standard case (we will discuss in the conclusion why the semi-standard
case is still out of reach).  The obtention of
the strong bijection \eqref{bij2}
satisfying \eqref{cond2} is a wide open problem.

We now show how the weak bijection implies
Conjecture~\ref{CJ:branchdual} if it satisfies \eqref{cond1}
(the proof that the strong bijection  implies
Conjecture~\ref{CJ:branch} if it satisfies \eqref{cond2} is practically
identical and will thus be omitted).   We have
from \eqref{bij1}  and \eqref{cond1} that
\begin{equation}
 \sum_{Q^{(k)} \in {\rm WTab}_{\lambda}^k} t^{{\rm ch}(Q^{(k)})} x^{Q^{(k)}}
 =\sum_{\mu} \sum_{[\bp]\in \Patheq^k(\la,\mu)} \sum_{Q^{(k-1)} \in {\rm WTab}_{\mu}^{k-1}}
t^{{\rm ch}(Q^{(k-1)}) + {\rm ch}(\bp) } x^{Q^{(k-1)}} \mod I_{k-1}
\end{equation}
where the equality only holds modulo
$I_{k-1}$ since the leftmost sum is only over $k$-tableaux $Q^{(k)}$
of $k-1$-bounded weight.  Therefore, it immediately follows that
\begin{equation}
{\mathfrak S}_{\lambda}^{(k)}(x;t) = \sum_\mu
\sum_{[\bp]\in \Patheq^k(\la,\mu)}
t^{{\rm ch}(\bp) } {\mathfrak S}_{\mu}^{(k-1)}(x;t) \mod I_{k-1}
\end{equation}
which is equivalent to Conjecture~\ref{CJ:branchdual}.

Another interesting consequence of the weak and strong
bijections would be the following explicit
expression for the $k$-Schur expansion of Hall-Littlewood polynomials
indexed by $k$-bounded partitions:
\begin{equation} \label{HLkschur}
H_{\lambda}(x;t) = \sum_{Q^{(k)}} t^{{\rm ch}(Q^{(k)})} s_{{{\rm sh} (Q^{(k)})}}^{(k)}(x;t)
\end{equation}
where the sum is over all $k$-tableaux of weight $\lambda$.
This formula generalizes a well-known result of
 Lascoux and Sch\"utzenberger providing
a $t$-statistic on tableaux for the Kostka-Foulkes polynomials \cite{LScharge}.

This is shown in the following way.  Let $n=|\lambda|$.
If we iterate  the weak bijection
\begin{equation}
Q \mapsto (Q^{(n-1)},[\bp_{n}]), \, \quad
 Q^{(n-1)} \mapsto (Q^{(n-2)},[\bp_{n-1}]), \quad
\dots \, , \, \quad Q^{(k+1)} \mapsto (Q^{(k)},[\bp_{k+1}])
\,
\end{equation}
 we can establish a bijective correspondence
\begin{equation} \label{bijn}
\begin{split}
{\rm WTab}_{\lambda}^n  & \longrightarrow  \bigsqcup_{
\mu \in {\mathcal C}^{k+1}}
{\rm WTab}_{\mu}^{k}
\times \overline{\mathcal P}^{n \to k}(\lambda,\mu)
\\
Q  & \longmapsto  (Q^{(k)},[\bp_{n}],\dots,[\bp_{k+1}])
\end{split}
\end{equation}
where $\overline{\mathcal P}^{n \to k}(\lambda,\mu)$ is the
set of all sequences
$[\bp_{n}],\dots,[\bp_{k+1}]$ such that
$$ \bp_n \in {\mathcal P}^{n}(\lambda,\mu^{(n-1)}),  \bp_{n-1} \in {\mathcal P}^{n-1}(\mu^{(n-1)},\mu^{(n-2)}),
\dots,
\bp_{k+1} \in
{\mathcal P}^{k+1}(\mu^{(k+1)},\mu)$$
Moreover, the previous correspondence satisfies
$$
{\rm ch}(Q)= {\rm ch}(Q^{(k)})+{\rm ch}( \bp_n) + \cdots + {\rm ch}( \bp_{k+1}) \,
$$
which implies that
\begin{equation}
H_{\lambda}(x;t) = \sum_{Q} t^{{\rm ch}(Q)} s_{{{\rm sh} (Q)}} =
 \sum_{Q^{(k)}}  t^{{\rm ch}(Q^{(k)})} \sum_{\mu}
\sum_{[\bp] \in \overline{\mathcal P}^{n \to k}(\mu,{\rm sh}(Q^{(k)}))}
 t^{{\rm ch}( \bp)} s_{\mu}
\end{equation}
where for short we use $[\bp]$ for $[\bp_n],\dots,[\bp_{k+1}]$, and
$\charge(\bp)=\charge(\bp_n)+\cdots +\charge(\bp_{k+1})$.
Equation \eqref{HLkschur} then follows since repeated applications of Conjecture~\ref{CJ:branch} gives
\begin{equation}
\sum_{\mu}\sum_{[\bp]  \in \overline{\mathcal P}^{n \to k}(\mu,{\rm sh}(Q^{(k)}))}
 t^{{\rm ch}(\bp)} s_{{{\rm sh} (Q)}}(x) = s_{{\rm sh}(Q^{(k)})}^{(k)}(x;t)
\end{equation}

Finally, we discuss the connection between the set $\mathcal A_{\mu}^{(k)}$
(called atoms in
\cite{LLM}) and the weak bijection \eqref{bij1}.  When $
Q \mapsto (Q^{(k)},[\bp_{n}],\dots,[\bp_{k+1}])$, we say that $Q^{(k)}$ is the $k$-tableau associated to $Q$ or that $Q$ maps to $Q^{(k)}$.
It was conjectured in \cite{LLMS2} that the set $\mathcal A^{(k)}_{\mu}$ appearing in \eqref{eqatom}
is given by the set
of tableaux that map to a certain $k$-tableaux $T_{\mu}^{(k)}$:
\begin{conjecture}  Let $\rho$ be the  unique element of $\Core^{k+1}$
such that $\rs(\rho)=\mu$, and let
$T_{\mu}^{(k)}$ be the unique $k$-tableau of
weight $\mu$ and shape $\rho$ (see \cite{LM:cores}).
Then
\begin{equation}
\mathcal A_\mu^{(k)} = \left\{ T {\rm ~of~weight~} \mu \, \big| \, T_{\mu}^{(k)}
{\rm ~is~the~} k{\text -}{\rm tableau~associated~to~} T \right\}
\,.
\end{equation}
\end{conjecture}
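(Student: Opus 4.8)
The plan is to derive this conjecture from \eqref{HLkschur0} --- equivalently, from the classical theorem of Lascoux and Sch\"utzenberger, which in this language reads $H_\mu(x;t)=\sum_{T}t^{\charge(T)}s_{\shape(T)}(x)$ with the sum over all semistandard tableaux of content $\mu$ --- together with the weak bijection and the branching Conjecture~\ref{CJ:branch}. Concretely, I would iterate the weak bijection \eqref{bij1} from ordinary tableaux of content $\mu$ down to $k$-tableaux of weight $\mu$, as in \eqref{bijn}, so that every $T$ of content $\mu$ corresponds to a pair $(T^{(k)},[\bp])$ with $T^{(k)}$ a $k$-tableau of weight $\mu$, with $[\bp]\in\overline{\mathcal P}^{n\to k}(\shape(T),\mathrm{sh}(T^{(k)}))$, and with $\charge(T)=\charge(T^{(k)})+\charge(\bp)$ (this step needs property \eqref{cond1}). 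Grouping the Lascoux--Sch\"utzenberger sum by the $k$-tableau $T^{(k)}$ and collapsing each inner sum over paths via Conjecture~\ref{CJ:branch}, exactly as in the computation at the end of Section~\ref{missing}, yields the $k$-Schur expansion $H_\mu(x;t)=\sum_{\sigma\in\Core^{k+1}}\big(\sum_{T^{(k)}:\,\mathrm{sh}(T^{(k)})=\sigma,\ \wt(T^{(k)})=\mu}t^{\charge(T^{(k)})}\big)\,s_\sigma^{(k)}(x;t)$.

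The point is to run the same manipulation inside the single fiber over $T_\mu^{(k)}$. Writing $\rho=\mathrm{sh}(T_\mu^{(k)})$, the $(k+1)$-core with $\rs(\rho)=\mu$, this gives
\begin{equation}
\sum_{T\,:\,T\mapsto T_\mu^{(k)}}t^{\charge(T)}\,s_{\shape(T)}(x)\;=\;t^{\charge(T_\mu^{(k)})}\sum_{\nu}\,\sum_{[\bp]\in\overline{\mathcal P}^{n\to k}(\nu,\rho)}t^{\charge(\bp)}\,s_\nu(x)\;=\;t^{\charge(T_\mu^{(k)})}\,s_\rho^{(k)}(x;t),
\end{equation}
the last equality being Conjecture~\ref{CJ:branch} applied repeatedly. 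It then remains to verify that $\charge(T_\mu^{(k)})=0$, which I expect to follow directly from the recursive definition of charge in Section~\ref{Schargektab}, $T_\mu^{(k)}$ being the ground state among $k$-tableaux of its weight. Comparing the resulting identity $s_\rho^{(k)}(x;t)=\sum_{T\mapsto T_\mu^{(k)}}t^{\charge(T)}s_{\shape(T)}(x)$ (with $s_\rho^{(k)}=s_\mu^{(k)}$ under the core/$k$-bounded-partition correspondence) to the defining relation \eqref{eqatom} would then identify the fiber $\{T\text{ of weight }\mu:T\mapsto T_\mu^{(k)}\}$ with the atom set $\mathcal A_\mu^{(k)}$, which is the conjecture.

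Two points are the real difficulty. First, two of the three ingredients are themselves open: this article establishes property \eqref{cond1} only in the standard case (Theorem~\ref{theo}), while the argument above needs its semistandard form, which in turn seems to rest on a well-defined Lascoux--Sch\"utzenberger action of the symmetric group on $k$-tableaux compatible with the weak bijection --- the obstruction emphasized in the conclusion --- and Conjecture~\ref{CJ:branch} is unproven for generic $t$. Second, and more structurally, \eqref{eqatom} pins down $\mathcal A_\mu^{(k)}$ only through the multiset of pairs $(\charge(T),\shape(T))$ it produces, so to conclude that the atom set of \cite{LLM} coincides, \emph{as a set of tableaux}, with the weak-bijection fiber --- and not merely that the two have the same charge generating function in each Schur degree --- one must reconcile the fiber with the independent catabolism description of the atoms, and appeal to (or prove) the uniqueness built into the original $k$-atom conjecture. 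I expect that this reconciliation, together with checking that the iterated weak bijection restricts to a bijection between the fiber over $T_\mu^{(k)}$ and the path families counted by Conjecture~\ref{CJ:branch}, will be the technically heaviest part.
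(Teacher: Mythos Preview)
The statement you are trying to prove is a \emph{Conjecture}, and the paper does not offer a proof of it. It is stated in Section~\ref{missing} as an open problem relating the atom sets of \cite{LLM} to the fibers of the iterated weak bijection, and the paragraph immediately following it is not a proof but a discussion of what the paper's results (together with the other open conjectures) would imply about these fibers --- namely, that distinct fibers over $k$-tableaux of the same shape give Schur-expansion generating functions differing only by a power of $t$, i.e.\ are ``copies of atoms'' in the sense of \cite{LLM}.

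Your proposal is in fact a careful reconstruction of exactly this heuristic, and you correctly flag the two genuine obstructions. First, the argument is conditional on Conjecture~\ref{CJ:branch} and on the semistandard version of \eqref{cond1}, both of which are open (the paper proves \eqref{cond1} only in the standard case). Second --- and this is the more serious structural gap you identify --- even granting those inputs, the comparison with \eqref{eqatom} only matches the charge-weighted Schur generating functions of the fiber and of $\mathcal A_\mu^{(k)}$; it does not by itself identify the two as sets of tableaux, since $\mathcal A_\mu^{(k)}$ in \cite{LLM} is defined independently via catabolism. So your proposal is not a proof, but neither is anything in the paper: what you have written is essentially the paper's own motivation for stating the conjecture, together with an honest accounting of why it remains a conjecture.
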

Our results would imply that if $Q_1^{(k)}$ and $Q^{(k)}_2$
are two
$k$-tableaux of the same shape, then the sets of tableaux $\mathcal A$ and $\mathcal B$
that respectively map to $Q_1^{(k)}$ and $Q^{(k)}_2$ are each in correspondence with the elements $[\bp_{n}],\dots,[\bp_{k+1}]$
of $\overline{\mathcal P}^{n \to k}(\lambda,\mu)$.  Hence
\begin{equation}
t^{-\charge(Q_1^{(k)})}
\sum_{T\in \mathcal A} t^{\charge(T)} s_{{\rm shape}(T)} = t^{-\charge(Q_2^{(k)})}
\sum_{T'\in \mathcal B} t^{\charge(T')} s_{{\rm shape}(T')}
\end{equation}
given that
\begin{equation}
{\rm ch}(T)- {\rm ch}(Q_1^{(k)})= {\rm ch}(T')- {\rm ch}(Q_2^{(k)})
={\rm ch}( \bp_n) + \cdots + {\rm ch}( \bp_{k+1}) \, \end{equation}
The two symmetric functions
$\sum_{T\in \mathcal A} t^{\charge(T)} s_{{\rm shape}(T)}$ and
$\sum_{T'\in \mathcal B} t^{\charge(T')} s_{{\rm shape}(T')}$ thus only differ
by a power of $t$.
In the language of \cite{LLM}, these are instances of copies of atoms (which
in \cite {LLM} where only conjectured to exist).

\section{(Co)charge of a standard $k$-shape tableau} \label{Schargekshape}

We now generalize the notions of charge and cocharge
to standard $k$-shape tableau.
We will then establish a relation
between charge and cocharge (see Proposition~\ref{propchargecocharge})
that will prove very useful in the proof
of the compatibility between (co)charge and the weak bijection.

\subsection{$k$-connectedness}
Let $r$ and $r'$ (with $r>r'$)
be rows of the $k$-shape $\lambda$ that each have an
addable corner.
We say that
$r'$ is the $k$-connected row below row $r$ (or simply that $r$ and $r'$
are $k$-connected rows) if $r'$ is the lowest row such
that the distance between
the addable corners in row $r$ and $r'$ is not larger than $k+1$.
If the distance between the addable corners in rows $r$ and $r'$ is $k$ or $k+1$
then $r$ and $r'$ are said to be contiguously connected.
We say that $r_1,\dots,r_m$ form a sequence of
$k$-connected rows of length $m$ if $r_{i+1}$ is the $k$-connected row below
row $r_i$ for all $i=1,\dots,m-1$.

\begin{example}
Let $k=5$  and $\lambda = \tableau[pcy]{ \\ & \\ \bl& & & \\ \bl& \bl& & & & \\ \bl& \bl& \bl& \bl& & & &  \\ \bl& \bl& \bl& \bl& \bl& \bl& \bl& \bl& & & & }$.
The pairs of 5-connected rows are: 7 and 5; 5 and 3; 3 and 2; 2 and 1; 6 and 4; 4 and 2. Moreover  rows 4 and 2 are contiguously connected,  as are rows 5 and 3.
Rows $7,5,3,1$ form a sequence of
5-connected rows of length 4.
Observe that two distinct rows can have the same
 $k$-connected row below:
for instance row 2 is the 5-connected row below rows 3 and 4.
\end{example}

Let $r$ and $r'$ (with $r>r'$)
be rows of the $k$-shape $\lambda$ that each have an
addable corner.  Define  $[r,r']_k$ to be equal to the length
of the longest sequence of $k$-connected rows $r_1, r_2, \dots, r_m$
such that $r_1=r$ and $r_m \geq r'$.   We define $(r,r']_k$, $[r,r')_k$ and $(r,r')_k$ in the same fashion (not counting row $r$ or $r'$ according to whether the interval is closed or open).

\subsection{Definition of charge and cocharge}\label{subchargeco}
 The charge of a $k$-shape tableau $T$ on $N$ letters is
\begin{equation}
{\charge} (T) = \sum_{n=1}^N \charge(n)
\end{equation}
where $\charge(1)=0$, and where $\charge(n)$ for $n>1$ is
defined recursively
in the following way.  Let $r$ be the row above that of
$n_-^\uparrow$ and $r'$ be the row of $n^\uparrow$. Then
\begin{equation} \label{defcharge2}
   \charge(n) =   \left \{ \begin{array}{l@{\qquad }l} \\
    \charge(n-1) +[r,r')_k  & {\rm if} \quad  r\geq r' \\ \\
    \charge(n-1) -(r',r]_k & {\rm if} \quad r < r'
   \end{array}
    \right .
\end{equation}
where $[r,r')_k$ and $(r',r]_k$ are calculated using the $k$-shape corresponding to the shape of $T_{n-1}$.  We also stress that $[r,r')_k=0$ if $r=r'$.

\begin{example} \label{exa36}
Let $k=4$ and \quad $T = {\small{\tableau[scY]{9|7|4,6,9|3,5,7|1,2,4,6,8,9}}}$.

\noindent The sequence of $k$-shapes with the corresponding rows $r$ and $r'$
are

$${\small{\tableau[scY]{ \bl r|,\bl r'}} } \qquad  \qquad   {\small{\tableau[scY]{\bl r\phantom{'}, \bl r'|,}} }\qquad  \qquad   {\small{\tableau[scY]{\bl r\phantom{'},\bl r'| | ,}}} \qquad  \qquad  {\small{\tableau[scY]{\bl r| |,\bl r'|\bl ,,}}} \qquad  \qquad    {\small{\tableau[scY]{,\bl r\phantom{'}, \bl r'|,|\bl ,,}}} $$

$$ {\small{\tableau[scY]{\bl r\phantom{'},\bl r' |,|,|\bl ,\bl ,,}}} \qquad  \qquad   {\small{\tableau[scY]{\bl r\phantom{'} | |,|\bl ,,|\bl,\bl,,,\bl r'}}} \qquad  \qquad
{\small{\tableau[scY]{\bl r'||,|\bl,,,\bl r\phantom{'}  |\bl,\bl,,,}}}
$$

\noindent Therefore
 $\charge(1) = 0 ; \quad \charge(2) = \charge(1) + [2,1)_4=1; \quad \charge(3) = \charge(2) - (2,2]_4=1; \quad \charge(4) = \charge(3) - (3,3]_4=1; \quad \charge(5) = \charge(4) + [4,2)_4=2; \quad \charge(6) = \charge(5) - (3,3]_4=2; \quad \charge(7) = \charge(6) - (4,4]_4=2; \quad \charge(8) = \charge(7) + [5,1)_4=4; \quad \charge(9) = \charge(8) - (5,2]_4=3 $.  This gives
$\charge(T) = \sum_{n=1}^9 \charge(n)= 0+1+1+1+2+2+2+4+3 = 16$.
 \end{example}

Similarly, the cocharge of a $k$-shape tableau $T$ on $N$ letters is
\begin{equation}
{\cocharge} (T) = \sum_{n=1}^N \cocharge(n)
\end{equation}
where $\cocharge(1)=0$, and where $\cocharge(n)$ for $n>1$ is defined recursively
in the following way.  Let $r$ be the row above that of
$n_-^\downarrow$ and $r'$ be the row of $n^\downarrow$.  Then
\begin{equation}
  \cocharge(n)=   \left \{ \begin{array}{l@{\qquad }l} \\
    \cocharge(n-1)-(r,r')_k   & {\rm if} \quad  r>r' \\ \\
    \cocharge(n-1) + [r',r]_k & {\rm if} \quad r \leq r' \\ \\
   \end{array}
    \right .
\end{equation}
where $(r,r')_k$ and $[r',r]_k$ are again calculated using the $k$-shape corresponding to the shape of $T_{n-1}$.

\begin{example}\label{exa37} Using the same tableau as in Example~\ref{exa36},
we get the following sequence of $k$-shapes (with rows $r$ and $r'$ identified)

$${\tiny{\tableau[scY]{ \bl r|,\bl r'}} } \qquad  \quad   {\tiny{\tableau[scY]{\bl r\phantom{'}, \bl r'|,}} }\qquad  \quad   {\tiny{\tableau[scY]{\bl r\phantom{'}| | ,,\bl r'}}} \qquad  \quad  {\tiny{\tableau[scY]{ |,\bl r\phantom{'},\bl r'|\bl ,,}}} \qquad  \quad    {\tiny{\tableau[scY]{,\bl r\phantom{'}|,|\bl ,,,\bl r'}}} \qquad \quad {\tiny{\tableau[scY]{,|,,\bl r\phantom{'}, \bl r'|\bl ,\bl ,,}}} \qquad  \quad   {\tiny{\tableau[scY]{ |,,\bl r\phantom{'}|\bl ,,|\bl,\bl,,,\bl r'}}} \quad  \qquad
{\tiny{\tableau[scY]{|,|\bl,,,\bl r\phantom{'}  |\bl,\bl,,,,\bl r'}}}
$$

\noindent We thus have $\cocharge(1) = 0 ; \quad \cocharge(2) = \cocharge(1) - (2,1)_4=0; \quad \cocharge(3) = \cocharge(2) + [2,2]_4=1; \quad \cocharge(4) = \cocharge(3) - (3,1)_4=1; \quad \cocharge(5) = \cocharge(4) + [2,2]_4=2; \quad \cocharge(6) = \cocharge(5) - (3,1)_4=2; \quad \cocharge(7) = \cocharge(6) + [2,2]_4=3; \quad \cocharge(8) = \cocharge(7) - (3,1)_4=3; \quad \cocharge(9) = \cocharge(8) - (2,1)_4=3$.  Therefore,
$\cocharge(T) = \sum_{n=1}^9 \cocharge(n)= 0+0+1+1+2+2+3+3+3 = 15$.
\end{example}

We now show that the definition of (co)charge of a $k$-shape
tableau actually extends that of (co)charge of a $k$-tableau.
\begin{proposition} Let $T$ be a standard $k$-shape tableau that
is also a $k$-tableau.  Then definitions \eqref{defcharge1} and \eqref{defcharge2} of charge coincide (and similarly for cocharge).
\end{proposition}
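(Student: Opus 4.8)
The plan is to prove the equality by induction on the number of letters, which reduces it to an identity between the two prescriptions for a single increment $\charge(n)-\charge(n-1)$. Both \eqref{defcharge1} and \eqref{defcharge2} define $\charge(T)=\sum_{n}\charge(n)$ with $\charge(1)=0$, and in each the value $\charge(n)$ is determined by $\charge(n-1)$ together with data attached only to the cells $\nup$ and $n_-^{\uparrow}=(n-1)^{\uparrow}$ and to the $k$-shape $\lambda^{(n-1)}=\shape(T_{n-1})$. Since the cells of letter $n$ are $\lambda^{(n)}/\lambda^{(n-1)}$ in either reading of $T$, the cells $\nup$ and $n_-^{\uparrow}$ are the same in the two settings; so, assuming inductively that the two definitions have already produced a common value $\charge(n-1)$, it suffices to check that the increment given by \eqref{defcharge1} equals the one given by \eqref{defcharge2}.

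First I would set up the translation. By Proposition~\ref{propktab}, since $T$ is a $k$-tableau each $\lambda^{(i)}$ is a $(k+1)$-core and the step $\lambda^{(n-1)}\to\lambda^{(n)}$ adds every addable cell of a single residue $e$; in particular $\nup$ is, inside $\lambda^{(n-1)}$, the topmost addable corner of residue $e$, so the row $r'=\row(\nup)$ of $\lambda^{(n-1)}$ carries an addable corner (namely $\nup$) of residue $e$. I would then verify that the row $r=\row(n_-^{\uparrow})+1$ likewise carries an addable corner of $\lambda^{(n-1)}$, which is what makes $[r,r')_k$ and $(r',r]_k$ well-defined here; this is the point at which the reverse-maximality of the cover of letter $n-1$ (via Proposition~\ref{propktab}) is used. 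With these identifications, and using the basic structure of the addable corners of a $(k+1)$-core --- their diagonals strictly increase as one moves down the shape, consecutive ones differ by at most $k+1$, and the $k$-connected row below a given row is the lowest row still reachable by a single hop of diagonal-length $\le k+1$ (cf.~\cite{LM:cores}) --- the equality of increments reduces to the core-theoretic statement, in the $(k+1)$-core $\lambda^{(n-1)}$,
\begin{equation*}
[r,r')_k={\rm diag}_{e_-}(n_-^{\uparrow},\nup)+1 \quad (r\ge r'),\qquad (r',r]_k={\rm diag}_{e}(\nup,n_-^{\uparrow}) \quad (r<r'),
\end{equation*}
where $e_-$ is the residue of $n_-^{\uparrow}$. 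I would prove this by tracing the maximal chain of $k$-connected rows issuing from row $r$: in the first case the chain descends precisely until it reaches row $r'=\row(\nup)$, and the number of its steps --- that is, $[r,r')_k$ --- is exactly the number of residue-$e_-$ diagonals met while descending from $n_-^{\uparrow}$ to $\nup$, namely ${\rm diag}_{e_-}(n_-^{\uparrow},\nup)+1$; the second case is the mirror statement. The two branches of \eqref{defcharge1} ($n_-^{\uparrow}$ weakly above, resp. below, $\nup$) correspond to $r\ge r'$, resp. $r<r'$, and the degenerate case $r=r'$ (where $[r,r')_k=0$ must match ${\rm diag}_{e}(\nup,n_-^{\uparrow})=0$) is handled by noting that $\nup$ and $n_-^{\uparrow}$ then lie on diagonals with no residue-$e$ diagonal strictly between them. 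Granting this identity, the induction closes. For cocharge the argument is word-for-word the same with $\ndown$ in place of $\nup$ (alternatively one may deduce it from the charge case via the already-recorded charge/cocharge symmetry \eqref{E:cocharge}).

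I expect the main obstacle to be exactly the core-theoretic identity displayed above: both the preliminary point that row $r$ carries the addable corner needed for $[r,r')_k$ to make sense (where reverse-maximality enters) and the diagonal count itself, which must be carried out uniformly over all configurations, including those where $\nup$ and $n_-^{\uparrow}$ share a row or a diagonal. Conceptually this is just careful bookkeeping with the corners of a $(k+1)$-core, but it is where essentially all the content of the proposition resides.
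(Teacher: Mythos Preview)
Your proposal is correct and follows essentially the same route as the paper: reduce to the single-letter increment and verify the displayed identity between $k$-connected-row counts and residue-diagonal counts in the $(k+1)$-core $\lambda^{(n-1)}$. The paper packages the key step as a standalone lemma (Lemma~\ref{lemresidueconnect}: each hop to the $k$-connected row below carries a fixed ground residue and crosses no diagonal of that residue) and uses the elementary fact $e\neq e_-$ to align the residue before iterating it, but otherwise the arguments coincide.
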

\begin{proof}  Let $r$ and $r'$ be respectively the row above
that of $n_-^\uparrow$ and the row of $n^\uparrow$.  Let also
$e$ and $e'$ be the residues of $n^\uparrow$
and $n_-^\uparrow$ respectively.

Suppose that $r \geq r'$.  Then we need to show that
\begin{equation}
[r,r')_k= {\rm diag}_{e'} (n_-^\uparrow, n^\uparrow) +1
\end{equation}
or more simply, that
\begin{equation} \label{chargecharge1}
(r,r')_k= {\rm diag}_{e'} (n_-^\uparrow, n^\uparrow)
\end{equation}
It is an elementary fact of $k$-tableaux (see for instance \cite{LM:cores}) that $e \neq e'$.
This implies that
${\rm diag}_{e'} (n_-^\uparrow, n^\uparrow)=
{\rm diag}_{e'-1} (n_-^\uparrow, n^\uparrow)$.  Since there is a residue
$e'-1$ in row $r$, \eqref{chargecharge1} follows from
Lemma~\ref{lemresidueconnect}.

Now suppose that $r < r'$.  We need to show that
\begin{equation}
(r',r]_k= {\rm diag}_{e} (n^\uparrow, n_-^\uparrow)
\end{equation}
or equivalently, that
\begin{equation}
(r',r-1)_k= {\rm diag}_{e} (n^\uparrow, n_-^\uparrow)
\end{equation}
Since there is a residue $e$ in row $r'$, the equation follows again
from Lemma~\ref{lemresidueconnect}.

The proof when charge is replaced by cocharge
is similar.
\end{proof}

\begin{lemma} \label{lemresidueconnect}
Let $\lambda$ be a $k+1$-core.  Let $r'$ be the $k$-connected row below a certain row $r$ of $\lambda$.  If there is a residue
$e$ on the ground  in row
$r$ of $\lambda$ (that is, there is a cell $b=(i,j)$ of residue $e$ in row
$r$ such that $(i,j) \not \in \lambda$ and $(i-1,j) \in \lambda$),
then there
 is also a residue $e$ on the ground in row
$r'$.  Moreover, there is no diagonal of residue $e$ between rows
$r$ and $r'$.
\end{lemma}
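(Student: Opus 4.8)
The plan is to translate everything into the abacus (bead‑sequence) description of $(k{+}1)$‑cores. Set $p:=k+1$ and, for a partition $\lambda$, put $\beta_i:=\lambda_i-i$ (with $\lambda_i=0$ for $i>\ell(\lambda)$), so $\beta(\lambda)=\{\beta_i:i\ge1\}$ is the set of \emph{beads}, its complement in $\Z$ the \emph{gaps}, and $\beta_1>\beta_2>\cdots$. Row $i$ ends at the bead $\beta_i$, and the diagonals of the cells lying on the ground in row $i$ are exactly the integers of the gap‑run $(\beta_i,\beta_{i-1})$ immediately above $\beta_i$ (nonempty precisely when row $i$ has an addable corner, whose diagonal is $\beta_i+1$). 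Thus ``row $i$ has a residue‑$e$ cell on the ground'' means $(\beta_i,\beta_{i-1})$ contains an integer $\equiv e\pmod p$. The only use of the $p$‑core hypothesis is the standard fact that $\beta(\lambda)$ is closed under subtracting $p$ (beads flush on each runner), which has two consequences I will invoke repeatedly: every gap‑run $(\beta_i,\beta_{i-1})$ has length $\le p-1$ (otherwise $\beta_{i-1}-p$ would be a gap), and every $c$ that is a gap has $c+p$ a gap as well.

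With this dictionary fixed, let $c_0$ be the diagonal of the residue‑$e$ ground cell of row $r$, so $c_0\in(\beta_r,\beta_{r-1})$ is a gap, $c_0\equiv e\pmod p$, and I write $c_0=\beta_r+t$ with $1\le t\le\beta_{r-1}-\beta_r-1\le p-1$. Then $c_0+p$ is again a gap, and it is the first residue‑$e$ gap above $c_0$. First I would record the cheap inequalities: since $r'\le r-1$ we have $\beta_{r-1}\le\beta_{r'}$, hence $t<\beta_{r-1}-\beta_r\le\beta_{r'}-\beta_r=:D$, and since $r'$ is $k$‑connected below $r$ we have $D\le p$; together these give $c_0<\beta_{r'}<c_0+p$. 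Consequently $c_0+p$ lies strictly above the bead $\beta_{r'}$, hence inside the gap‑run $(\beta_{r''},\beta_{r''-1})$ of some row $r''\le r'$ that necessarily has an addable corner. The crux is to prove $r''=r'$. Suppose $r''<r'$: then $\beta_{r'}<\beta_{r''}<c_0+p$, and row $r''$ is a row with an addable corner lying strictly below $r'$; because $r'$ is by definition the \emph{lowest} row carrying an addable corner whose distance to the addable corner of $r$ is $\le p$, we must have $\beta_{r''}-\beta_r>p$. Combined with $\beta_{r''}<\beta_r+t+p$ this forces $\beta_{r''}=\beta_r+p+u$ with $1\le u\le t-1$, and then $\beta_{r''}-p=\beta_r+u\in(\beta_r,\beta_{r-1})$ would be a bead of $\lambda$ sitting among the gaps of row $r$ — contradiction. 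Hence $c_0+p$ lies in the gap‑run of row $r'$, i.e.\ row $r'$ has a residue‑$e$ cell on the ground, at diagonal $c_0+p$.

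For the second assertion I would note that any row $r''$ with $r'<r''<r$ satisfies $\beta_{r''}\ge\beta_{r-1}$ (from $r''\le r-1$) and $\beta_{r''-1}\le\beta_{r'}$ (from $r''-1\ge r'$), so its ground‑run $(\beta_{r''},\beta_{r''-1})$ is contained in $[\beta_{r-1}+1,\beta_{r'}-1]$, which in turn lies in the open interval $(c_0,c_0+p)$ (using $t\le\beta_{r-1}-\beta_r-1$ on the left, $D\le p$ on the right). Since $c_0$ and $c_0+p$ are consecutive integers $\equiv e\pmod p$, that interval contains no residue‑$e$ diagonal; hence no row strictly between $r'$ and $r$ carries a residue‑$e$ ground cell, which is what ``no diagonal of residue $e$ between rows $r$ and $r'$'' asserts. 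The cocharge/mirror statement follows by transposing, and the degenerate case $r'=1$ (where the floor plays the role of row $0$, so every residue occurs on the ground of row $1$) is trivial.

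The step I expect to be the main obstacle is exactly the exclusion of $r''<r'$: one has to recognize that this is the one place where the \emph{defining} feature of the $k$‑connected row — the lowest row within distance $k+1$, not merely \emph{a} row within that distance — is essential, and then see that the offending bead $\beta_{r''}$, pushed down by $p$ through the $p$‑core condition, is pinned inside the gap‑run of row $r$. Everything else is bookkeeping with the French convention, chiefly keeping straight that ``below'' means smaller row index and larger addable‑corner diagonal, and I would take some care to state the bead dictionary precisely before starting, since the paper has not introduced it.
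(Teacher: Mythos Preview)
Your proof is correct and takes a genuinely different route from the paper's. The paper argues geometrically: it lets $c$ be the uppermost cell of $\Int^k(\lambda)$ in the column of the given ground cell $b$, observes (via a picture) that $c$ lies in row $r'-1$, and then uses $h_c(\lambda)>k$ together with the $(k{+}1)$-core hypothesis to locate a residue-$e$ cell $a$ in row $r'-1$ whose diagonal neighbour $b'$ in row $r'$ is the desired ground cell. The ``Moreover'' clause is left implicit in the picture.

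Your abacus argument trades that geometry for arithmetic on the bead sequence. What it buys you is a fully self-contained verification: the crucial step --- that the next residue-$e$ gap $c_0+p$ lands in the gap-run of row $r'$ rather than some lower row $r''$ --- is exactly where you invoke the \emph{defining} minimality of $r'$ and the downward closure of beads under $-p$, and you isolate this cleanly. You also get the ``Moreover'' clause essentially for free, since the ground diagonals of any intermediate row are trapped in $(c_0,c_0+p)$. The cost is the upfront dictionary (beads, gaps, gap-runs), which the paper has not set up; the paper's version is shorter but leans on ``easy to see'' and a figure. Both approaches exploit the same underlying fact --- that in a $(k{+}1)$-core no hook has length $k{+}1$ --- but yours makes the role of the minimality of $r'$ explicit, whereas the paper hides it inside the claim that the top interior cell in the column of $b$ sits precisely in row $r'-1$. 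Your closing remarks about transposition and the degenerate case $r'=1$ are harmless but not needed for the lemma as stated.
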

\begin{proof}
Let $b$ be the position of the cell of residue $e$ on the ground in row $r$.
Let $c$ be the position of the uppermost cell of ${\rm Int}^k(\lambda)$
in the column of $b$.  It is easy to see that $c$ lies in row
$r'-1$.  Since $h_c(\lambda) > k$, there is a cell $a$ of residue $e$ in
the row of $c$ (see picture).

 \setlength{\unitlength}{0.25mm}
 \begin{picture}(150,130)(-30,-10)

 \put(0,0){\line(1,0){150}}  \put(0,0){\line(0,1){120}}

 \put(20,90){\line(1,0){50}}  \put(20,90){\line(0,1){10}}
 \put(60,20){\line(1,0){100}}  \put(50,30){\line(0,1){60}}           
 \put(50,30){\line(1,0){110}}   \put(60,20){\line(0,1){70}}
 \put(140,20){\line(0,1){10}}  \put(150,20){\line(0,1){10}}
 \put(160,20){\line(0,1){10}}  \put(110,30){\line(0,1){10}}

\put(150,90){$r$}
\put(145,95){\vector(-1,0){60}}

\put(200,20){$r'-1$}
\put(195,25){\vector(-1,0){20}}


\put(52,22){\small $c$}
\put(52,92){\small $b$}
\put(142,22){\small $a$}
\put(152,32){\small $b'$}

\bezier{20}(80,120)(50,90)(20,60)
\bezier{50}(120,0)(180,60)(240,120)

\put(5,5){$\lambda$}

\end{picture}

Observe that by definition of $c$, the cell $b'$ in the picture
does not belong to $\lambda$.  It thus suffices to show that the
cell $b'$ has a cell of
$\lambda$ immediately below.
But this has to be the case since
$\lambda$ is a $k+1$-core
(otherwise we would have $h_c(\lambda) = k+1$).
\end{proof}

\subsection{Relation between charge and cocharge}

Before  establishing the relation between charge and cocharge given in
Proposition~\ref{propchargecocharge} below, we need to prove
a series of technical lemmas that will culminate with
Lemma~\ref{lemmechargecochargen} (which is essentially equivalent to
Proposition~\ref{propchargecocharge}).

\begin{lemma}\label{conec1}
Let $r_1$ y $r_2$ be two contiguously connected rows of a given $k$-shape
$\lambda$, with $r_1>r_2$.  If $s_1$ is a row of $\lambda$ such that
 $s_1>r_1$, then the $k$-connected row  $s_2$ below $s_1$ is such that
$s_2> r_2$.
\end{lemma}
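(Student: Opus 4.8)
The plan is to phrase everything in terms of diagonal indices of addable corners. For a row $r$ of $\lambda$ carrying an addable corner, write $c_r$ for that corner; then $\diag(c_r)=\lambda_r+1-r$. The first thing I would record is a monotonicity statement: if $x>y$ are two rows of $\lambda$ that both carry addable corners, then
\[
\diag(c_y)-\diag(c_x)=(\lambda_y-\lambda_x)+(x-y)\ge x-y\ge 1 ,
\]
so the corner of the lower row lies strictly to the right, and the distance between $c_x$ and $c_y$ equals $\diag(c_y)-\diag(c_x)$.

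The one place that requires a genuine (if slight) argument — and the step I would flag as the crux — is upgrading this gap to at least $2$, which is exactly what turns a weak inequality into the strict one needed at the end. Suppose $\diag(c_y)-\diag(c_x)=1$ for rows $x>y$ with addable corners. Then necessarily $\lambda_y=\lambda_x$ and $x=y+1$; but row $x=y+1$ having an addable corner forces $\lambda_{x-1}>\lambda_x$, i.e.\ $\lambda_y>\lambda_x$, a contradiction. Hence $\diag(c_y)-\diag(c_x)\ge 2$ for every such pair.

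It remains to assemble the estimate. By hypothesis $r_1$ and $r_2$ both carry addable corners, and $s_1$ does too since it has a $k$-connected row below it. Applying the sharpened gap to $s_1>r_1$ gives $\diag(c_{r_1})-\diag(c_{s_1})\ge 2$, while contiguous connectedness of $r_1$ and $r_2$ (with $r_1>r_2$) gives $\diag(c_{r_2})-\diag(c_{r_1})\in\{k,k+1\}$, in particular at least $k$. Adding,
\[
\diag(c_{r_2})-\diag(c_{s_1})=\bigl(\diag(c_{r_2})-\diag(c_{r_1})\bigr)+\bigl(\diag(c_{r_1})-\diag(c_{s_1})\bigr)\ge k+2 ,
\]
so the distance between $c_{s_1}$ and $c_{r_2}$ is strictly larger than $k+1$. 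Any row $r\le r_2$ carrying an addable corner satisfies $\diag(c_r)\ge\diag(c_{r_2})$ by the monotonicity above, so its corner too is at distance $>k+1$ from $c_{s_1}$. Hence no row of index $\le r_2$ can be the $k$-connected row below $s_1$, and since $s_2$ is by definition the lowest such row we conclude $s_2>r_2$. (The argument uses only that $\lambda$ is a partition.)
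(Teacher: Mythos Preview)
Your proof is correct and follows essentially the same strategy as the paper's: both arguments show that the distance from the addable corner of $s_1$ to that of $r_2$ exceeds $k+1$, whence $s_2>r_2$. The paper encodes this via hook lengths, choosing cells $b,b'$ in row $r_2$ whose hooks measure (one less than) the distances from $c_{r_1}$ and $c_{s_1}$ to $c_{r_2}$ respectively, and observing $h_{b'}(\lambda)\ge h_b(\lambda)+2\ge k+1$; your version works directly with diagonal indices of the addable corners. Your explicit ``gap $\ge 2$'' argument is exactly the content of the paper's ``obviously $h_{b'}(\lambda)\ge h_b(\lambda)+2$'', so the two proofs differ only in bookkeeping, with yours having the minor advantage of not needing to locate auxiliary cells inside $\lambda$.
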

\begin{proof}  The proof can be easily visualized with the following diagram:

\begin{picture}(200,90)(10,0)

\put(10,10){\line(1,0){150}}  \put(10,10){\line(0,1){80}}
\put(10,80){\line(1,0){20}}  \put(110,10){\line(0,1){10}}
\put(50,60){\line(1,0){30}}  \put(50,10){\line(0,1){60}}
\put(20,10){\line(0,1){70}}  \put(60,10){\line(0,1){50}}
\put(10,20){\line(1,0){100}}

\put(140,65){$r_1$}
\put(190,15){$r_2$}

\put(100,85){$s_1$}

\put(135,65){\vector(-1,0){20}}
\put(185,15){\vector(-1,0){20}}
\put(95,85){\vector(-1,0){20}}

\put(13,11){\small{$b'$}}
\put(53,11){\small{$b$}}


\end{picture}

\noindent Obviously $h_{b'}(\lambda) \geq h_b(\lambda)+2$.  By hypothesis,
$h_b \geq k-1$, which implies $h_{b'}(\lambda) \geq k+1$.  Hence,
from the definition
of $k$-connected rows, we have $s_1> r_2$.
\end{proof}
The next lemma is immediate.
\begin{lemma}\label{conec2}
Let $r_1$ and $r_2$ be two $k$-connected rows of a given $k$-shape
$\lambda$, with $r_1>r_2$.  If $s_1$ is a row of $\lambda$ such that
 $s_1\leq r_1$, then the $k$-connected row  $s_2$ below $s_1$ is such that
$s_2 \leq r_2$.
\end{lemma}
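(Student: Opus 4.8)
The plan is to reduce the statement to elementary bookkeeping on the diagonal indices of addable corners, after which it is indeed immediate. The single fact I will use is that, among the rows of $\lambda$ carrying an addable corner, the diagonal index of that corner \emph{strictly decreases} as the row index increases: the addable corner of row $i$ occupies the cell $(i,\lambda_i+1)$, whose diagonal index is $\lambda_i+1-i$, and $\lambda_i-i$ is strictly decreasing in $i$ (in French notation a higher row is shorter). Hence, for rows $u<v$ carrying addable corners $a_u,a_v$, we have $\diag(a_u)>\diag(a_v)$, so the distance between them equals $\diag(a_u)-\diag(a_v)$; and the statement ``$v'$ is the $k$-connected row below $v$'' means exactly that $v'$ is the lowest (smallest-index) row strictly below $v$ with $\diag(a_{v'})-\diag(a_v)\le k+1$.

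First I would fix notation: let $a_1,a_2,b_1$ be the addable corners of $\lambda$ in rows $r_1,r_2,s_1$ respectively --- these all exist, $s_1$ in particular carrying one since the statement refers to ``the $k$-connected row $s_2$ below $s_1$'' --- and let $s_2$ (with addable corner $b_2$) be that $k$-connected row. From the hypotheses I extract two inequalities: since $r_2$ is the $k$-connected row below $r_1$ we have $\diag(a_2)-\diag(a_1)\le k+1$, and since $s_1\le r_1$ the monotonicity above gives $\diag(b_1)\ge\diag(a_1)$.

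Now I would argue by contradiction: suppose $s_2>r_2$. Since $s_2$ lies strictly below $s_1$, this forces $r_2<s_2<s_1$, so $r_2$ is a row strictly below $s_1$ that carries an addable corner, and (as $r_2<s_1$) $\diag(a_2)>\diag(b_1)$, whence the distance between $b_1$ and $a_2$ is $\diag(a_2)-\diag(b_1)\le \diag(a_2)-\diag(a_1)\le k+1$. Thus $r_2$ is a row strictly below $s_1$ whose addable corner is within distance $k+1$ of $b_1$, while $r_2<s_2$; this contradicts the defining property of $s_2$ as the \emph{lowest} such row. Hence $s_2\le r_2$, as claimed. I do not expect any real obstacle here (the paper itself flags the lemma as immediate); the only points requiring care are keeping the direction of the diagonal inequalities straight in French notation and observing that the hypothesis $s_1\le r_1$ is precisely what certifies the addable corner of $r_2$ as a valid candidate, so that minimality of $s_2$ may be invoked. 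This is the same diagonal accounting that underlies the companion Lemma~\ref{conec1}, which is why the two lemmas are stated together.
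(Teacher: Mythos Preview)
Your argument is correct and is precisely the immediate diagonal-index bookkeeping the paper has in mind; indeed the paper gives no proof at all, simply declaring the lemma immediate. Your proof is a clean unpacking of that immediacy, and the only approach on offer.
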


\begin{lemma}\label{conec3}
Let $T$ be any $k$-shape tableau, and let
$\cocharge'(n+1)$ be the
hypothetical cocharge in $T$
of the letter $n+1$  if $n_+^\downarrow$ were
located in the position of $n_+^\uparrow$.
Then $\cocharge'(n+1)-\cocharge(n+1)=\ell$, where $\ell=|c_{n+1}|-1$.

\begin{picture}(200,80)(-40,-10)

\put(-40,20){\line(1,0){10}}  \put(-40,20){\line(0,1){10}}
\put(10,20){\line(1,0){10}}  \put(10,20){\line(0,1){10}}
\put(80,20){\line(1,0){10}}  \put(80,20){\line(0,1){10}}
\put(110,20){\line(1,0){10}}  \put(110,10){\line(0,1){10}}
\put(110,10){\line(1,0){10}}  \put(120,10){\line(0,1){10}}
\put(140,20){\line(1,0){15}}  \put(140,20){\line(0,1){10}}
\put(170,20){\line(1,0){10}}
\put(230,20){\line(1,0){15}}  \put(230,20){\line(0,1){10}}
\put(260,20){\line(1,0){10}}
\put(290,20){\line(1,0){15}}  \put(290,20){\line(0,1){10}}

\put(-38,22){$r_1$}
\put(12,22){$r_2$}
\put(82,22){$r_{i}$}
\put(142,22){$r_{\!i\!+\!1}$}
\put(232,22){$r_{\ell}$}
\put(292,22){$r_{\! \ell + \! 1}$}
\put(112,22){$s$}

\put(112,12){\small $n$}

\put(-45,50){\fbox{$n+1$}}  \put(285,50){\fbox{$n+1$}}

\put(-35,45){\vector(0,-1){15}} \put(295,45){\vector(0,-1){15}}

\put(30,22){$\cdots$} \put(200,22){$\cdots$}
\end{picture}

\end{lemma}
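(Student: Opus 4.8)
The plan is to unravel the recursive definition of cocharge and reduce the claim to a statement about how the descending chain of $k$-connected rows of the $k$-shape $\lambda^{(n)}$ (the shape of $T_n$, with respect to which $\cocharge(n+1)$ is evaluated) meets the rows occupied by the cover $c_{n+1}$. Write $a_1,\dots,a_{\ell+1}$, with $a_{j+1}$ below $a_j$, for the cells of $c_{n+1}=\lambda^{(n+1)}/\lambda^{(n)}$, and $r_1>r_2>\dots>r_{\ell+1}$ for their rows; each $r_j$ carries an addable corner of $\lambda^{(n)}$. Since $\nplusup=a_1$ lies in row $r_1$ and $\nplusdown=a_{\ell+1}$ lies in row $r_{\ell+1}$, if we write $R$ for the row above $\ndown$ and set $g(\rho)=-(R,\rho)_k$ when $R>\rho$ and $g(\rho)=[\rho,R]_k$ when $R\le\rho$ (all intervals computed in $\lambda^{(n)}$), then the cocharge recursion reads $\cocharge(n+1)=\cocharge(n)+g(r_{\ell+1})$ and $\cocharge'(n+1)=\cocharge(n)+g(r_1)$. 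Thus it suffices to prove $g(r_1)-g(r_{\ell+1})=\ell$.

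The first thing I would establish is that $r_1,r_2,\dots,r_{\ell+1}$ is itself a sequence of $k$-connected rows of $\lambda^{(n)}$, consecutive ones being contiguously connected: since $a_j$ and $a_{j+1}$ are contiguous their corners are at diagonal distance $k$ or $k+1$, and by Remark~\ref{R:contig} no addable corner of $\lambda^{(n)}$ below $a_j$ other than $a_{j+1}$ is contiguous to $a_j$, so $a_{j+1}$ realizes the largest diagonal $\le\diag(a_j)+k+1$ among addable corners below $a_j$, i.e.\ $r_{j+1}$ is exactly the $k$-connected row below $r_j$. I would also use that in a $k$-shape the addable corners in successive rows are at diagonal distance at most $k+1$, so the chain of $k$-connected rows descending from any row continues as long as addable corners remain below it; this keeps the chains that appear below from terminating prematurely.

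The argument then splits into three cases according to $R$, the figure accompanying the statement depicting the middle one. If $R\le r_{\ell+1}$ the $k$-connected chain from $r_1$ begins $r_1,r_2,\dots,r_{\ell+1},\dots$, and as $r_1,\dots,r_{\ell+1}$ all lie weakly above $R$ one reads off directly $[r_1,R]_k=\ell+[r_{\ell+1},R]_k$, i.e.\ $g(r_1)-g(r_{\ell+1})=\ell$. When $r_{\ell+1}<R\le r_1$, let $i$ be largest with $r_i\ge R$; the chain from $r_1$ gives $g(r_1)=[r_1,R]_k=i$, while an induction on $m$ that alternates Lemma~\ref{conec1} (to keep $\sigma_{m+1}$ above $r_{i+m+1}$) with Lemma~\ref{conec2} (to keep $\sigma_{m+1}$ weakly below $r_{i+m}$) shows the chain $\sigma_1=R,\sigma_2,\dots$ satisfies $r_{i+m}<\sigma_m\le r_{i+m-1}$ for $1\le m\le\ell+1-i$ and then $\sigma_{\ell+2-i}\le r_{\ell+1}$, whence $(R,r_{\ell+1})_k=\ell-i$ and $g(r_1)-g(r_{\ell+1})=i+(\ell-i)=\ell$. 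The case $R>r_1$ is handled the same way, first running the chain from $R$ down to the last row lying above $r_1$ and then applying the identical inductive pinning between the consecutive $r_j$'s, which gives $(R,r_{\ell+1})_k=(R,r_1)_k+\ell$. I expect the bookkeeping of the middle case to be the main obstacle: at each step the descending chain $\sigma_m$ must be trapped strictly between two consecutive cover rows, which costs one application of each of Lemmas~\ref{conec1} and \ref{conec2} per step and a careful treatment of the endpoints, and it is here that the density statement for addable corners of $k$-shapes is needed to ensure the chain does not run out.
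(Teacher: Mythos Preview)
Your proof is correct and follows essentially the same approach as the paper: both identify the rows $r_1,\dots,r_{\ell+1}$ of $c_{n+1}$ as a chain of contiguously connected rows, then use Lemmas~\ref{conec1} and~\ref{conec2} to pin the descending $k$-connected chain from the row above $n^\downarrow$ between consecutive $r_j$'s, yielding the count $\ell$. The paper treats only the middle case $r_{\ell+1}<s_1\le r_1$ (your $R$ is their $s_1$) and declares the other two to be simplified versions; your packaging via the function $g$ and explicit handling of all three cases is a bit tidier, but the substance is identical.
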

\begin{proof}
Let $T_n$ be  the shape of $\lambda$.
Let $r_1, \dots,r_{\ell+1}$ be the rows where the letter $n+1$ occurs in $T$
($r_1$ and $r_{\ell+1}$ are thus the rows of
$n_+^\uparrow$ and $n_+^\downarrow$ respectively).
These correspond by definition of $c_{n+1}$ to a sequence of
contiguously connected rows of $\lambda$.

Suppose
that $s_1$ is the row of $\lambda$ above that of  $n^\downarrow$.
We will only prove the case where $s_1$ lies between $r_1$
and $r_{\ell+1}$ (the other cases are simplified versions of that case).
We thus suppose that
$s_1$ lies between $r_1$ and $r_{\ell+1}$.
Let $r_i$ be the lowest among $r_1,\dots,r_{\ell+1}$ such that $r_i\geq s_1$.
Let also $s_1,\dots,s_{\ell-i+1}$ be the sequence of
$k$-connected rows weakly below row $s_1$.
>From Lemma~\ref{conec1} and \ref{conec2}, we have
$$
r_i \geq s_1 > r_{i+1} \geq s_2 > \cdots \geq s_{\ell-i} > r_{\ell+1} \geq
s_{\ell-i+1}
$$
and it is then immediate that  $(s_1,r_{\ell+1})_k=(r_i,r_{\ell+1})_k$.

We can now proceed with the proof of the lemma.
We have $\cocharge'(n+1) = \cocharge(n) + [r_1,s_1]_k$ and
$\cocharge(n+1) = \cocharge(n) - (s_1,r_{\ell+1})$. Thus
$\cocharge'(n+1) - \cocharge(n+1) = [r_1,s_1]_k+(s_1,r_{\ell+1})_k$.
But $[r_1,s_1]_k=[r_1,r_i]_k$ and,
as we have seen, $(s_1,r_{\ell+1})_k= (r_i,r_{\ell+1})_k$.  This implies
that $\cocharge'(n+1) - \cocharge(n+1) = [r,r_{\ell+1})_k = \ell$.
\end{proof}

The next lemma, which is similar to the previous one, is stated without proof.
\begin{lemma}\label{conec33}
Let $T$ be any $k$-shape tableau, and
 let $\charge'(n)$ be the
hypothetical charge in $T$ of the letter $n$  if $n^\uparrow$ were
located instead in  the position of $n^\downarrow$.
Then $\charge'(n)-\charge(n)=\ell$, where $\ell=|c_n|-1$.
\end{lemma}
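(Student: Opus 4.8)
The plan is to adapt the proof of Lemma~\ref{conec3} to the dual situation, interchanging $\charge$ with $\cocharge$, the symbol $\uparrow$ with $\downarrow$, and the top row of the cover $c_n$ with its bottom row. I would first take $\lambda$ to be the shape of $T_{n-1}$, since by \eqref{defcharge2} the quantities $[r,r')_k$ and $(r',r]_k$ entering $\charge(n)$ (and $\charge'(n)$) are computed in that $k$-shape. Let $r_1>r_2>\dots>r_{\ell+1}$ be the rows occupied by the letter $n$ in $T$; by definition of the cover $c_n=\lambda^{(n)}/\lambda^{(n-1)}$ these form a sequence of contiguously connected rows of $\lambda$, the row of $n^\uparrow$ is $r_1$, the row of $n^\downarrow$ is $r_{\ell+1}$, and $\ell=|c_n|-1$. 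Write $s$ for the row immediately above the row of $n_-^\uparrow$: this is the ``$r$'' of \eqref{defcharge2}, the same for $\charge(n)$ and $\charge'(n)$, the only change being that the ``$r'$'' switches from $r_1$ (the row of $n^\uparrow$) to $r_{\ell+1}$ (the row of $n^\downarrow$). I also record that, for contiguously connected rows $r>r'$, the row $r'$ is automatically the $k$-connected row below $r$ (the next addable corner below that of $r'$ lies at diagonal distance at least $k+2$ from that of $r$); consequently the maximal sequence of $k$-connected rows starting at $r_1$ begins $r_1,r_2,\dots,r_{\ell+1}$.

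The heart of the argument is the case $r_{\ell+1}\le s\le r_1$; the remaining two cases, $s>r_1$ and $s<r_{\ell+1}$, are degenerate variants. Taking $s<r_1$ for definiteness (the boundary case $s=r_1$ is immediate), \eqref{defcharge2} gives $\charge(n)=\charge(n-1)-(r_1,s]_k$ and $\charge'(n)=\charge(n-1)+[s,r_{\ell+1})_k$, so it remains to show
\begin{equation}
[s,r_{\ell+1})_k+(r_1,s]_k=\ell .
\end{equation}
Let $r_i$ be the lowest of $r_1,\dots,r_{\ell+1}$ with $r_i\ge s$, so $r_i\ge s>r_{i+1}$; by the observation recorded above, $(r_1,s]_k=i-1$. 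For the other term, write $\sigma_1=s,\sigma_2,\dots$ for the maximal sequence of $k$-connected rows starting at $s$; applying Lemmas~\ref{conec1} and \ref{conec2} repeatedly, exactly as in the proof of Lemma~\ref{conec3}, produces the interlacing
\begin{equation}
r_i\ge\sigma_1>r_{i+1}\ge\sigma_2>r_{i+2}\ge\cdots\ge\sigma_{\ell+1-i}>r_{\ell+1},
\end{equation}
so that precisely $\sigma_1,\dots,\sigma_{\ell+1-i}$ lie strictly above $r_{\ell+1}$, whence $[s,r_{\ell+1})_k=\ell+1-i$. Adding gives $(i-1)+(\ell+1-i)=\ell$, as needed. (The sequence from $s$ does not terminate too soon: the sequence from $r_1$ reaches down through all of $r_1,\dots,r_{\ell+1}$, and the interlacing forces the former to keep pace.)

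For the remaining cases I would argue in the same spirit. When $s>r_1$, I introduce the first index $a$ with $\sigma_a\le r_1$, note from Lemma~\ref{conec1} that $r_1\ge\sigma_a>r_2$, and run the interlacing from $\sigma_a$ onward to obtain $[s,r_{\ell+1})_k-[s,r_1)_k=(a-1+\ell)-(a-1)=\ell$. When $s<r_{\ell+1}$, I use that the maximal $k$-connected sequence from $r_1$ continues below $r_{\ell+1}$, so the sequences from $r_1$ and from $r_{\ell+1}$ share the same tail past $r_{\ell+1}$ and $(r_1,s]_k-(r_{\ell+1},s]_k=\ell$. Carrying out the identical computations with $\charge$ replaced by $\cocharge$ and $\uparrow$ by $\downarrow$ recovers Lemma~\ref{conec3}. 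The step demanding the most care --- essentially the only place one can err --- is keeping the signs and the closed-versus-open endpoints in \eqref{defcharge2} straight while dualizing, since the charge and cocharge recursions are not literal transposes of each other: their two branches carry opposite signs, and the boundary values $r=r'$ are assigned to different branches. Once that is pinned down, everything rests on Lemmas~\ref{conec1}--\ref{conec2} and the uniqueness of the $k$-connected row below a given row.
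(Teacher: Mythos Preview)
Your proposal is correct and follows exactly the approach the paper intends: the paper states the lemma without proof, remarking only that it ``is similar to the previous one'' (Lemma~\ref{conec3}), and you have carried out precisely that dualization, replacing $\cocharge$ by $\charge$, $\downarrow$ by $\uparrow$, and working in the shape of $T_{n-1}$ rather than $T_n$, with the same interlacing argument via Lemmas~\ref{conec1} and \ref{conec2}. Your treatment of the endpoint conventions in \eqref{defcharge2} and of the two degenerate cases ($s>r_1$ and $s<r_{\ell+1}$) is more explicit than the paper's own handling in Lemma~\ref{conec3}, where those cases are dismissed as ``simplified versions.''
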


\begin{lemma}\label{conec4}
Let $T$ be any $k$-shape tableau, and let $\charge'(n)$ and $\charge'(n+1)$ be the
hypothetical charge in $T$ of the letters $n$ and $n+1$ respectively
if $n^\uparrow$ were
located instead in the position of $n^\downarrow$.
Then $\charge(n+1)=\charge'(n+1)$.
\end{lemma}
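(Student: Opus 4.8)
The plan is to combine Lemma~\ref{conec33} with an additivity property of the $k$-connectedness intervals. Put $\ell=|c_n|-1$, and let $q_1>q_2>\dots>q_{\ell+1}$ be the rows occupied by the letter $n$ in $T$, so $n^\uparrow$ lies in row $q_1$ and $n^\downarrow$ in row $q_{\ell+1}$; being the rows of the cover $c_n$, they form a sequence of contiguously connected rows. Moving $n^\uparrow$ to the cell $n^\downarrow$ changes neither the shape of $T_n$ nor the position of $(n+1)^\uparrow$, so in the recursion~\eqref{defcharge2} for the step $n\mapsto n+1$ the only quantity affected is the row ``$r$'' --- the row above that of $(n+1)_-^\uparrow=n^\uparrow$ --- which passes from the row $r_0$ just above $q_1$ to the row $r_1$ just above $q_{\ell+1}$. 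Writing $w$ for the row of $(n+1)^\uparrow$ and $\varepsilon(r):=[r,w)_k$ when $r\ge w$, $\varepsilon(r):=-(w,r]_k$ when $r<w$ (computed in the $k$-shape $T_n$), we have $\charge(n+1)=\charge(n)+\varepsilon(r_0)$ and $\charge'(n+1)=\charge'(n)+\varepsilon(r_1)$. Since Lemma~\ref{conec33} gives $\charge'(n)=\charge(n)+\ell$, the statement $\charge(n+1)=\charge'(n+1)$ is equivalent to the purely geometric identity $\varepsilon(r_0)-\varepsilon(r_1)=\ell$.

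I would establish $\varepsilon(r_0)-\varepsilon(r_1)=\ell$ by the same kind of case analysis as in the proof of Lemma~\ref{conec3}. Let $q_i$ be the lowest among $q_1,\dots,q_{\ell+1}$ that is $\ge w$, the cases $w>q_1$ and $w\le q_{\ell+1}$ being degenerate versions of this generic one. Feeding the contiguously connected chain $q_1>\dots>q_{\ell+1}$ into Lemmas~\ref{conec1} and~\ref{conec2} one gets, exactly as in Lemma~\ref{conec3}, the interlacing $r_i\ge w>r_{i+1}$ on the one hand, and on the other hand that the $k$-connected chain descending from $r_0$ and the one descending from $r_1$ coincide from $w$ downward while differing, above $w$, by precisely the $\ell$ links carried by $q_1,\dots,q_{\ell+1}$. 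Reassembling the pieces (a closed interval $[r_0,r_i]_k$ plus an open one $(r_i,r_{\ell+1})_k$, etc.) and cancelling the common tail then yields $\varepsilon(r_0)-\varepsilon(r_1)=[r_0,r_1)_k=\ell$ in every configuration. Equivalently, one may first isolate the clean fact that $[r_0,r_1)_k=\ell$ --- the greedy $k$-connected chain from $r_0$ reaches $r_1$ after exactly $\ell$ steps --- and then plug it into the interval arithmetic.

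The delicate point, on which the write-up would concentrate, is precisely this endpoint bookkeeping: there are genuinely distinct configurations according to whether $r_0\ge w$, whether $r_1\ge w$, whether $w$ equals one of the $q_j$, and whether $w$ lies strictly below $q_{\ell+1}$, and in each one the closed/open conventions for $[\,\cdot\,,\cdot)_k$ versus $(\,\cdot\,,\cdot]_k$ must be tracked so that the telescoping produces $\ell$ rather than $\ell\pm1$. Since this is the same technical nuisance already disposed of in Lemma~\ref{conec3} (and implicitly in Lemma~\ref{conec33}), I would organise the argument so as to reuse that computation, spelling out only the handful of configurations that behave differently here.
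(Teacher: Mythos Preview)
Your proposal is correct and follows essentially the same route as the paper: reduce via Lemma~\ref{conec33} to the geometric identity $\varepsilon(r_0)-\varepsilon(r_1)=\ell$, then establish it by the interlacing argument of Lemmas~\ref{conec1}--\ref{conec2} in the generic case where $w$ sits between $r_0$ and $r_1$ (the paper's $\bar r_1$ and $\bar r_{\ell+1}$). The one point the paper makes explicit that you only allude to is that the rows $q_i+1$ themselves carry addable corners in $T_n$ and form a $k$-connected chain there---this is precisely your ``clean fact'' $[r_0,r_1)_k=\ell$, and it is worth stating up front rather than leaving it inside the interval arithmetic.
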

\begin{proof}
Let $T_n$ be the shape of $\lambda$.
Let also
$r_1, \dots,r_{\ell+1}$ be the rows where the letter $n$ occurs in $T_n$,
and define $\bar r_i=r_i+1$ for $i=1,\dots,\ell+1$.
Because of the presence of the letter $n$ in rows
$r_1, \dots,r_{\ell+1}$, the $k$-shape $\lambda$ has
addable corners in rows
 $\bar r_1, \dots,\bar r_{\ell+1}$.  Furthermore, since
by definition of $c_n$ the rows  $r_1, \dots,r_{\ell+1}$
form a sequence of contiguously connected rows, we get that
$\bar r_1, \dots,\bar r_{\ell+1}$ are $k$-connected rows below
row $\bar r_1$.   Observe also that
$\bar r_1$ is the row above that of $n^\uparrow$.

Now, let  $s_1$ be the row of $\lambda$ where   $n_+^\uparrow$ lies,
and let  $s_1,\dots,s_{\ell-i+1}$ be the sequence of $k$-connected rows weakly
below row $s_1$.
We will again
only treat the case where $s_1$ is between $\bar r_1$ and $\bar r_{\ell+1}$.
Define $r_i$ to be the lowest among $r_1,\dots,r_{\ell+1}$ such that $r_i\geq s_1$.
Then from Lemmas~\ref{conec1} and \ref{conec2}, we have
$$
r_i \geq s_1 >  r_{i+1} \geq s_2 > \cdots  \geq s_{\ell-i} > r_{\ell+1} \geq
s_{\ell-i+1}
$$
which is equivalent to
$$
\bar r_i > s_1 \geq  \bar r_{i+1} > s_2 \geq \cdots  > s_{\ell-i} \geq \bar r_{\ell+1} >
s_{\ell-i+1}
$$
Observe that  $(s_1,\bar r_{\ell+1}]_k=(\bar r_{i+1},\bar r_{\ell+1}]_k$.

We can now finalize the proof of the lemma.
We have $\charge'(n+1) = \charge'(n) - (s_1,\bar r_{\ell+1}]_k$ and
$\charge(n+1) = \charge(n) + [\bar r_1,s_1)_k$. Thus
$\charge(n+1) - \charge'(n+1) = \charge(n)-\charge'(n)+[\bar r_1,s_1)_k+(s_1,\bar r_{\ell+1}]_k$.
But $[\bar r_1,s_1)_k=[\bar r_1,\bar r_i]_k$ and,
as we have seen, $(s_1,\bar r_{\ell+1}]_k= (\bar r_{i+1},\bar r_{\ell+1}]_k$.  This implies from Lemma~\ref{conec33}
that $\charge(n+1) - \charge'(n+1) = \charge(n)-\charge'(n)+[\bar r_1,\bar r_i]_k + (\bar r_{i+1},\bar r_{\ell+1}]_k = -\ell+\ell=0$.
\end{proof}

The charge and cocharge of the letter $n$ are related in the following way:
\begin{lemma} \label{lemmechargecochargen}
Let $c_{n}$ be the cover corresponding to letter $n$ in
the $k$-shape tableau $T$.  Then the charge and cocharge of the letter $n$ satisfy the relation
\begin{equation} \label{eqchargecocharge}
 \charge(n) = n-\cocharge(n) - |c_{n}|
\end{equation}
\end{lemma}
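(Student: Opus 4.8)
The plan is to prove \eqref{eqchargecocharge} by induction on $n$. The base case $n=1$ is immediate: a cover out of the empty partition is forced to consist of a single cell, so $|c_1|=1$, while $\charge(1)=\cocharge(1)=0$, and indeed $1-0-1=0$.

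For the inductive step, set $X_n=\charge(n)-\charge(n-1)$ and $Y_n=\cocharge(n)-\cocharge(n-1)$, the two increments produced by \eqref{defcharge2} and by the companion cocharge recursion. Using the inductive hypothesis $\charge(n-1)=(n-1)-\cocharge(n-1)-|c_{n-1}|$, the statement for $n$ is equivalent to the single identity
\begin{equation}
X_n+Y_n \;=\; 1-|c_n|+|c_{n-1}|.
\end{equation}
The difficulty is that $X_n$ is read off from the rows of $n_-^\uparrow$ and $n^\uparrow$ while $Y_n$ is read off from the rows of $n_-^\downarrow$ and $n^\downarrow$, so the two increments are not immediately comparable. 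To bridge this I would pass $X_n$ to its lowermost form: let $\tilde X_n$ be the value obtained by applying the increment rule \eqref{defcharge2} with $r$ the row above $n_-^\downarrow$ and $r'$ the row of $n^\downarrow$. Applying Lemma~\ref{conec4} with $n$ replaced by $n-1$ shows that using $n_-^\downarrow$ in place of $n_-^\uparrow$ leaves $\charge(n)$ unchanged; combined with Lemma~\ref{conec33} for $n-1$ (which raises $\charge(n-1)$ by $|c_{n-1}|-1$), this forces the corresponding increment to be $X_n-(|c_{n-1}|-1)$, and then Lemma~\ref{conec33} for $n$ adds $|c_n|-1$ upon replacing $n^\uparrow$ by $n^\downarrow$. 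Hence $\tilde X_n=X_n+|c_n|-|c_{n-1}|$, and the identity above reduces to $\tilde X_n+Y_n=1$. (One must check that these lemmas still apply after the reference cells of the earlier letters have been moved to their lowermost occurrences; this is fine because the combinatorial identities underlying their proofs involve only the $k$-shape $T_{n-1}$ and the position of the row used, not how that row was selected.)

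The remaining claim $\tilde X_n+Y_n=1$ is now symmetric, since $\tilde X_n$ and $Y_n$ are computed from the very same pair of rows: with $r$ the row above $n_-^\downarrow$ and $r'$ the row of $n^\downarrow$, one has $\tilde X_n=[r,r')_k$ or $-(r',r]_k$ according as $r\ge r'$ or $r<r'$, while $Y_n=-(r,r')_k$ or $[r',r]_k$ according as $r>r'$ or $r\le r'$. I would finish with the evident case check: if $r>r'$ then $\tilde X_n+Y_n=[r,r')_k-(r,r')_k=1$, because the two intervals differ only in whether the starting row is counted; if $r<r'$ the same comparison for $[r',r]_k$ versus $(r',r]_k$ again gives $1$; and if $r=r'$ then $\tilde X_n=0$ by the convention noted after \eqref{defcharge2}, while $Y_n=[r',r]_k=1$. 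In all cases $\tilde X_n+Y_n=1$, which closes the induction.

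The step I expect to be the main obstacle is the normalization $\tilde X_n=X_n+|c_n|-|c_{n-1}|$: it requires invoking Lemmas~\ref{conec33} and \ref{conec4} in a configuration where several reference cells have already been shifted, and keeping careful bookkeeping of which of $|c_n|-1$ and $|c_{n-1}|-1$ enters with which sign. By contrast the closing identity $\tilde X_n+Y_n=1$ is routine, although the asymmetric treatment of the case $r=r'$ in the charge and cocharge conventions has to be dealt with explicitly.
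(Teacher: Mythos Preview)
Your argument is correct and uses the same toolkit as the paper---induction on $n$, reduction to an increment identity, and the shift lemmas~\ref{conec33} and~\ref{conec4}---but you deploy those tools more efficiently. The paper first establishes the special configuration where all of $n_+^\uparrow,n_+^\downarrow$ lie above all of $n^\uparrow,n^\downarrow$ by a direct combinatorial argument about how two chains of $k$-connected rows interleave, and only then uses the shift lemmas (including Lemma~\ref{conec3}) to reduce the remaining configurations to that base case. You instead shift both reference cells $n_-^\uparrow\to n_-^\downarrow$ and $n^\uparrow\to n^\downarrow$ at once, so that the charge increment $\tilde X_n$ and the cocharge increment $Y_n$ are read off from the very same pair of rows; the identity then collapses to the trivial $[r,r')_k-(r,r')_k=1$ (and its variants), bypassing the interleaving argument entirely. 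Your caveat about applying Lemma~\ref{conec33} after Lemma~\ref{conec4} has already relocated $n_-^\uparrow$ is justified: the proof of Lemma~\ref{conec3} (and hence of~\ref{conec33}) shows the shift equals $|c_n|-1$ for any position of the adjacent reference row, so the order of the two normalizations does not matter.
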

\begin{proof}  We proceed by induction.  The case $n=1$ is easily seen to hold.
We need to establish that
\begin{equation}
 \charge(n+1) = n+1-\cocharge(n+1) - |c_{n+1}|
\end{equation}
Using the induction hypothesis, it suffices to prove that
\begin{equation}\label{nada1}
  \charge(n+1)- \charge(n)+\cocharge(n+1)- \cocharge(n) = |c_n|- |c_{n+1}|+1
\end{equation}
We first consider the case where
$n_+^\uparrow$ and $n_+^\downarrow $ are above
$n^\uparrow $ and $n^\downarrow $. We can visualize this case with a diagram.
The symbol $\star$ denotes the rows that
are  $k$-connected with the row of $n_+^\uparrow$, while
$\diamond$ denote the rows above the letters $n$.

\qquad \begin{picture}(200,80)(0,-10)

\put(10,20){\line(1,0){10}}  \put(10,20){\line(0,1){10}}
\put(50,20){\line(1,0){10}}  \put(50,20){\line(0,1){10}}
\put(100,20){\line(1,0){10}}  \put(100,20){\line(0,1){10}}
\put(140,20){\line(1,0){10}}  \put(140,20){\line(0,1){10}}
\put(180,20){\line(1,0){10}}  \put(180,20){\line(0,1){10}}
\put(240,20){\line(1,0){10}}  \put(240,20){\line(0,1){10}}
\put(330,20){\line(1,0){10}}  \put(330,20){\line(0,1){10}}

\put(12,22){$\star$}
\put(52,22){$\star$}
\put(102,22){$\star$}
\put(142,22){$\star$}
\put(182,22){$\star$}
\put(242,22){$\star$}
\put(332,22){$\star$}

\put(200,20){\line(1,0){20}} \put(210,10){\line(1,0){10}}
 \put(210,10){\line(0,1){10}}  \put(220,10){\line(0,1){10}}
\put(212,12){\small $n$}  \put(222,13){$\uparrow$}

\put(260,20){\line(1,0){20}} \put(270,10){\line(1,0){10}}
 \put(270,10){\line(0,1){10}}  \put(280,10){\line(0,1){10}}
  \put(272,12){\small $n$}

\put(350,20){\line(1,0){30}} \put(370,10){\line(1,0){10}}
 \put(370,10){\line(0,1){10}}  \put(380,10){\line(0,1){10}}
\put(372,12){\small $n$} \put(382,13){$\downarrow$}

\put(212,22){$\diamond$}
\put(272,22){$\diamond$}
\put(372,22){$\diamond$}

\put(70,22){$\cdots$} \put(300,22){$\cdots$}

\put(2,50){\line(1,0){20}} \put(2,50){\line(0,1){10}}
\put(2,60){\line(1,0){20}}  \put(22,50){\line(0,1){10}} \put(24,53){$\uparrow$}                  
\put(15,47){\vector(0,-1){20}} \put(3,52){\small $n$+1}

\put(92,50){\line(1,0){20}} \put(92,50){\line(0,1){10}}
\put(92,60){\line(1,0){20}} \put(112,50){\line(0,1){10}} \put(114,53){$\downarrow$}                  
\put(105,47){\vector(0,-1){20}} \put(93,52){\small $n$+1}

\end{picture}

\noindent Let $r_1, \dots,r_{\ell}$ be the rows of the $\diamond$'s in the picture ($r_1$ and $r_{\ell}$ are the rows of the $\diamond$ above $n^\uparrow$ and  $n^\downarrow$ respectively).
Let also
$s_1$ be the row of the lowest $\star$ such that $s_1 \geq r_1$.
By Lemma~\ref{conec2} the sequence $s_1,\dots,s_{\ell+1}$ of connected row starting from $s_1$ is such
that
$$
s_1 \geq r_1 > s_2 \geq r_2 > \cdots > s_{\ell} \geq r_{\ell} > s_{\ell+1}
$$
We then get
$$( n_+^\uparrow, n^\uparrow ]_k +  ( n^\uparrow \, , \, n^\downarrow ]_k \quad = \, \quad  (  n_+^\uparrow \, ,\, n_+^\downarrow  )_k +  [ n_+^\downarrow\, ,\,  n^\downarrow  ]_k $$
where we identified the cells with their rows.
Therefore, the definition of charge and cocharge imply
\begin{equation}
\charge(n) - \charge(n+1) + |c_n|-1 = |c_{n+1}| -2+\cocharge(n+1) - \cocharge(n)
\end{equation}
which is equivalent to \eqref{nada1}.

We have established that the result holds
when
$n_+^\uparrow$ and $n_+^\downarrow $ are above
$n^\uparrow $ and $n^\downarrow $.   In all the other cases we have either:
\begin{enumerate}
\item[{\bf I)}] $n_+^\uparrow$ or
 $n_+^\downarrow$
 lies between
$n^\uparrow$ and $n^\downarrow $
\item[{\bf II)}] $n^\downarrow$ or
$n^\uparrow$
lies between $n_+^\uparrow $ and $n_+^\downarrow $
\end{enumerate}

Consider case  {\bf I)}.
 From Lemmas~\ref{conec33} and \ref{conec4}, if
we hypothetically shift the position of  $n^\uparrow $ to that
of $n^\downarrow $, we obtain  new $\ch'(n)$
and $\ch'(n+1)$ such that
$\charge'(n)-\charge(n)=|c_{n}|-1$ and  $\ch'(n+1)=\ch(n+1)$.  From
\begin{equation}\label{nadanada}
  \charge'(n+1)- \charge'(n)+\cocharge'(n+1)- \cocharge'(n) = |c_n'|- |c_{n+1}'|+1
\end{equation}
we get using the extra relations $|c_{n}'|=1$, $|c_{n+1}'|=|c_{n+1}|$,
$\cocharge'(n+1)=\cocharge(n+1)$ and
$\cocharge'(n)=\cocharge(n)$, that \eqref{nada1} holds if \eqref{nadanada}
holds.  Note that shifting  $n^\uparrow $ changes its position
relative to either $n_+^\uparrow$ or $n_+^\downarrow$.

Consider case  {\bf II)}.
 From Lemma~\ref{conec3}, if
we hypothetically shift the position of  $n_+^\downarrow $ to that
of $n_+^\uparrow $, we obtain  a new $\cocharge'(n+1)$ such that
$\cocharge'(n+1)-\cocharge(n+1)=|c_{n+1}|-1$.
Using \eqref{nadanada} with  $|c_n'|=|c_n|$,
$|c_{n+1}'|=1$,
$\cocharge'(n)=\cocharge(n)$,
$\charge'(n)=\charge(n)$, and $\charge'(n+1)=\charge(n+1)$, we obtain again that
\eqref{nada1} holds if \eqref{nadanada}
holds.   Observe that shifting  $n_+^\downarrow $ changes its position
relative to  either $n^\uparrow$ or
 $n^\downarrow$.

Applying cases  {\bf I)} and  {\bf II)} again and again,
 we get that the general
case follows from the previously established case
 where $n_+^\uparrow$ and $n_+^\downarrow $ are above
$n^\uparrow $ and $n^\downarrow $.
 \end{proof}

An immediate consequence of Lemma~\ref{lemmechargecochargen} is the
following relation between the charge and the cocharge of a standard
$k$-shape tableau, which generalizes the usual relation between charge and
cocharge of a standard tableau.
\begin{proposition} \label{propchargecocharge}
Let $T$ be a standard $k$-shape tableau of
shape $\lambda$.  Then
\begin{equation}
 \charge(T) = \frac{n(n-1)}{2}-\cocharge(T) - |\Int^k(\lambda)|
\end{equation}
where we recall that $\Int^k(\lambda)$ was defined at the beginning
of Subsection~\ref{SS:kshapes}.
\end{proposition}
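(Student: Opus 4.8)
The plan is to derive the proposition by telescoping the letter-by-letter identity of Lemma~\ref{lemmechargecochargen} over all the letters of $T$. Write $N$ for the number of letters of $T$ (this is the integer denoted $n$ in the statement). Summing \eqref{eqchargecocharge} over $n=1,\dots,N$ and recalling that $\charge(T)=\sum_{n=1}^{N}\charge(n)$ and $\cocharge(T)=\sum_{n=1}^{N}\cocharge(n)$ yields
\begin{equation}
\charge(T)=\sum_{n=1}^{N} n-\cocharge(T)-\sum_{n=1}^{N}|c_n|=\frac{N(N+1)}{2}-\cocharge(T)-\sum_{n=1}^{N}|c_n|,
\end{equation}
so the whole matter reduces to computing $\sum_{n=1}^{N}|c_n|$.

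The second step is to show that $\sum_{n=1}^{N}|c_n|=|\Int^k(\lambda)|+N$. On one hand the covers $c_n=\lambda^{(n)}/\lambda^{(n-1)}$, regarded as skew shapes, partition the cells of $\lambda=\lambda^{(N)}$, so $\sum_{n=1}^{N}|c_n|=|\lambda|=|\Int^k(\lambda)|+|\bdy^k(\lambda)|$ directly from the definitions of the $k$-interior and the $k$-boundary. On the other hand, adding a cover $c=\mu/\lambda$ raises the size of the $k$-boundary by exactly one, so that a cover sends a $k$-shape in $\Ksh^k_M$ to one in $\Ksh^k_{M+1}$; this is established in \cite{LLMS2} and is precisely why covers (rather than moves, which preserve $|\bdy^k|$) are the steps used to pass from $k$-cores up to $(k+1)$-cores. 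Since $\lambda^{(0)}=\emptyset$ has empty $k$-boundary, an immediate induction gives $|\bdy^k(\lambda^{(i)})|=i$ for all $i$, hence $|\bdy^k(\lambda)|=N$ and $\sum_{n=1}^{N}|c_n|=|\Int^k(\lambda)|+N$.

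Combining the two, $\charge(T)=\frac{N(N+1)}{2}-\cocharge(T)-|\Int^k(\lambda)|-N=\frac{N(N-1)}{2}-\cocharge(T)-|\Int^k(\lambda)|$, which is the claimed identity with $n=N$. I do not expect a genuine obstacle here: all of the substance sits in Lemma~\ref{lemmechargecochargen}, whose inductive proof reduces an arbitrary relative position of $n^\uparrow,n^\downarrow,n_+^\uparrow,n_+^\downarrow$ to the base configuration by means of Lemmas~\ref{conec3}, \ref{conec33} and \ref{conec4}; the present statement is merely the sum of that identity together with the elementary bookkeeping $|\lambda|=|\Int^k(\lambda)|+|\bdy^k(\lambda)|$ and the boundary-increment property of covers. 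The one point I would be careful to confirm is that last property, namely that \emph{every} cover --- not only the maximal or reverse-maximal ones --- increases $|\bdy^k|$ by exactly $1$; granting it, the proposition is a genuine corollary.
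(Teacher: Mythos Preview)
Your proof is correct and essentially identical to the paper's: both sum Lemma~\ref{lemmechargecochargen} over $n=1,\dots,N$ and then reduce to the identity $\sum_n |c_n|=|\Int^k(\lambda)|+N$. The paper phrases the latter as ``adding the cover $c_i$ increases by $|c_i|-1$ the number of hooks larger than $k$,'' which is exactly your boundary-increment property of covers (indeed immediate from the cover-type string diagram in Figure~\ref{F:stringdiags}, where $\ell$ cells enter and $\ell-1$ cells leave $\bdy^k$), so your caution on that point is unnecessary.
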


\begin{proof} Summing  \eqref{eqchargecocharge} from 1 to $n$, we get
\begin{equation}
 \charge(T) = \frac{n(n+1)}{2}-\cocharge(T) - (|c_1| +\cdots+|c_n|)
\end{equation}
Adding the cover $c_i$ increases by $|c_i|-1$
the number of hooks larger
than $k$. Hence  $|c_1| +\cdots+|c_n|-n=\Int^k(\lambda)$, and the corollary follows.
\end{proof}
\begin{example}
Using the standard $k$-shape tableau $T$ in Examples~\ref{exa36} and \ref{exa37}, we get
\[ \charge(T) = \frac{9(9-1)}{2} - \cocharge(T) - |\Int^{k}(\lambda_9)|
= 36-15-5=16 \]
as wanted.
\end{example}

\section{Compatibility between (co)charge and the weak bijection}
\label{Scompatibility}

Our goal is to show that the weak bijection \eqref{bij1} satisfies the conditions
\begin{equation}
{\cocharge}_k(Q^{(k)})= {\cocharge}(\bp) + {\cocharge}_{k-1}(Q^{(k-1)}) \qquad
{\rm and} \qquad {\charge}_k(Q^{(k)})= {\charge}(\bp) + {\charge}_{k-1}(Q^{(k-1)})
\end{equation}
when $Q^{(k)}$ (resp. $Q^{(k-1)})$ is  standard
$k$-tableau (resp. $k-1$-tableau).  We have emphasized that the (co)charge of $Q^{(k)}$ and $Q^{(k-1)}$ are computed
considering that they are sequences of $k$ and $k-1$-shapes respectively.  The next proposition shows that this distinction is not necessary.
\begin{proposition} \label{propkkmas}
The cocharge (resp. charge) of a standard $k$-tableau $T$
is the same whether it is
considered as a sequence of $k$-shapes or as a sequence of
$k+1$-shapes.  That is,
$$\cocharge_k(T) = \cocharge_{k+1}(T) \qquad {\rm and} \qquad
\charge_k(T) = \charge_{k+1}(T)
$$
if $T$ is a standard $k$-tableau.
\end{proposition}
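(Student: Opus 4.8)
The plan is to reduce everything to a short combinatorial fact about $(k+1)$-cores. First I would note that a $(k+1)$-core $\lambda$ has no cell of hook length exactly $k+1$, so that $\Int^k(\lambda)=\Int^{k+1}(\lambda)$, whence $\bdy^k(\lambda)=\bdy^{k+1}(\lambda)$ and $\rs^k(\lambda)=\rs^{k+1}(\lambda)$, $\cs^k(\lambda)=\cs^{k+1}(\lambda)$; in particular $\lambda$ is a $(k+1)$-shape as well as a $k$-shape. Since the cells of a cover $c_n$ in a standard $k$-tableau all carry one and the same $(k+1)$-residue, two consecutive cells of $c_n$ lie at diagonal distance exactly $k+1\in\{k+1,k+2\}$; this, together with the equality of boundaries (which makes the modified rows and columns of $c_n$ the same whether computed at level $k$ or at level $k+1$), shows that each $c_n$ is also a cover at level $k+1$. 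Hence the sequence $\emptyset=\lambda^{(0)}\subseteq\dots\subseteq\lambda^{(n)}$ carried by $T$ is at once a standard $k$-shape tableau and a standard $(k+1)$-shape tableau, with literally identical covers.

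Granting this, both $\charge_k(T)$ and $\charge_{k+1}(T)$ are produced by the single recursion \eqref{defcharge2}, the only possible discrepancy being whether the rows $r,r'$ occurring in it (which do not themselves depend on the level) are declared connected at level $k$ or at level $k+1$ --- a question decided, for the letter $n$, on the $(k+1)$-core $\shape(T_{n-1})$. As $|\Int^k(\lambda)|=|\Int^{k+1}(\lambda)|$, Proposition~\ref{propchargecocharge} makes the charge assertion and the cocharge assertion equivalent, so it is enough to treat charge; and for that it is enough to prove: \emph{for every $(k+1)$-core $\mu$ and every row $r$ of $\mu$ carrying an addable corner, the $k$-connected row below $r$ coincides with the $(k+1)$-connected row below $r$.} This at once yields $[r,r')_k=[r,r')_{k+1}$ and $(r',r]_k=(r',r]_{k+1}$ for all the rows appearing in \eqref{defcharge2}, hence the same charge increment at every letter, hence $\charge_k(T)=\charge_{k+1}(T)$.

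To prove that, let $r'$ be the $k$-connected row below $r$, let $d$ be the diagonal of the addable corner of $r$, and put $e:=d\bmod(k+1)$. By Lemma~\ref{lemresidueconnect}, $r'$ carries the residue $e$ on the ground, and since $r'$ lies below $r$ all of its ground cells sit on diagonals exceeding $d$; hence the residue-$e$ ground cell of $r'$ is on the diagonal $d+k+1$. Now the $(k+1)$-connected row $r''$ below $r$ satisfies $r''\le r'$ (the weaker distance bound can only reach further down), so it suffices to exclude $r''<r'$. If $r''<r'$, then $r''$ must have an addable corner at diagonal exactly $d+k+2$, with no addable corner strictly between rows $r''$ and $r'$; the rows $r'',\dots,r'-1$ therefore all have one common length $L$, and reading off the addable corner of $r''$ gives $L-r''=d+k+1$, so the rightmost ground cell of $r'$ lies on the diagonal $\mu_{r'-1}-r'=L-r'\le d+k$ --- contradicting the previous sentence. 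Hence $r''=r'$. The cocharge statement now follows from Proposition~\ref{propchargecocharge}, and the proposition is proved.

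I expect the main friction to lie in making the reduction of the first two paragraphs airtight: checking carefully that each cover of the $k$-tableau really is a cover-type string for the parameter $k+1$ as well (consecutive cells at distance exactly $k+1$, and the modified rows and columns genuinely unchanged because the $k$- and $(k+1)$-boundaries of every $\lambda^{(i)}$ agree), and, in the last paragraph, in justifying cleanly from Lemma~\ref{lemresidueconnect} the precise location (diagonal $d+k+1$) of the residue-$e$ ground cell of $r'$ and the parallel treatment for cocharge via rows $n^\downarrow$ rather than $n^\uparrow$.
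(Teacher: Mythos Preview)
Your proposal is correct and follows essentially the same route as the paper: reduce the question to the equality of $k$-connectedness and $(k+1)$-connectedness on $(k+1)$-cores, after which the recursion \eqref{defcharge2} gives identical increments at every letter. The paper's proof is a one-liner that simply asserts this equivalence as an immediate consequence of the absence of hooks of length $k+1$; you instead route the argument through Lemma~\ref{lemresidueconnect}, which works but is more effort than needed. A direct hook computation is shorter: if the $(k+1)$-connected row $r''$ were strictly below the $k$-connected row $r'$, then with $c$ the column of the addable corner of $r$ one checks that the cell $(r'',c)$ has arm $\lambda_{r''}-c=r''-r+k+1$ and leg $\lambda'_c-r''=r-1-r''$, hence hook exactly $k+1$, contradicting that each $\lambda^{(i)}$ is a $(k+1)$-core. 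Your preliminary verification that each $c_n$ is also a cover at level $k+1$ is correct and worth keeping, since it is what makes $\charge_{k+1}(T)$ well-defined in the first place (the paper leaves this implicit).
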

\begin{proof}
Since a $k+1$-core does not have hooks of length $k+1$, two rows of a
$k+1$-core are $k+1$-connected if and only if they are $k$-connected.
In the case of a $k$-tableau, the corresponding sequence of $k$-shapes
is a sequence of $k+1$-cores.  Hence the result follows immediately from
the definition of cocharge (resp. charge).
\end{proof}

It thus suffices to show that the compatibility between the (co)charge
and the weak bijection (in the standard case) is true  when
the $k-1$-tableau is considered as a sequence of $k$-shapes.
The remainder of this section will be devoted to showing the next proposition,
which from Proposition~\ref{propkkmas} implies Theorem~\ref{theo}.
\begin{proposition} \label{propoprinci}
 Let $Q^{(k)}$ and $Q^{(k-1)}$ be standard $k$ and $k-1$-tableaux respectively such that $Q^{(k)} \longleftrightarrow (Q^{(k-1)},[\bp])$ in the
weak bijection \eqref{bij1}.  Then
\begin{equation}\label{cond111}
{\cocharge}(Q^{(k)})= {\cocharge}(\bp) + {\cocharge}(Q^{(k-1)}) \qquad
{\rm and} \qquad {\charge}(Q^{(k)})= {\charge}(\bp) + {\charge}(Q^{(k-1)})
\end{equation}
where the (co)charge is computed considering that both $Q^{(k)}$ and $Q^{(k-1)}$
are sequences of $k$-shapes.
\end{proposition}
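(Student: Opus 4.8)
The plan is to prove the cocharge half of \eqref{cond111} and then deduce the charge half for free. Both $Q^{(k)}$ and $Q^{(k-1)}$ are standard $k$-shape tableaux on the same number $n$ of letters, of shapes $\lambda$ and $\mu$ respectively (a $k{+}1$-core and a $k$-core, each in $\Ksh^k$), so Proposition~\ref{propchargecocharge} applies to each. Moreover $\charge(m)+\cocharge(m)$ equals the number of cells of $m$ for every move $m$, and $|\bdy^k|$ is constant along the poset of $k$-shapes, so $\charge(\bp)+\cocharge(\bp)=|\mu|-|\lambda|=|\Int^k(\mu)|-|\Int^k(\lambda)|$ for the path $\bp$. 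Substituting these into $\charge(Q^{(k)})=\tfrac{n(n-1)}2-\cocharge(Q^{(k)})-|\Int^k(\lambda)|$ and into the analogous identity for $Q^{(k-1)}$ shows that $\charge(Q^{(k)})=\charge(\bp)+\charge(Q^{(k-1)})$ is equivalent to $\cocharge(Q^{(k)})=\cocharge(\bp)+\cocharge(Q^{(k-1)})$. (The same reduction can be performed one square at a time, using Lemma~\ref{lemmechargecochargen} together with the commutativity of each square in \eqref{ktabdiag}.)

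For the cocharge identity I would induct on $n$, following the step-by-step construction of the weak bijection in \eqref{ktabdiag}: the $i$-th square is an instance of the basic bijection \eqref{bijecstand}, sending $(c_i,[\bp_{i-1}])$ to $(\tilde c_i,[\bp_i])$ through the pushout algorithm, with $c_i$ a reverse-maximal cover of $Q^{(k)}$ and $\tilde c_i$ a maximal cover of $Q^{(k-1)}$ (Proposition~\ref{propktab}). Truncating $Q^{(k)}$, $Q^{(k-1)}$ and $\bp$ after $i$ letters is compatible with the weak bijection, so the plan is to carry through the induction the equality of truncated totals $\cocharge(Q^{(k)}_{\le i})=\cocharge(\bp_i)+\cocharge(Q^{(k-1)}_{\le i})$ together with the finer per-step relation
\[
\cocharge_{Q^{(k)}}(i)-\cocharge_{Q^{(k-1)}}(i)=\cocharge(\bp_i)-\cocharge(\bp_{i-1}),
\]
the latter being what actually propagates the induction: since $\cocharge(i)$ is a cumulative quantity, equality of totals alone does not suffice.

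The technical heart is then to control how a single run of the pushout algorithm changes cocharge and to match that change against $\cocharge_{Q^{(k)}}(i)-\cocharge_{Q^{(k-1)}}(i)$. On the tableau side, $\cocharge_{Q^{(k)}}(i)$ is governed, through its recursive definition, by the rows of the bottom cells of $c_{i-1},c_i$ and the $k$-connectedness of $\lambda^{(i-1)}$ (and likewise by $\tilde c_{i-1},\tilde c_i$ in $\mu^{(i-1)}$); here one uses that the cover attached to a letter of a standard $k$-tableau occupies a contiguously connected run of rows whose length is its number of cells, together with Lemmas~\ref{conec1}--\ref{conec4}. On the path side, maximization below attaches a row move and maximization above a column move (the latter of cocharge $0$), while each move $m_j$ of $\bp_{i-1}$ is replaced by its maximal pushout $\tilde m_j$; one records $\cocharge(\tilde m_j)-\cocharge(m_j)$ in each of the four row-move types (types II--IV change it, by the length of the adjoined completion resp.\ of the translated intersection $c\cap m$) and in the four column-move types (no change, as they remain column moves of cocharge $0$). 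Maintaining the invariant that the cocharge freshly built into $\bp_i$, minus that of the portion of $\bp_{i-1}$ already consumed, equals the change in the cocharge contribution of the current maximal cover relative to its initial configuration, the termination of the algorithm yields precisely the finer relation above, and hence the inductive step.

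The main obstacle I anticipate is exactly this case analysis, in particular pushout types III and IV and their column-move transposes, where $\tilde m$ is obtained from $m$ by deleting $c\cap m$ and possibly re-gluing a translate of it: one must recompute the rank and length of $\tilde m$, the new position and row structure of $\tilde c$, and then verify that the induced changes in path-cocharge and in the cover's cocharge contribution cancel exactly. A secondary, largely bookkeeping, difficulty is that a maximal pushout can destroy maximality of the cover and force a re-maximization, so the algorithm is an interleaving of maximizations and pushouts; one has to check that the invariant survives each atomic step in whatever order it occurs, and to confirm the base cases $i=1,2$ directly. I expect no conceptual surprise beyond this: the remaining work is a finite, if lengthy, tally of cells, ranks, and $k$-connected rows across the possible configurations, after which Proposition~\ref{propkkmas} upgrades the result to Theorem~\ref{theo}.
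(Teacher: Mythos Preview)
Your outline is sound and would work, but the paper organizes the induction differently in a way that lightens exactly the case analysis you flag as the main obstacle.

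Your plan is to prove the cocharge identity directly along the diagram \eqref{ktabdiag}, tracking the running invariant $\cocharge_{Q^{(k)}}(i)-\cocharge_{Q^{(k-1)}}(i)=\cocharge(\bp_i)-\cocharge(\bp_{i-1})$ through every atomic step, and then to deduce the charge identity globally via Proposition~\ref{propchargecocharge}. That forces you, on every row-move pushout (types II--IV) and on every maximization below, to compute a nonzero $\cocharge(\tilde m)-\cocharge(m)$ and match it exactly against the induced change in the letter's cocharge.

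The paper instead factors the passage $Q^{(k)}\to Q^{(k-1)}$ through a chain of intermediate $k$-shape tableaux $Q^{(k)}=T^{(0)},T^{(1)},\dots,T^{(r)}=Q^{(k-1)}$ in which each consecutive pair $T^{(j)},T^{(j+1)}$ differs by a \emph{single} move $m_i$ at every level $i$, all of the same type (all row, or all column). On a row leg it proves the per-letter relation $\charge(N)=\charge(\bar N)$ (so \eqref{relcharge} with right-hand side $0$); on a column leg, $\cocharge(N)=\cocharge(\bar N)$ (so \eqref{relcocharge} with right-hand side $0$). A short per-letter lemma, using Lemma~\ref{lemmechargecochargen} together with the cell-count identity $|\tilde c_N|-|c_N|=|m_N|-|m_{N-1}|$ valid in every pushout type, shows these two statements are equivalent. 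Thus the paper never has to balance a nonzero path contribution against a tableau-side change: in each atomic step it only checks that the relevant statistic is \emph{unchanged}. This is what makes Lemmas~\ref{lemmarowpush} and \ref{lemmacolpush} short. Your global reduction of charge to cocharge is the integrated form of this per-letter equivalence; pushing it down to the letter level and alternating between charge and cocharge according to the move type is the simplification you are missing.
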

In order to show Proposition~\ref{propoprinci},
we will proceed locally
using the pushout algorithm.
We say that the standard $k$-shape tableaux
$T$ and $U$  differ by moves if there exists a sequence of row moves (resp. column moves)
$m_1,\dots,m_N$ such that the following diagram commutes:
\begin{equation} \label{ktabdiag2}
\begin{diagram}
\node{\emptyset} \arrow{s,l}{c_1}
 \arrow{e,t}{\emptyset} \node{\emptyset} \arrow{s,r}{\tilde c_1}\\
\node{\lambda^{(1)}} \arrow{s,l}{c_2}
 \arrow{e,t}{m_1} \node{\mu^{(1)}} \arrow{s,r}{\tilde c_2}\\
\node{\lambda^{(2)}} \arrow{s,l,..}{}
 \arrow{e,t}{m_2} \node{\mu^{(2)}} \arrow{s,r,..}{}\\
\node{\lambda^{(N-1)}} \arrow{s,l}{c_{N}}
 \arrow{e,t}{m_{N-1}} \node{\mu^{(N-1)}} \arrow{s,r}{\tilde c_{N}}\\
\node{\lambda} \arrow{e,t}{m_N}
\node{\mu}
\end{diagram}
\end{equation}
where $c_1,\dots,c_N$ and $\tilde c_1,\dots, \tilde c_N$ correspond respectively
to $T$ and $U$, and where every commutative square in the diagram corresponds
to one of the 3 steps described in Section~\ref{SecPush}.  The pushout algorithm ensures that one can obtain a sequence
\begin{equation}
Q^{(k)}=T^{(0)}, T^{(1)}, \dots ,T^{(r-1)}, T^{(r)}=Q^{(k-1)}
\end{equation}
where, for all $i$, $T^{(i)}$ and $T^{(i+1)}$ either differ by row moves or column moves.  Therefore, in order to prove Proposition~\ref{propoprinci}, it suffices to show that $T$ and $U$ described in
\eqref{ktabdiag2} are such that
\begin{equation}
{\cocharge}(T)= {\cocharge}(m_N) + {\cocharge}(U)
\qquad {\rm and} \qquad
{\charge}(T)= {\charge}(m_N) + {\charge}(U)
\end{equation}
Using the definition of charge and cocharge, is is straightforward to see that
this is equivalent to proving that
\begin{equation} \label{relcocharge}
\cocharge(N)-\cocharge(\bar N)= {\cocharge}(m_N) - {\cocharge}(m_{N-1})
\end{equation}
and
\begin{equation} \label{relcharge}
\charge(N)-\charge(\bar N)
= {\charge}(m_N) - {\charge}(m_{N-1})
\end{equation}
where for simplicity we denote the charge of the letter $N$ in
$T$ by $\charge(N)$, and
the charge of the letter $N$ in
$U$ by $\charge(\bar N)$ (and similarly for
cocharge).

The next lemma shows that both problems are equivalent.
\begin{lemma} We have
\begin{equation}
\cocharge(N)-\cocharge(\bar N)= {\cocharge}(m_N) - {\cocharge}(m_{N-1})
\iff
\charge(N)-\charge(\bar N)= {\charge}(m_N) - {\charge}(m_{N-1})
\end{equation}
\end{lemma}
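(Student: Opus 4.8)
The plan is to derive the equivalence as a purely formal consequence of Lemma~\ref{lemmechargecochargen}, applied to both $k$-shape tableaux $T$ and $U$, together with the bookkeeping of cell counts in the bottom square of diagram~\eqref{ktabdiag2}. No geometry beyond these inputs will be needed.

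First I would record, using Lemma~\ref{lemmechargecochargen} for the letter $N$, that $\charge(N) = N - \cocharge(N) - |c_N|$ in $T$ and $\charge(\bar N) = N - \cocharge(\bar N) - |\tilde c_N|$ in $U$, where $c_N$ and $\tilde c_N$ are the covers of letter $N$ in $T$ and $U$ respectively. Subtracting gives
\[ \charge(N) - \charge(\bar N) = -\bigl(\cocharge(N) - \cocharge(\bar N)\bigr) - \bigl(|c_N| - |\tilde c_N|\bigr). \]
Next I would account for $|c_N| - |\tilde c_N|$: the bottom square of \eqref{ktabdiag2} commutes, since it realises one of the three steps of the pushout algorithm of Section~\ref{SecPush}, so as sets of cells of $\mu/\lambda^{(N-1)}$ we have $c_N \cup m_N = m_{N-1} \cup \tilde c_N$, both unions being disjoint; comparing cardinalities yields $|c_N| - |\tilde c_N| = |m_{N-1}| - |m_N|$.

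The last ingredient is the elementary observation --- immediate from Definition~\ref{D:charge} and the definition of the cocharge of a move --- that $\charge(m) + \cocharge(m) = |m|$ for every move $m$, since for a row (resp.\ column) move of rank $r$ and length $\ell$ this common value is $0 + r\ell$ (resp.\ $r\ell + 0$), and $r\ell$ is exactly the number of cells of $m$. Substituting $|m_{N-1}| = \charge(m_{N-1}) + \cocharge(m_{N-1})$ and $|m_N| = \charge(m_N) + \cocharge(m_N)$ into the displayed identity and rearranging, one obtains the single relation
\[ \bigl(\charge(N) - \charge(\bar N)\bigr) - \bigl(\charge(m_N) - \charge(m_{N-1})\bigr) = -\Bigl(\bigl(\cocharge(N) - \cocharge(\bar N)\bigr) - \bigl(\cocharge(m_N) - \cocharge(m_{N-1})\bigr)\Bigr), \]
whose two sides vanish simultaneously --- which is precisely the asserted equivalence between \eqref{relcocharge} and \eqref{relcharge}.

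I do not anticipate a real obstacle here; the argument is a one-line algebraic manipulation once the two inputs are assembled. The only points worth double-checking are that $T$ and $U$ are genuine $k$-shape tableaux with the same number $N$ of letters, so that Lemma~\ref{lemmechargecochargen} applies verbatim to each, and that the commuting square in \eqref{ktabdiag2} really does have $c_N \cup m_N$ and $m_{N-1} \cup \tilde c_N$ as its two descriptions of the set of cells of $\mu/\lambda^{(N-1)}$, as recorded for each of Steps~1--3 in Section~\ref{SecPush}. Because the relation $\charge(m) + \cocharge(m) = |m|$ and the formula of Lemma~\ref{lemmechargecochargen} are symmetric under interchanging charge and cocharge (the term $N - |c_N|$ being common to both), the two directions of the equivalence require no separate treatment.
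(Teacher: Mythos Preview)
Your proof is correct and follows the same core strategy as the paper: apply Lemma~\ref{lemmechargecochargen} to both $T$ and $U$, then use the cell-count identity $|c_N|-|\tilde c_N|=|m_{N-1}|-|m_N|$ coming from the commuting bottom square. Your presentation is in fact slightly cleaner than the paper's: the paper splits into the row-move and column-move cases and verifies $|\tilde c_N|-|c_N|=|m_N|-|m_{N-1}|$ ``by inspection'' of Steps~1 and~3, whereas you obtain that identity uniformly from the disjoint decompositions $c_N\sqcup m_N=\mu/\lambda^{(N-1)}=m_{N-1}\sqcup\tilde c_N$, and you avoid the case split entirely by invoking $\charge(m)+\cocharge(m)=|m|$. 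This buys a one-line symmetric argument in place of two parallel checks, at no extra cost.
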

\begin{proof}  Suppose that $T$ and $U$ differ by
a row move.  We have to show that
\begin{equation} \label{aprouver1}
\cocharge(N)-\cocharge(\bar N)= |m_N| - |m_{N-1}|
\iff
\charge(N)=\charge(\bar N)
\end{equation}
>From  Lemma~\ref{lemmechargecochargen}, we get
\begin{equation}
\cocharge(N)-\cocharge(\bar N)=(N-\charge(N)-|c_N|)-(N-\charge(\bar N)-|\tilde c_N|)=
\charge(\bar N)+|\tilde c_N|-\charge(N)-|c_N|
\end{equation}
which leads to
\begin{equation} \label{eqnnp}
\cocharge(N)-\cocharge(\bar N)= |m_N| - |m_{N-1}| \iff
\charge(\bar N)+|\tilde c_N|-\charge(N)-|c_N|=  |m_N| - |m_{N-1}|
\end{equation}
By inspection of Step 1 and Step 3 (in the row case), it is easy to deduce
that in all cases
\begin{equation}
 |\tilde c_{N}|- |c_N| =
 |m_N| - |m_{N-1}|
\end{equation}
and thus \eqref{aprouver1} follows from \eqref{eqnnp}.
The proof when $T$ and $U$ differ by a column move
is identical.
\end{proof}
It thus suffices to prove \eqref{relcocharge} when $T$ and $U$ differ by a
column move, and \eqref{relcharge} when $T$ and $U$ differ by a row move.
Observe that in both cases the right hand side of the equations is then equal to zero.
We will proceed by induction.  We will suppose that the relations hold for all
$N$ up to $N=n$ and then show that the case $N=n+1$ also holds, that is, that
\begin{enumerate}
\item[$\bullet$] $\cocharge(n+1)=\cocharge(\bar n+1)$ when  $T$ and $U$ differ by a
column move
\item[$\bullet$]
$\charge(n+1)=\charge(\bar n+1)$
when $T$ and $U$ differ by a
row move
\end{enumerate}
The proof will rely on commutative diagrams of the type
\begin{eqnarray}
\begin{CD}
T_{n} @ > {m_n } >> U_{n}   \\
@VV{c_{n+1}}V @VV{\tilde{c}_{n+1}}V
\\
T_{n+1} @ > {m_{n+1}} >>  U_{n+1}
\end{CD}
\end{eqnarray}
 where $T_n$ and $U_n$ denote standard $k$-shape tableaux of $n$ letters,
and where, for simplicity, we use $T_{n+1}$ instead of ${\rm sh}(T_{n+1})$.
We will also keep denoting
by $\bar n$ the letter $n$ in $U$.  For instance $\bar n^\uparrow$
denotes the highest occurrence of the letter $n$ in $U$, while
$n^\uparrow$
denotes the highest occurrence of the letter $n$ in $T$.

We now proceed to analyze all the possible cases.

\subsection{Row and column maximization}
We first consider the situation where $c_{n+1}$ is not maximal.
The maximization described in Step 1 and 2 is such that
\begin{eqnarray}
\begin{CD}
T_{n} @ > {\emptyset } >> T_{n}   \\
@VV{c_{n+1}}V @VV{\tilde{c}_{n+1}}V
\\
T_{n+1} @ > {m } >>  U_{n+1}
\end{CD}
\end{eqnarray}
where $m$ is a maximization below or above of the cover $c_{n+1}$.
Suppose that $m$ is a row move (maximization below).  We have that $n_+^\uparrow$ and $\bar n_+^\uparrow$ lie in the same position (above the move),
as do obviously $n^\uparrow$ and $\bar n^\uparrow$ (given that $T_n=U_n$).
Since $\charge(n+1)$ and $\charge(\bar n+1)$ are computed using the $k$-shape
corresponding to the shape of $T_n$, it is immediate that
$\charge(n+1)-\charge(n)=\charge(\bar n+1)-\charge(\bar n)=\charge(\bar n+1)-\charge(n)$.
Therefore, we get $\charge(n+1)=\charge(\bar n+1)$.

If $m$ is a column move, we can use a similar analysis (this
time $n_+^\downarrow$ and $\bar n_+^\downarrow$ lie in the same position below the move), to show that  $\cocharge(n+1)=\cocharge(\bar n+1)$.

\subsection{Maximal pushout (row case)}

We now show that the charge is conserved in the pushout of a maximal
cover $c_{n+1}$ and a row move $m$.
\begin{lemma} \label{lemmarowpush}
Consider the following situation
\begin{eqnarray}\label{diagcolum2bis2}
\begin{CD}
T_{n-1} @ > {m'} >> U_{n-1}   \\
@VV{c_n}V @VV{\tilde{c}_{n}}V
\\
T_{n} @ > {{m}} >>  U_{n} \\
@VV{c_{n+1}}V @VV{\tilde{c}_{n+1}}V
\\
T_{n+1} @ > {\tilde{m}} >>  U_{n+1} \\
\end{CD}
\end{eqnarray}
where $c_{n+1}$ is maximal, and where either
$c_n$ is maximal or $m$ is
is the maximization below of $c_n$ (in which case $m'$ is empty).
Then  $\charge(n+1)=\charge(\bar n+1)$
if $\charge(n)=\charge(\bar n)$.
\end{lemma}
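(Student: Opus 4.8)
The plan is to reduce the assertion to a comparison of the charge \emph{increment} of the letter $n+1$ in $T$ and in $U$. By \eqref{defcharge2}, the quantity $\charge(n+1)-\charge(n)$ is completely determined by three data: the row $r$ lying immediately above the row of $n^{\uparrow}$ (the topmost cell of the cover $c_{n}$), the row $r'$ of $(n{+}1)^{\uparrow}$ (the topmost cell of the cover $c_{n+1}$), and the $k$-shape of $T_{n}$, through which the quantity $[r,r')_{k}$ or $(r',r]_{k}$ is computed. Likewise $\charge(\bar n+1)-\charge(\bar n)$ is determined by the row $\bar r$ above the top of $\tilde c_{n}$, the row $\bar r'$ of the top of $\tilde c_{n+1}$, and the $k$-shape of $U_{n}$. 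Since $\charge(n)=\charge(\bar n)$ is assumed, it suffices to prove that these two increments coincide; note that the moves appearing in \eqref{relcharge} for this step are row moves, so this is exactly \eqref{relcharge} in the case at hand.

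First I would check that $\bar r=r$ and $\bar r'=r'$, that is, that the topmost cell of the cover carrying the letter $n$ (resp.\ the letter $n+1$) is literally the same cell in $T$ and in $U$. For the letter $n$ this comes from the top square of \eqref{diagcolum2bis2}: by hypothesis this square is either the maximization below of $c_{n}$ (Step~1), in which case $\tilde c_{n}=c_{n}\cup m$ with the new string attached below $c_{n}$, or the maximal pushout of $c_{n}$ against $m'$ (Step~3). In the pushout case, in types~I and~II the topmost cell of $c_{n}$ is visibly untouched, and in types~III and~IV the intersection $c_{n}\cap m'$ lies strictly below the top of $c_{n}$ (because $c_{n}$ continues above $m'$), so deleting it, or translating it one row upward, cannot reach the top cell. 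Applying the same reasoning to the bottom square, with the maximal cover $c_{n+1}$ pushed against the row move $m$, gives $\bar r'=r'$.

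It then remains to show that $[r,r')_{k}$ and $(r',r]_{k}$ take the same value whether computed in the $k$-shape of $T_{n}$ or in that of $U_{n}$. Since $m$ is a row move, the shape of $U_{n}$ is obtained from that of $T_{n}$ by translating a block of consecutive rows to the right by $\mathrm{rank}(m)$ columns and leaving every other row fixed; such a translation preserves the diagonal distance between any two addable corners both lying in the moved block, hence preserves $k$-connectedness and contiguous connectedness inside that block. The real point is to see that the chain of $k$-connected rows running between the row of $n^{\uparrow}$ and the row of $(n{+}1)^{\uparrow}$ --- whose length is precisely the increment in question --- is not disrupted by this translation: either the moved block is disjoint from the rows strictly between $r$ and $r'$, or, when it meets them, Lemmas~\ref{conec1} and~\ref{conec2} control exactly how the chain enters and leaves the block and force its length to be unchanged.

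This last step is where the genuine work lies, and it is the main obstacle. It requires a case analysis over the four possible types~I--IV of the row pushout of $(c_{n+1},m)$, and, inside each, over the position of $c_{n+1}$ relative both to the move $m$ and to the cover $c_{n}$; the most delicate configuration is the one in which $c_{n+1}$ sits between $c_{n}$ and $m$, so that the $k$-connected chain whose length gives the charge increment genuinely passes through the block of rows displaced by $m$, and one must argue via Lemmas~\ref{conec1} and~\ref{conec2} that the entrance into and exit from that block compensate exactly. Once the two increments are seen to agree, the equality $\charge(n+1)=\charge(\bar n+1)$ follows.
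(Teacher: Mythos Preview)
Your overall architecture matches the paper's: reduce to equality of increments, note that the top cells of $c_n$ and $c_{n+1}$ are unchanged (the paper records this as observations (i) and (ii)), and then argue that the relevant chain of $k$-connected rows has the same length in $T_n$ and in $U_n$. Where you diverge is in the execution of this last step, and there the gap is real.

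First, your geometric description of a row move is not right: a row move of length $\ell>1$ and rank $r$ does \emph{not} translate a single block of consecutive rows by $r$ columns. The strings of the move touch several non-adjacent groups of rows (contiguous cells are $k$ or $k{+}1$ diagonals apart), and the effect on $k$-connectedness is more delicate than a rigid shift. Second, Lemmas~\ref{conec1} and~\ref{conec2} compare two chains of $k$-connected rows inside a \emph{single} $k$-shape; they say nothing about how a chain transforms when the ambient shape changes from $T_n$ to $U_n$, so they are not the tool you need here. The paper instead works out directly, in three explicit cases, how the $k$-connected row below a given row changes when a row move is applied (roughly: if a row $r_i$ in the chain belongs to the move it gets replaced by $r_i+1$, and the chain length is preserved). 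What then makes the argument close is a pair of observations you did not isolate: neither the row of $n^{\uparrow}$ nor the row of $n_+^{\uparrow}$ can belong to the move $m$ (the paper's observations (iii) and (iv)). These are exactly what rule out the one boundary situation in which replacing some $r_i$ by $r_i+1$ could alter the count $[r,r')_k$ or $(r',r]_k$. Without them, the ``entrance and exit compensate'' heuristic you sketch cannot be made precise.
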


Before proceeding to the proof of the lemma, we need to establish a few elementary results.
Let $m$ be a row move from  $\lambda$ to $\mu$ originating, as in Lemma~\ref{lemmarowpush}, either from a maximization below of $c_{n}$ or from the maximal
pushout of  the pair $(c_n,m')$.   The following observations follow easily
from the definition
of maximal pushout in the row case.

(i) $n^\uparrow$ and $\bar n^\uparrow$ are in the same position

(ii)  $n_+^\uparrow$ and $\bar n_+^\uparrow$ are in the same position

(iii) $n^\uparrow$ is never in a row that belongs to the move $m$.

(iv)  $n_+^\uparrow$ is never in a row that belongs to the move $m$.

We also need to describe how the move $m$ affects $k$-connectedness
between rows.
We will always consider that
$r_1$ and $r_2$ ($r_1 >r_2$) are two $k$-connected rows in $\lambda$.  There are essentially 3 cases to consider.

\begin{enumerate}
\item  If the move $m$ intersects row $r_2$ but does not continue
above, then as seen in the picture, the row that corresponds to the negatively
modified column of $m$ is now $k$-connected to row $r_2+1$, while
the other rows remain connected in the same way:

\setlength{\unitlength}{0.25mm}
    \begin{picture}(430,100)(0,0)

 \put(12,82){$\star$}
 \put(92,12){$\star$}
 \put(42,62){$\diamond$}
 \put(112,12){$\diamond$}

 \put(272,82){$\star$}
 \put(352,22){$\star$}
 \put(302,62){$\diamond$}
 \put(382,12){$\diamond$}

 \put(10,10){\line(1,0){130}}  \put(10,10){\line(0,1){70}}
 \put(10,20){\line(1,0){80}}  \put(40,10){\line(0,1){70}}           
 \put(40,60){\line(1,0){20}}   \put(60,10){\line(0,1){50}}
 \put(10,80){\line(1,0){30}} \put(90,10){\line(0,1){10}}

\put(100,85){$r'_1$}
\put(130,65){$r_1$}
\put(160,15){$r_2$}

\put(95,88){\vector(-1,0){20}}
\put(125,68){\vector(-1,0){20}}
\put(155,18){\vector(-1,0){20}}

\put(200,45){\vector(1,0){20}}
\put(205,50){\small{$m_r$}}

 \put(300,10){\line(1,0){130}}  \put(270,20){\line(0,1){80}}
 \put(270,20){\line(1,0){110}}  \put(300,10){\line(0,1){70}}           
 \put(300,60){\line(1,0){20}}   \put(320,10){\line(0,1){50}}
 \put(270,80){\line(1,0){30}} \put(350,10){\line(0,1){10}}
 \put(380,10){\line(0,1){10}}

\put(352,11){\small $\bullet$}
\put(362,11){\small $\bullet$}
\put(372,11){\small $\bullet$}

\put(272,11){\small$\circ$}
\put(282,11){\small$\circ$}
\put(292,11){\small$\circ$}

\put(360,85){$r'_1$}
\put(390,65){$r_1$}
\put(450,15){$r_2$}

\put(355,88){\vector(-1,0){20}}
\put(385,65){\vector(-1,0){20}}
\put(445,15){\vector(-1,0){20}}

\end{picture}

\item  If both rows $r_1$ and $r_2$ belong to $m$, then rows $r_1+1$ and
$r_2+1$ are $k$-connected in $\mu$, while row $r_1$ and $r_2$ remain connected
(if there is still an addable corner in  row $r_1$ of $\mu$)

\setlength{\unitlength}{0.25mm}
    \begin{picture}(350,90)(0,0)

 \put(12,62){$\star$}
 \put(72,12){$\star$}
 \put(252,72){$\star$}
 \put(312,22){$\star$}

 \put(42,62){$\diamond$}
 \put(102,12){$\diamond$}
 \put(282,62){$\diamond$}
 \put(342,12){$\diamond$}

\put(10,10){\line(1,0){110}}  \put(10,10){\line(0,1){60}}
\put(10,20){\line(1,0){60}}  \put(40,10){\line(0,1){50}}             
\put(10,60){\line(1,0){40}}  \put(70,10){\line(0,1){10}}
\put(50,10){\line(0,1){50}}

\put(110,65){$r_1$}
\put(140,15){$r_2$}

\put(105,68){\vector(-1,0){20}}
\put(135,18){\vector(-1,0){20}}

\put(180,45){\vector(1,0){20}}
\put(185,50){\small{$m_r$}}

\put(280,10){\line(1,0){80}}  \put(250,20){\line(0,1){50}}
\put(250,70){\line(1,0){30}}  \put(280,60){\line(0,1){10}}
\put(250,20){\line(1,0){90}}  \put(280,10){\line(0,1){50}}             
\put(250,60){\line(1,0){40}}  \put(340,10){\line(0,1){10}}
\put(310,10){\line(0,1){10}}
\put(290,10){\line(0,1){50}}

\put(312,11){\small $\bullet$}
\put(322,11){\small $\bullet$}
\put(332,11){\small $\bullet$}

\put(252,61){\small$\bullet$}
\put(262,61){\small$\bullet$}
\put(272,61){\small$\bullet$}

\put(252,11){\small$\circ$}
\put(262,11){\small$\circ$}
\put(272,11){\small$\circ$}

\put(370,75){$r_1+1$}
\put(400,25){$r_2+1$}

\put(365,78){\vector(-1,0){40}}
\put(395,28){\vector(-1,0){20}}

\end{picture}

\item  If row $r_1$ belongs to the move $m$ but $r_2$ does not then
$r_1+1$ now connects with $r_2$ in $\mu$, while row $r_1$ and $r_2$ remain connected (if there is still an addable corner in  row $r_1$ of $\mu$)

\setlength{\unitlength}{0.25mm}
    \begin{picture}(350,90)(0,0)

 \put(12,62){$\star$}
 \put(72,12){$\star$}
 \put(252,72){$\star$}
 \put(312,12){$\star$}

 \put(42,62){$\diamond$}
 \put(102,12){$\diamond$}
 \put(282,62){$\diamond$}
 \put(342,12){$\diamond$}

\put(10,10){\line(1,0){110}}  \put(10,10){\line(0,1){60}}
\put(10,20){\line(1,0){60}}  \put(40,10){\line(0,1){50}}             
\put(10,60){\line(1,0){40}}  \put(70,10){\line(0,1){10}}
\put(50,10){\line(0,1){50}}

\put(110,65){$r_1$}
\put(140,15){$r_2$}

\put(105,68){\vector(-1,0){20}}
\put(135,18){\vector(-1,0){20}}

\put(180,45){\vector(1,0){20}}
\put(185,50){\small{$m_r$}}

\put(250,10){\line(1,0){110}}  \put(250,10){\line(0,1){60}}
\put(250,70){\line(1,0){30}}  \put(280,60){\line(0,1){10}}
\put(250,20){\line(1,0){60}}  \put(280,10){\line(0,1){50}}             
\put(250,60){\line(1,0){40}} \put(310,10){\line(0,1){10}}
\put(290,10){\line(0,1){50}}

\put(252,61){\small$\bullet$}
\put(262,61){\small$\bullet$}
\put(272,61){\small$\bullet$}

\put(390,65){$r_1$}
\put(360,75){$r_1$+1}
\put(400,15){$r_2$}

\put(385,68){\vector(-1,0){40}}
\put(355,78){\vector(-1,0){40}}
\put(395,18){\vector(-1,0){20}}

\end{picture}

\end{enumerate}

We are now in a position to prove Lemma~\ref{lemmarowpush}.
\begin{proof}[Proof of Lemma~\ref{lemmarowpush}]
 Suppose that
$n^\uparrow$ is above $n_+^\uparrow$.
Let $r_1$
be the row above that of $n^\uparrow$ in $\lambda$, and let
$r_1,\dots,r_{\ell+1}$ be the connected rows below row $r_1$ such
that $r_{\ell} >s$ and $r_{\ell+1} \leq s$, where $s$ is the row of
$n_+^\uparrow$ in $\lambda$.  From observations (i) and (ii),  in $\mu$ row $r_1$ is still
the row above that of $\npup$ and $s$ is still the row of
$\bar n_+^\uparrow$.  From the analysis of the
3 cases considered above, in $\mu$
the connected rows below
$r_1$ will be $r_1,\dots,r_{i-1},r_i+1,\dots,r_{j}+1,r_{j+1},\dots,r_{\ell+1}$
if rows $r_i$ up to $r_j$ belong to the move $m$. Note that $i>1$ since,
as we just mentioned,
$r_1$ did not change.   We have to show that
$[r_1,s)_k$ is the same in $\lambda$ and in $\mu$.  In $\lambda$, we
have $[r_1,s)_k=\ell$.  In $\mu$ we will still have  $[r_1,s)_k=\ell$ unless
$r_{\ell+1}=s$ and $s$ is a row of the move.
But this is impossible from (iv).

Now suppose that
$\nup$ is weakly below $\nplusup$.
Let $r_1$
be the row of $\nplusup$ in $\lambda$, and let
$r_1,\dots,r_{\ell+1}$ be the connected rows below row $r_1$ such
that $r_{\ell} \geq s$ and $r_{\ell+1} < s$, where $s$ is the row above that
of $\nup$ in $\lambda$.   From observation (i) and (ii), in $\mu$ row $r_1$ is still
the row of $\npluspup$ and $s$ is still the row above that of
$\npup$.   In $\mu$, the connected rows below
$r_1$ will again be $r_1,\dots,r_{i-1},r_i+1,\dots,r_{j}+1,r_{j+1},
\dots,r_{\ell+1}$, with $i>1$,
if rows $r_i$ up to $r_j$ belong to the move $m$.  We have to show that
$(r_1,s]_k$ is the same in $\lambda$ and in $\mu$.  In $\lambda$, we
have $(r_1,s]_k=\ell-1$.  In $\mu$ we will still have  $(r_1,s]_k=\ell-1$ unless
$r_{\ell+1}=s-1$ and $s-1$ is a row of the move.
But this is impossible by (iii) since $s-1$ is the row of $\nup$.
\end{proof}

\subsection{Maximal pushout (column case)}
We need to show that the cocharge is conserved in the pushout of a maximal
cover $c_{n+1}$ and a column move $m$.
\begin{lemma}\label{lemmacolpush}
 Consider the following situation
\begin{eqnarray}
\begin{CD}
T_{n-1} @ > {m'} >> U_{n-1}   \\
@VV{c_n}V @VV{\tilde{c}_{n}}V
\\
T_{n} @ > {{m}} >>  U_{n} \\
@VV{c_{n+1}}V @VV{\tilde{c}_{n+1}}V
\\
T_{n+1} @ > {\tilde{m}} >>  U_{n+1} \\
\end{CD}
\end{eqnarray}
where $c_{n+1}$ is maximal, and where either
$c_n$ is maximal or $m$ is
is the maximization above of $c_n$ (in which case $m'$ is empty).
Then  $\cocharge(n+1)=\cocharge(\bar n+1)$
if $\cocharge(n)=\cocharge(\bar n)$.
\end{lemma}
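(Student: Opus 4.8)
The plan is to transcribe the proof of Lemma~\ref{lemmarowpush}, interchanging throughout the roles of rows and columns and of $\charge$ and $\cocharge$ (and, correspondingly, interchanging maximization below with maximization above, so that Step~2 now plays the role Step~1 played there). First I would record the column-move analogues of observations (i)--(iv): in the diagram of the lemma, $n^\downarrow$ and $\bar n^\downarrow$ occupy the same position, $n_+^\downarrow$ and $\bar n_+^\downarrow$ occupy the same position, and neither $n^\downarrow$ nor $n_+^\downarrow$ ever lies in a column belonging to the column move $m$. These follow from the explicit description of the column-case maximal pushout given in Section~\ref{SecPush}, in the same way observations (i)--(iv) followed from the row-case description.

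Next I would establish the column-move analogue of the three-case analysis of how a move perturbs $k$-connectedness. Since $m$ is a column move we have $\cs(\mu)=\cs(\lambda)$, so the only effect of $m$ on $\lambda$ is to translate upward the $k$-boundary portions of certain columns, hence to push certain addable corners of $\lambda$ upward. For two $k$-connected rows $r_1>r_2$ of $\lambda$ I would split according to whether each of $r_1,r_2$ lies in the moved region, getting the three cases (neither moved, both moved, only $r_1$ moved); using Lemmas~\ref{conec1} and \ref{conec2} one then identifies, in each case, the new $k$-connected row below each surviving addable corner, the net outcome being that a run of $k$-connected rows starting from an unmoved row $r_1$ changes only by replacing a contiguous block $r_i,\dots,r_j$ of moved rows by $r_i{+}1,\dots,r_j{+}1$.

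With these ingredients the induction step runs as in Lemma~\ref{lemmarowpush}. Assuming $\cocharge(n)=\cocharge(\bar n)$, I would split according to the relative vertical position of $n^\downarrow$ and $n_+^\downarrow$, let $r_1$ be the relevant addable-corner row and $s$ the other relevant row (the row above $n^\downarrow$, or the row of $n_+^\downarrow$), expand $\cocharge(n+1)-\cocharge(n)$ as $\pm$ the length of a run of $k$-connected rows between $r_1$ and $s$, and use the perturbation analysis to see that this length is the same whether computed from $T_n$ or from $U_n$. The only way it could change is for the extreme row of the run to be a moved row coinciding with (or just above) one of $n^\downarrow$, $n_+^\downarrow$; the analogues of (iii)--(iv) exclude this, which gives $\cocharge(n+1)=\cocharge(\bar n+1)$ and completes the induction.

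The genuinely delicate point is the second paragraph: $\cocharge$ is defined through $k$-connectedness of \emph{rows}, whereas the perturbing move $m$ here is a \emph{column} move, so one must check that an upward shift of certain $k$-boundary columns disturbs the addable corners --- hence the inter-corner row-distances that govern $k$-connectedness --- in precisely the same tidy three-case pattern that a row move produces for row-distances. One might hope instead to deduce the lemma from Lemma~\ref{lemmarowpush} by transposing $T$ and $U$, since transposition swaps row moves with column moves and swaps Steps~1 and 2; but that reduction also requires the fact that transposing a standard $k$-shape tableau interchanges $\charge$ and $\cocharge$, and --- because both statistics are defined purely in terms of rows while transposition exchanges rows and columns --- this identity is itself a nontrivial row/column symmetry not contained in the material above, so a direct parallel argument seems preferable.
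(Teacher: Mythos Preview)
Your overall plan is sound, but the key step---the three-case analysis of how a column move perturbs $k$-connectedness of rows---does not come out as you predict. You claim that a column move replaces a contiguous block $r_i,\dots,r_j$ of $k$-connected rows by $r_i{+}1,\dots,r_j{+}1$, mirroring the row case. This is false: for a column move the chain of $k$-connected rows is actually \emph{unchanged}. The paper isolates this as a separate statement (Lemma~\ref{lemmacoladd}): if $\lambda$ and $\mu$ differ by a column move and row $r_1$ has an addable corner in both, then the $k$-connected row below $r_1$ is the same in $\lambda$ and in $\mu$. The proof is the three-case analysis you anticipate, but in each case the relevant hook length changes by $0$ or $\pm 1$ in a way that keeps the connection at $r_2$ rather than shifting it to $r_2{+}1$. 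The asymmetry with the row case is genuine: a row move slides addable corners horizontally, which directly changes the diagonal distances governing row $k$-connectedness, whereas a column move slides them vertically, and the hook computation shows this leaves those distances in the same $\{\le k+1\}$ range.

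Because of this, the paper's proof of Lemma~\ref{lemmacolpush} is shorter than its row analogue, not a parallel transcription: once Lemma~\ref{lemmacoladd} is in hand, one applies it repeatedly to see that the entire chain of $k$-connected rows below the relevant starting row is identical in $T_n$ and $U_n$, so the interval counts defining $\cocharge(n+1)-\cocharge(n)$ and $\cocharge(\bar n+1)-\cocharge(\bar n)$ are literally equal. No exclusion argument at the endpoints (your use of (iii)--(iv)) is needed. Relatedly, your versions of (iii)--(iv) should read ``row'' rather than ``column'' (a column move adds one cell to each of certain rows, and it is those rows one must track); but with the corrected perturbation lemma, only (i)--(ii) are actually used, to guarantee that the starting row of the chain has an addable corner in both shapes.
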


Before proceeding to the proof of the lemma, we need as in the previous subsection to establish a few elementary results.
Let $m$ be a row move from  $\lambda$ to $\mu$ originating, as in Lemma~\ref{lemmacolpush}, either from a maximization above of $c_{n}$ or from the maximal
pushout of  the pair $(c_n,m')$.   The following observations follow easily
from the definition
of maximal pushout in the column case.

(i) $\ndown$ and $\npdown$ are in the same position

(ii)  $\nplusdown$ and $\npluspdown$ are in the same position

(iii) $\ndown$ is never in a row that belongs to the move $m$.

(iv)  $\nplusdown$ is never in a row that belongs to the move $m$.

We also need to describe how the move $m$ affects $k$-connectedness
between rows.  The main claim is the following.
\begin{lemma}\label{lemmacoladd}
 Let $m$ be a column move from $\lambda$ to $\mu$.
Suppose that $\lambda$ and $\mu$ have an addable corner in row $r_1$.  Then
the $k$-connected row $r_2$ below row $r_1$ is the same in $\lambda$ and in $\mu$.
\end{lemma}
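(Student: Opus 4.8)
The plan is to exploit the feature that makes the column case cleaner than the row case: since $m=\mu/\lambda$ is built from column-type strings we have $\cs^k(\mu)=\cs^k(\lambda)$, so $\bdy^k(\mu)$ is obtained from $\bdy^k(\lambda)$ by pushing a block of consecutive columns upward while preserving their sizes and leaving the other columns untouched; equivalently, $m$ is the transpose of a row move. Consequently the cells of $\mu/\lambda$ sit at the tops of a contiguous range of columns, and — because the strings of $m$ are translates of one another — they are added by a single rigid vertical shift of that column block. I would first record this geometric picture together with its immediate consequences: a column move never deletes cells and changes a given row only by appending cells at its right end, so the only rows whose addable corner it can create, destroy, or displace are those in the shifted column block or just below it.

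Granting this, the argument I would give runs as follows. The hypothesis that $\mu$ still has an addable corner in row $r_1$ restricts how the shifted block can meet the columns through $r_1$'s corner, and I would show that, in consequence, the portion of the boundary lying weakly below row $r_1$ is modified by $m$ only by a rigid vertical translation of a contiguous block of columns by one fixed amount. Since such a translation preserves every relative diagonal gap between two addable corners both lying in the block (and leaves the remaining corners where they were), all distances between addable corners among rows $\leq r_1$ agree in $\lambda$ and in $\mu$, as does the set of rows $\leq r_1$ whose addable corner lies within distance $k+1$ of that of $r_1$. Hence the lowest such row — the $k$-connected row below $r_1$ — is the same in $\lambda$ and $\mu$, which is the assertion (the case where no row below $r_1$ carries an addable corner being vacuous).

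I would establish the displayed claim by a case analysis according to which of the rows of $r_1$'s and of $r_2$'s addable corners are met by the cells of $m$ (none, only $r_1$'s, both, or only $r_2$'s), paralleling the three-picture analysis preceding the proof of Lemma~\ref{lemmarowpush}. The case carrying all the content — and the main obstacle — is the one in which some cell of $m$ lies in row $r_1$: then $r_1$'s addable corner genuinely slides to the right while cells are simultaneously appended in rows below $r_1$, and one must use the translate structure of the strings of $m$, together with the fact that consecutive cells of a string sit at diagonal distance $k$ or $k+1$, to verify that the translation acting below row $r_1$ is coherent with the slide of $r_1$'s corner — so that no row is dragged into, nor pushed out of, the distance-$(k{+}1)$ window about $r_1$'s addable corner. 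That synchronization is the crux; the remaining verifications are routine, and once the lemma is available the proof of Lemma~\ref{lemmacolpush} proceeds along the same route as that of Lemma~\ref{lemmarowpush}.
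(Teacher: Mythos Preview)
Your proposed case analysis (according to which of $r_1$, $r_2$ is touched by the move) is indeed what the paper does, and it is the right skeleton.  But the conceptual layer you wrap around it is inaccurate, and if you tried to prove the intermediate claim you state, you would get stuck.

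Specifically, you assert that ``the portion of the boundary lying weakly below row $r_1$ is modified by $m$ only by a rigid vertical translation of a contiguous block of columns by one fixed amount'' and hence that ``all distances between addable corners among rows $\le r_1$ agree in $\lambda$ and in $\mu$.''  Neither statement is true in general.  The cells of a single column-type string sit at diagonal distance $k$ or $k{+}1$ from one another, so for strings of length $\ell>1$ the affected columns are not contiguous; and even when they are, different columns may receive different numbers of cells (a rank-$r$ move adds up to $r$ cells per column, not a uniform amount).  More to the point, the distance between the addable corners in rows $r_1$ and $r_2$ genuinely \emph{changes} in two of the paper's three cases: when exactly one of $r_1,r_2$ belongs to the move, the relevant hook length (which equals that distance minus one) increases or decreases by exactly $1$.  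What saves the day is not that the distance is preserved, but that the structure of a column move forces the \emph{initial} value to be small enough (respectively large enough) that the $\le k{+}1$ threshold is still respected after the shift.

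The paper's proof dispenses with the translation picture entirely.  It fixes the cell $b$ in row $r_2$ and the column of $r_1$'s addable corner, notes that $h_\lambda(b)$ controls $k$-connectedness, and checks in each of the three cases (neither, one, or both of $r_1,r_2$ in the move) how $h_b$ changes and why the necessary inequality survives.  That hook-length bookkeeping is the actual content; your ``synchronization'' intuition is pointing at it, but you should replace the global distance-preservation claim with the local inequality-tracking argument.
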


\begin{proof} We consider all possible cases and see that the result
always hold.
\begin{enumerate}
\item If neither  $r_1$ nor $r_2$  belong to the move $m$,
then the result is immediate.
\item If only one of $r_1$ and $r_2$ belong to $m$, we have either

    \setlength{\unitlength}{0.25mm}
    \begin{picture}(330,150)(0,-10)

 \put(10,30){\line(1,0){110}}  \put(10,30){\line(0,1){90}}
 \put(10,40){\line(1,0){60}}  \put(20,30){\line(0,1){60}}           
 \put(10,90){\line(1,0){30}}   \put(70,30){\line(0,1){30}}

\put(80,95){$r_1$}
\put(110,35){$r_2$}
\put(12,32){\tiny $b$}

\put(75,98){\vector(-1,0){20}}
\put(105,38){\vector(-1,0){20}}

\put(210,30){\line(1,0){110}}  \put(210,30){\line(0,1){90}}
 \put(210,40){\line(1,0){60}}  \put(220,30){\line(0,1){60}}           
 \put(210,90){\line(1,0){30}}   \put(270,30){\line(0,1){30}}

 \put(270,40){\line(1,0){10}}   \put(280,30){\line(0,1){20}}
 \put(270,50){\line(1,0){10}}

\put(280,95){$r_1$}
\put(330,35){$r_2$}
\put(212,32){\tiny$b'$}

\put(275,98){\vector(-1,0){20}}
\put(325,38){\vector(-1,0){20}}

\put(272,32){\tiny $\bullet$}
\put(272,42){\tiny $\bullet$}

\put(145,65){\vector(1,0){30}}
\put(155,70){\small{$m$}}

\put(12,92){$\star$}
\put(72,32){$\star$}
\put(212,92){$\star$}
\put(282,32){$\star$}

\put(10,0){\small $h_b(\lambda) \leq k-1$} \put(210,0){\small $h_{b'}(\mu) =
h_b(\lambda)+1 \leq k$}

\end{picture}

or

        \setlength{\unitlength}{0.25mm}
    \begin{picture}(330,150)(0,-10)

 \put(10,30){\line(1,0){110}}  \put(10,30){\line(0,1){90}}
 \put(10,40){\line(1,0){60}}  \put(20,30){\line(0,1){60}}           
 \put(10,90){\line(1,0){30}}   \put(70,30){\line(0,1){30}}

\put(80,95){$r_1$}
\put(110,35){$r_2$}
\put(12,32){\tiny $b$}

\put(212,92){\tiny $\bullet$}
\put(212,102){\tiny $\bullet$}

\put(212,32){\tiny $\circ$}
\put(212,42){\tiny $\circ$}

\put(75,98){\vector(-1,0){20}}
\put(105,38){\vector(-1,0){20}}

\put(220,30){\line(1,0){100}}  \put(210,50){\line(0,1){60}}
 \put(210,50){\line(1,0){10}}  \put(220,30){\line(0,1){80}}           
 \put(210,90){\line(1,0){30}}   \put(270,30){\line(0,1){30}}

\put(230,30){\line(0,1){60}}   \put(210,100){\line(1,0){10}}
\put(220,40){\line(1,0){50}}   \put(210,110){\line(1,0){10}}

\put(280,95){$r_1$}
\put(330,35){$r_2$}
\put(222,32){\tiny $b'$}

\put(275,98){\vector(-1,0){20}}
\put(325,38){\vector(-1,0){20}}

\put(145,65){\vector(1,0){30}}
\put(155,70){\small{$m$}}

\put(12,92){$\star$}
\put(72,32){$\star$}
\put(222,92){$\star$}
\put(272,32){$\star$}

\put(10,0){\small $h_b(\lambda) =k$} \put(210,0){\small $h_{b'}(\mu) = h_b(\lambda)-1 \leq k-1$}

\end{picture}

\item If $r_1$ and $r_2$ both belong to $m$

        \setlength{\unitlength}{0.25mm}
    \begin{picture}(330,150)(0,-10)

 \put(10,30){\line(1,0){110}}  \put(10,30){\line(0,1){90}}
 \put(10,40){\line(1,0){60}}  \put(20,30){\line(0,1){60}}           
 \put(10,90){\line(1,0){30}}   \put(70,30){\line(0,1){30}}

\put(80,95){$r_1$}
\put(110,35){$r_2$}
\put(12,32){\tiny $b$}

\put(212,32){\tiny $\circ$}
\put(212,42){\tiny $\circ$}

\put(75,98){\vector(-1,0){20}}
\put(105,38){\vector(-1,0){20}}

\put(220,30){\line(1,0){100}}  \put(210,50){\line(0,1){60}}
 \put(210,50){\line(1,0){60}}  \put(220,30){\line(0,1){80}}           
 \put(210,90){\line(1,0){30}}   \put(270,30){\line(0,1){30}}

 \put(270,40){\line(1,0){10}}   \put(280,30){\line(0,1){20}}
 \put(270,50){\line(1,0){10}}

\put(230,30){\line(0,1){60}}   \put(210,100){\line(1,0){10}}
\put(220,40){\line(1,0){50}}   \put(210,110){\line(1,0){10}}

\put(280,95){$r_1$}
\put(330,35){$r_2$}
\put(222,32){\tiny$b'$}

\put(275,98){\vector(-1,0){20}}
\put(325,38){\vector(-1,0){20}}

\put(145,65){\vector(1,0){30}}
\put(155,70){\small{$m$}}

\put(12,92){$\star$}
\put(72,32){$\star$}
\put(222,92){$\star$}
\put(282,32){$\star$}

\put(212,92){\tiny $\bullet$}
\put(212,102){\tiny $\bullet$}

\put(272,32){\tiny $\bullet$}
\put(272,42){\tiny $\bullet$}

\put(10,0){\small $h_b(\lambda) =k$ or $k-1$} \put(210,0){\small $h_{b'}(\mu) = h_b(\lambda)=k$ or $k-1$}
\end{picture}
\end{enumerate}

\end{proof}

We now proceed to the proof of Lemma~\ref{lemmacolpush}.

\begin{proof}[Proof of Lemma~\ref{lemmacolpush}]
 Suppose that
$\ndown$ is above $\nplusdown$.    By (i) and (ii),
we have that $\ndown$ coincides with $\npdown$ and that
$\nplusdown$ coincides with $\npluspdown$.  The row $r$
above that of $\ndown$ still has an addable corner in $\mu$
from (i). Using Lemma~\ref{lemmacoladd} again and again we get that
the string of $k$-connected rows below $r$ is the same in $\lambda$ and $\mu$.
It is thus immediate that $\cocharge(\bar n+1)=\cocharge(n+1)$ if
$\cocharge(\bar n)=\cocharge(n)$.

Suppose that
$\ndown$ is above $\nplusdown$.    By (i) and (ii),
we still have that $\ndown$ coincides with $\npdown$ and that
$\nplusdown$ coincides with $\npluspdown$.
The row $r$ of $\nplusdown$ thus has an addable corner in $\lambda$ and
$\mu$ by definition. Using Lemma~\ref{lemmacoladd} again and again we get that
the string of $k$-connected rows below $r$ is the same in $\lambda$ and $\mu$.
It is then again  immediate that $\cocharge(\bar n+1)=\cocharge(n+1)$ if
$\cocharge(\bar n)=\cocharge(n)$.

\end{proof}

\section{Conclusion}

The compatibility between charge and the weak bijection was only
established in the standard case.  We discuss here briefly the
obstruction to extending this compatibility to
the semi-standard case.  In order to extend  the charge of a $k$-tableau of dominant weight given at the end of Section~\ref{Schargektab}
to arbitrary $k$-shape tableaux of dominant weight,
we would need a way to order the various occurrences of the letters in
the tableau (such as is done when computing the charge).  Finding this order is essentially equivalent to defining
a Lascoux-Sch\"utzenberger-type action
of the symmetric group on $k$-shape tableaux \cite{LSplaxique,LLT}
that would
extend that on $k$-tableaux defined at the end of Section~\ref{Schargektab}.
Unfortunately, we have been able to define such an action
only on maximal (and reverse-maximal) tableaux (see \cite{MEtesis}),
which does  not appear
sufficient to prove the compatiblity with the weak bijection.
One of the reasons why the extension to the non-maximal case is
non-trivial is that the number of $k$-shape tableaux of a given shape
and weight depends in general on the weight (the number does not
depend on the weight only after a notion of equivalence
classes on $k$-shape tableaux of a given weight and shape has been
defined).

Our ultimate goal would
be to show that our Lascoux-Sch\"utzenberger-type action
of the symmetric group on $k$-tableaux
commutes with the weak bijection.  To be more precise,
recall that the  Lascoux-Sch\"utzenberger action of the symmetric group on
words associates to every given permutation an operator $\sigma$ that permutes
the weight of a word (or tableau) according to the permutation.
We conjecture that the weak bijection is such that (using the language
of \eqref{bijn})
$$T \longleftrightarrow (T^{(k)},[\bp]) \quad \iff \quad
\sigma(T) \longleftrightarrow (\sigma(T^{(k)}),[\bp])$$
where we still denote by $\sigma$ the corresponding operator in the
Lascoux-Sch\"utzenberger action of the symmetric group on $k$-tableaux.
That is, the pushout algorithm appears to commute with the action of $\sigma$ (observe that $[\bp]$ is left unchanged):
\begin{eqnarray}\label{diag4}
\begin{CD}
T @ > {} >>  \sigma(T) \\
@VV{[\bp]}V @VV{[\bp]}V
\\
T^{(k)} @ > {} >>  \sigma(T^{(k)}) \\
\end{CD}
\end{eqnarray}
Apart from providing us a tool to demonstrate the compatibility of the
charge and the weak bijection in the non-standard case, proving the commutativity \eqref{diag4}
would imply that the pushout is only necessary in the standard case (which is technically much simpler than in the non-standard case).  In effect, a natural standardization  ${\rm Std}$ follows from the Lascoux-Sch\"utzenberger
action of the symmetric group on words.  The standardization
${\rm Std}$ would then immediately  commute with the pushout:
$$T \longleftrightarrow (T^{(k)},[\bp]) \quad \iff \quad
{\rm Std}(T)
 \longleftrightarrow ({\rm Std}(T^{(k)}),[\bp])$$
Since the standardization has
a left inverse ${\rm Std}^{-1}$,
to obtain the non-standard pushout $T \longleftrightarrow (T^{(k)},[\bp])$, one would simply need to compute  the standard pushout $U={\rm Std}(T)
 \longleftrightarrow (U^{(k)},[\bp])$ and then get $T^{(k)}$ from the relation
$T^{(k)} ={\rm Std}^{-1}\circ U^{(k)}$.

\end{document}